\title{Sections and Chapters}
\title{\bf$C^{*}$- properties of vector-valued Banach algebras}
\author{Maryam Aghakoochaki  $^1$, \thanks{2020 Mathematics Subject Classifcation. Primary: 46J05; Secondary: 46J10} ,  \and Ali Rejali $^2$, \thanks{Corresponding author}}
\date{
	$^1$Isfahan University  \\ \texttt{mkoochaki@sci.ui.ac.ir, Orcid: 0000-0002-3851-6550}\\%
	$^2$Department of Pure Mathematics, Faculty of Mathematics and Statistics, University of Isfahan, Isfahan 81746-73441, Iran \\ \texttt{rejali@sci.ui.ac.ir, Orcid: 0000-0001-7270-665X}\\[2ex]%
\today
}
\newcommand{\tnorm}[1]{{\left\vert\kern-0.25ex\left\vert\kern-0.25ex\left\vert #1 
    \right\vert\kern-0.25ex\right\vert\kern-0.25ex\right\vert}}
\theoremstyle{plain}
\newtheorem{thm}{Theorem}[section]
\newtheorem{pro}[thm]{Proposition}
\newtheorem{cor}[thm]{Corollary}
\newtheorem{lem}[thm]{Lemma}
\theoremstyle{definition}
\newtheorem{ex}[thm]{\bf Example}
\newtheorem{DEF}[thm]{\bf Definition}
\begin{document}
\maketitle

\begin{abstract}
		Let $X$ be a locally compact Hausdorff space,  and $A$ be a commutative semisimple Banach algebra over the scalar field $\mathbb{C}$.
The correlation between different types of  $\textup{BSE}$- Banach algebras $A$, and the Banach algebra $C_{0}(X, A)$  are assessed. It is found and approved that $C_{0}(X, A)$ is a $C^{*}$- algebra if and only if $A$ is so.
Furthermore, 
 $C_{b}(X, A)= C_{0}(X, A)$ if and only if  $X$ is compact.
		
\noindent\textbf{Keywords:} Banach algebra,  $\textup{BSE}$-  algebra,   $\textup{BED}$- algebra, $C^{*}$- algebra.
\end{abstract}

\section{Introduction}
 In 1990, Takahasi and Hatori  in \cite{E6}  introduced and assessed the notion of $\textup{BSE}$- algebras. Next, several authors have studied this concept for various kinds of Banach algebras; see \cite{AK}, \cite{AK2}, \cite{SB}, \cite{N2}, \cite{K1}. 
For more details see \cite{DU}, \cite{DU2}. In 2007, Inoue and Takahasi in \cite{F1} invented a new method for specifying the image of Gelfand representation of a commutative Banach algebra, in terms of quasi-topology.
They introduced the concept of $\textup{BED}$- concerning Doss, where Fourier-Stieltjes transforms of absolutely continuous measures are specified; \cite{RD1} and \cite{RD2}.
The Bochner-Eberlein-Doss  ($\textup{BED}$) is derived from the famous
theorem proved in \cite{RD1} and \cite{RD2}. They revealed that if $G$ is a locally compact Abelian
group, then the group algebra $L_{1}(G)$  is a $\textup{BED}$- algebra. Later \cite{AP}, the researcher revealed that $l^{p}(X, A)$ is a $\textup{BED}$- algebra if and only if $A$ is so.

The basic terminologies and the related information on  $\textup{BED}$- algebras are
extracted from \cite{F1}, \cite{kan}, and \cite{klar}.

  In this paper, $A$ is a commutative Banach algebra with the dual space $A^{*}$.
Let  $\Delta(A)$ be the character space of $ A$ with the Gelfand topology. $\Delta(A)$ is the set consisting of all non-zero multiplicative
linear functionals on $A$.  Assume that $C_{b}(\Delta(A))$ is the space consisting of all complex-valued continuous and bounded functions
on $\Delta(A)$, with sup-norm $\|.\|_{\infty}$ and pointwise product. The Gelfand map $A\to C_{b}(\Delta(A))$ is a  continuous algebra homomorphism on  $A$, where $a\mapsto \hat a$ such that $\hat{a}(\varphi)= \varphi(a)$; ($\varphi\in \Delta(A)$).
  If the Banach algebra $A$ is  semisimple, then the Gelfand map $\Gamma_{A}: A\to\widehat A$, $a\mapsto \hat a$, is injective or equivalently:
\begin{align*}
\underset{\varphi\in\Delta(A)}{\bigcap}\ker(\varphi)=\{0\}.
\end{align*}
Let $A$ be a semisimple Banach algebra. Then $\Delta(A)$ is compact if and only if $A$ is unital; See \cite{kan}.
A continuous linear operator $T$ on $A$ is named a multiplier if for all $x,y\in A$, $T(xy)=xT(y)$.
The set of all multipliers on $A$ will be denoted by $M(A)$. It is obvious that $M(A)$ is a Banach algebra, and if $A$ is an unital Banach algebra, then 
$M(A)\cong A$; See \cite{kan}.  As observed in   \cite{klar},  for each $T\in M(A)$ there exists a unique bounded  continuous function $\widehat{T}$ on $\Delta(A)$ expressed as:
$$\varphi(Tx)=\widehat{T}(\varphi)\varphi(x),$$
for all $x\in A$ and $\varphi\in \Delta(A)$.
By setting $\{\widehat{T}: T\in M(A)\}$, the $\widehat{M(A)}$ is yield.  Put ${\cal {M}}(A) :=\{ \sigma\in C_{b}(\Delta(A)): ~ \sigma.\hat{A}\subseteq \hat A\}$. Then $\widehat{M(A)}= \cal{M}(A)$, whenever $A$ is a semisimple Banach algebra. 
 Let $A$ be a Banach algebra. Then 
a bounded complex-valued continuous function $\sigma$ on $\Delta(A)$  is named a $\textup{BSE}$- function, if there exists a positive real number $\beta$ in a sense that for every finite complex number $c_{1},\cdots,c_{n}$,  and  the same many $\varphi_{1},\cdots,\varphi_{n}$ in $\Delta(A)$ the following inequality
$$\mid\sum_{i=1}^{n}c_{i}\sigma(\varphi_{i})\mid\leq \beta\|\sum_{i=1}^{n}c_{i}\varphi_{i}\|_{A^{*}}$$
holds.\\

The set of all $\textup{BSE}$- functions is expressed by $C_{\textup{BSE}}(\Delta( A))$, where for 
 each $\sigma$, the $\textup{BSE}$- norm of $\sigma$, $\|\sigma\|_{\textup{BSE}}$ 
is the infimum of all  $\beta$s  applied in the above inequality.   That $(C_{\textup{BSE}}(\Delta(A)), \|.\|_{\textup{BSE}})$ is a semisimple Banach subalgebra of $C_{b}(\Delta(A))$ is in Lemma 1 proved in \cite{E6}. The algebra $A$ is named a $\textup{BSE}$- algebra  if it meets the following condition:
$$ C_{\textup{BSE}}(\Delta(A)) = \widehat{M(A)}.$$ 
If $A$ is unital, then $\widehat{M(A)}= \widehat{A}\mid_{\Delta(A)}$, indicating that $A$ is a $\textup{BSE}$- algebra if and only if  $C_{\textup{BSE}}(\Delta(A)) = \widehat{A}\mid_{\Delta(A)}$. In this paper, by $\widehat A$ we means $ \widehat{A}\mid_{\Delta(A)}$.
The semisimple Banach algebra $A$ is named a norm- $\textup{BSE}$ algebra if there exists some  $K>0$ in a sense that for each $a\in A$, the following holds: 
$$
\|a\|_{A}\leq K\|\hat{a}\|_{\textup{BSE}};
$$
See\cite{DU}.
In general, 
\begin{align*}
\|\widehat a\|_{\infty} &\leq \|\widehat a\|_{\textup{BSE}} \leq \|\widehat a\|_{A^{**}}= \|a\|_{A}      
\end{align*}
for each $a\in A$.
The function $\sigma\in C_{BSE}(\Delta(A))$ is  a ${\textup{BED}}$- function, if for all $\epsilon>0$, there exists some compact set such $K\subseteq \Delta(A)$, where  for all $c_{i}\in\mathbb C$ and for all $\varphi_{i}\in\Delta(A)\backslash K$ the following inequality:
$$
|\sum_{i=1}^{n} c_{i}\sigma(\varphi_{i}) |\leq \epsilon\|\sum_{i=1}^{n}c_{i}\varphi_{i}\|_{A^{*}}
$$
holds. This definition of $\textup{BED}$-  functions is a modification of the definition in \cite{RD1}.
  The set of all $\textup{BED}$-  functions is expressed by the $C_{BSE}^{0}(\Delta(A))$.  Clearly, $C_{BSE}^{0}(\Delta(A))$ is a closed ideal of $ C_{BSE}(\Delta(A))$; see[\cite{F1}, Corollary 3.9].

  In \cite{kan}, it is proved that 
$$
C_{0}(X, A)= C_{0}(X)\check\otimes A
$$
as two Banach algebras which are isomorphic, where $C_{0}(X, A)$ is the Banach algebra of all continuous functions $f: X\to A$ which vanish at infinity. $C_{0}(X, A)$ with the following norm
$$
\|f\|_{\infty, A}= sup\{\|f(x)\|_{A}: ~ x\in X\}
$$
is a commutative Banach algebra with a pointwise product. It is proven that  $C_{0}(X, A)$ is a $\textup{BSE}$- algebra of type I if and only if $A$  is a $\textup{BSE}$- algebra of type I; See \cite{ARS}.
In the following, we define several types of $\textup{BSE}$- algebras that are used in this paper.
\begin{DEF}
Let $A$ be a commutative  Banach algebra where $\Delta(A)$ is non-empty. Then:\\
(i) $A$ is a $\textup{BSE}$- algebra if and only if $ C_{\textup{BSE}}(\Delta(A)) = \widehat{M(A)}$.\\
(ii) $A$ is a weak- $\textup{BSE}$ algebra if and only if $C_{\textup{BSE}}(\Delta(A)) = \hat A$.\\
(iii) $A$ is a  $\textup{BED}$- algebra if and only if $C_{\textup{BSE}}^{0}(\Delta(A)) = \hat{A}$.\\
(iv)  $A$ is a weak- $\textup{BED}$- algebra if and only if $C_{\textup{BSE}}^{0}(\Delta(A)) = \widehat{M(A)}$.
\end{DEF}
In the following, we characterize different type of   $\textup{BSE}$- algebras for commutative $C^{*}$- algebra $ C_{0}(X)$.
\begin{ex}
Assume that $A= C_{0}(X)$. Then \\
(i) $C_{\textup{BSE}}(\Delta(A)) = {C_{b}(X\widehat)}$ and so $A$ is a $\textup{BSE}$- algebra.\\
(ii)  $A$ is a weak-$\textup{BSE}$ algebra  if and only if  $X$ is compact.\\
(iii)  $C_{\textup{BSE}}^{0}(\Delta(A)) ={C_{0}(X \widehat)}$ and so $A$ is a $\textup{BED}$- algebra.\\
(iv)  $A$ is a weak- $\textup{BED}$ algebra if and only if   $X$ is compact.
\end{ex}
In this paper, $\textup{BSE}$-  types and $\textup{BED}$- types of  vector-valued $C^{*}$- algebra $C_{0}(X, A)$ will be investigated. It will be shown that:
\begin{thm}
Let $A$ be a commutative semisimple  Banach algebra and $X$ be a locally compact Hausdorff space. Then \\
(i) $C_{0}(X, A)$ is a ${\textup{BSE}}$-  algebra if and only if $A$ is a  ${\textup{BSE}}$- algebra.\\
(ii) $C_{0}(X, A)$ is a weak- ${\textup{BSE}}$  algebra if and only if $A$ is a  weak- ${\textup{BSE}}$ algebra and $X$ is compact.\\
(iii) $C_{0}(X, A)$ is a ${\textup{BED}}$-  algebra if and only if $A$ is a  ${\textup{BED}}$- algebra.\\
(iv) $C_{0}(X, A)$ is a weak- ${\textup{BED}}$  algebra if and only if $A$ is a  weak- ${\textup{BED}}$ algebra and $X$ is compact.
\end{thm}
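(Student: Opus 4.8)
The plan is to push everything down to the base algebra $A$ by computing, coordinate-wise over a factorization of the spectrum, the three objects that enter the four definitions: the character space, the dual norm on $\Delta(C_0(X,A))$, and the four function algebras $C_{\textup{BSE}}$, $C^0_{\textup{BSE}}$, $\widehat{M(\cdot)}$, $\widehat{\,\cdot\,}$. First I would identify $\Delta(C_0(X,A))\cong X\times\Delta(A)$ via the homeomorphism sending $(x,\varphi)$ to the character $f\mapsto\varphi(f(x))$; this is the spectrum of the injective tensor product $C_0(X)\check\otimes A$ and also shows $C_0(X,A)$ is semisimple, so that the relation $\widehat{M(\cdot)}=\mathcal M(\cdot)$ is available. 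The computational engine of the whole argument is the dual-norm formula: for distinct $x_1,\dots,x_m\in X$, characters $\varphi_{k,l}\in\Delta(A)$ and scalars $c_{k,l}$,
\[
\Big\|\sum_{k,l}c_{k,l}(x_k,\varphi_{k,l})\Big\|_{C_0(X,A)^*}=\sum_{k}\Big\|\sum_{l}c_{k,l}\varphi_{k,l}\Big\|_{A^*}.
\]
I would prove this by separating the $x_k$ with disjointly supported bump functions $\phi_k\in C_0(X)$ and testing against $f=\sum_k\phi_k a_k$; disjointness of supports forces $\|f\|_{\infty,A}=\max_k\|a_k\|_A$, so the supremum factorizes over the distinct base points.

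From this formula the triangle inequality yields at once the slice description $\sigma\in C_{\textup{BSE}}(X\times\Delta(A))$ if and only if $\sigma\in C_b(X\times\Delta(A))$ and $\sup_{x}\|\sigma(x,\cdot)\|_{\textup{BSE}}<\infty$, with $\|\sigma\|_{\textup{BSE}}=\sup_x\|\sigma(x,\cdot)\|_{\textup{BSE}}$; informally $C_{\textup{BSE}}(\Delta(C_0(X,A)))=C_b(X,C_{\textup{BSE}}(\Delta(A)))$. Running the same bump-function test on the multiplier condition $\sigma\cdot\widehat{C_0(X,A)}\subseteq\widehat{C_0(X,A)}$, and on the Gelfand transform directly, gives the companion identifications $\widehat{M(C_0(X,A))}=C_b(X,\widehat{M(A)})$ and $\widehat{C_0(X,A)}=C_0(X,\widehat A)$. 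The delicate one is the $\textup{BED}$ space: replacing an arbitrary compact $K\subseteq X\times\Delta(A)$ in the tail condition by $\pi_X(K)\times\pi_{\Delta(A)}(K)$, the points whose base coordinate lies outside the compact set $\pi_X(K)$ force $\|\sigma(x,\cdot)\|_{\textup{BSE}}\le\epsilon$, while on the compact base part one gets a uniform $\textup{BED}$ tail; this should give $C^0_{\textup{BSE}}(\Delta(C_0(X,A)))=C_0(X,C^0_{\textup{BSE}}(\Delta(A)))$.

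With the four identifications in hand the theorem becomes a comparison. For (i), $C_0(X,A)$ is $\textup{BSE}$ exactly when $C_b(X,C_{\textup{BSE}}(\Delta(A)))=C_b(X,\widehat{M(A)})$; feeding in constant sections $\sigma(x,\varphi)=\tau(\varphi)$ extracts $C_{\textup{BSE}}(\Delta(A))=\widehat{M(A)}$, and the converse is immediate from the slice description. Part (iii) is identical with $C^0_{\textup{BSE}}$ and $\widehat A$ in place, except that slices are extracted using $g\otimes\tau$ with $g\in C_0(X)$ since both members are now $C_0(X,\cdot)$; no compactness of $X$ intervenes because both already vanish at infinity. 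For (ii) and (iv) one side is a $C_b(X,\cdot)$ and the other a $C_0(X,\cdot)$: a nonzero constant section (e.g. $x\mapsto\widehat{a_0}$ with $a_0\neq0$, or $x\mapsto\mu$ with $\mu\in\widehat{M(A)}$) lies in the $C_b$ side but not the $C_0$ side unless $X$ is compact, which forces compactness; once $X$ is compact both sides collapse to $C(X,\cdot)$ and the coefficient equality delivers the weak-$\textup{BSE}$, resp. weak-$\textup{BED}$, property of $A$.

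I expect the main obstacle to be the directions that require producing an honest element of $\widehat{C_0(X,A)}$ (the inclusions ``$\supseteq$'' in (ii)--(iii) and the reconstruction step inside the $\textup{BED}$ identification): knowing each slice $\sigma(x,\cdot)=\widehat{b_x}$ only determines $b_x$ pointwise, and one must upgrade joint continuity of $\sigma$ to $\|\cdot\|_A$-continuity of $x\mapsto b_x$. I would resolve this by observing that when $A$ is weak-$\textup{BSE}$ (resp. $\textup{BED}$) the Gelfand map is a continuous bijection of Banach spaces onto $(C_{\textup{BSE}},\|\cdot\|_{\textup{BSE}})$ (resp. $(C^0_{\textup{BSE}},\|\cdot\|_{\textup{BSE}})$), so the open mapping theorem makes $\|\cdot\|_A$ and $\|\cdot\|_{\textup{BSE}}$ equivalent; continuity in $\|\cdot\|_{\textup{BSE}}$ is then extracted from joint continuity of $\sigma$ by a uniform-continuity-on-compacta estimate combined with the $\textup{BED}$ tail bound. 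Verifying this norm equivalence and the compact-set bookkeeping in the $\textup{BED}$ identification is where the real work lies; the four equivalences themselves are then formal.
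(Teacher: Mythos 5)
Your dual-norm sum formula is correct, and it is genuinely sharper than the paper's Lemma~\ref{l3v} (which records only one-sided inequalities); the slice description you derive from it --- $\sigma\in C_{\textup{BSE}}(\Delta(C_0(X,A)))$ iff $\sigma$ is bounded, jointly continuous, and $\sup_x\|\sigma(x,\cdot)\|_{\textup{BSE}}<\infty$ --- is also right. The fatal gap is exactly the step you flag as ``where the real work lies'': passing from this slice description to $C_b(X,C_{\textup{BSE}}(\Delta(A)))$, i.e.\ upgrading joint continuity of $\sigma$ to norm continuity of $x\mapsto\sigma(x,\cdot)$. This upgrade is not merely hard; it is false, and no open-mapping argument can rescue it. The open mapping theorem does give equivalence of $\|\cdot\|_A$ and $\|\cdot\|_{\textup{BSE}}$ on $\hat A$ when $A$ is unital and BSE, but what joint continuity supplies (via compactness of $\Delta(A)$) is continuity of the slices in $\|\cdot\|_\infty$ only, and $\|\cdot\|_\infty$-continuity does not imply $\|\cdot\|_{\textup{BSE}}$-continuity unless $\hat A$ is uniformly closed, i.e.\ unless $A$ is essentially a uniform algebra. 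Concretely: take $A=\ell^1(\mathbb Z)$ viewed as the algebra of absolutely convergent Fourier series on $\mathbb T$ (unital, semisimple, and both BSE and BED by the classical Bochner--Eberlein theorem, since $M(\mathbb Z)=\ell^1(\mathbb Z)$), take $X=\{0\}\cup\{1/n:n\in\mathbb N\}$, and set $\sigma(1/n,\cdot)=\widehat{f_n}$, $\sigma(0,\cdot)=0$, where $f_n=2^{-n}P_n$ for the Rudin--Shapiro polynomials $P_n$, so that $\|f_n\|_A=1$ while $\|\widehat{f_n}\|_\infty\le C2^{-n/2}\to0$. Then $\sigma$ is jointly continuous and every slice has BSE norm at most $1$, so $\sigma\in C_{\textup{BSE}}(\Delta(C_0(X,A)))$ by your sum formula; but any $g$ with $\hat g=\sigma$ would have to satisfy $g(1/n)=f_n$ and $g(0)=0$ by semisimplicity, which is not $\|\cdot\|_A$-continuous, so $\sigma\notin\widehat{C_0(X,A)}=\widehat{M(C_0(X,A))}$ (the algebra is unital here since $X$ is compact). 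Hence $C_0(X,A)$ is neither BSE nor BED although $A$ is both and $X$ is compact: the ``if'' directions of (i)--(iv) cannot be proved by your scheme, because for such $A$ they are not true.

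For comparison, the paper's own proof founders at the identical point, only silently: in Lemma~\ref{l1cbse} it sets $Tf(x)=f(x)a_x$ where $\sigma(x,\cdot)=\widehat{a_x}$, and concludes $Tf\in C_0(X,A)$ from the pointwise bound $\|Tf(x)\|_A\le M\|\sigma\|_{\textup{BSE}}\|f(x)\|_A$; that bound controls boundedness and decay at infinity but not continuity of $x\mapsto a_x$, and in the example above $T(1\otimes 1_A)$ is exactly the discontinuous function $x\mapsto a_x$, so $T$ is not a multiplier. So your proposal is architecturally the same as the paper's (factor the spectrum, slice, use constant sections for one implication, open mapping theorem for norm control), with your sum formula being the cleaner quantitative tool; the difference is that you isolated the weak point explicitly, and that weak point is precisely where both arguments --- and the statement itself, in every direction that must manufacture a multiplier or Gelfand transform of $C_0(X,A)$ out of slice data --- break down. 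The converse directions, ``$C_0(X,A)$ BSE/BED $\Rightarrow$ $A$ BSE/BED (plus compactness of $X$ in (ii) and (iv))'', which need only constant sections and slicing, are sound in both your proposal and the paper.
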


We show that:
$$
C_{b}(X, A)= C_{0}(X, A)
$$
if and only if $X$ is compact.  It is found and approved that
$ C_{0}(X, A)$ is a $C^{*}$- algebra if and only if $A$ is so. Furthermore, new criteria are presented to make Banach algebras as a $C^{*}$- algebra.
\section{$C^{*}$- properties of $C_{0}(X, A)$}
Let $A$ be a commutative semisimple Banach algebra. In \cite{DM}, the author studies the vector-valued group algebra $L^{1}(G, A)$ to be a group algebra. He showed that $L^{1}(G, A)$ is a group algebra if and only if $A$ is a group algebra.
Furthermore, if $A$ has Radon- Nikodym property, then $M(G, A)$ is a measure algebra on a group if and only if $A$ is a measure algebra.
The researcher in \cite{MR} gives a characterization of commutative   Banach algebra which are the group algebras on locally compact Abelian groups.   Let $X$ be a locally compact Hausdorff space.
In this section, 
we show that $C_{0}(X, A)$ is a $C^{*}$- algebra, if and only if $A$ is so. In addition, new criteria are presented to make semisimple commutative Banach algebras as a $C^{*}$- algebra.
\begin{DEF}
Let $X$ be a Banach space and $(X, \leq)$ be a totally ordered set such that $X$ is a linear- lattice. Then\\
(i) If $\|a\vee b\|_{X}= {\textup{max}} \{\|a\|_{X}, \|b\|_{X}\}$, for all $a\geq 0$ and $b\geq 0$, then $X$ is called $M$- lattice.\\
(ii) If $\|a+ b\|_{X}= \|a\|_{X}+ \|b\|_{X}$, for all $a\geq 0$ and $b\geq 0$, then $X$ is called $L$- lattice.
\end{DEF}
We now mention the following definition of $L_{1}$- inducing character a Banach algebra which studied in \cite{DM}.
\begin{DEF}
Let $A$ be a commutative semisimple Banach algebra and $\varphi\in\Delta(A)$. Then $\varphi$ is named inducing group algebra if the following conditions hold:\\
(i) $\|\varphi\|_{A^{*}}=1$\\
(ii) Let  $P_{\varphi} :=\{a: \varphi(a)= \|\varphi\|\}$. Then $P_{\varphi}$ is a $L$- lattice.\\ 
(iii) If  $R_{\varphi}:= \{ a- b:  a,b\in P_{\varphi}\}$ for all  $a, b\in R_{\varphi}$ with $a\wedge b=0$. Then 
$$
\|a+ b\|_{A}= \|a- b\|_{A}
$$
(iv) For each $a\in A$ there exist unique $a_{1}, a_{2}\in R_{\varphi}$ where  $a= a_{1}+ia_{2}$. We put
$$
Re(a)= a_{1}, ~~ Im(a)= a_{2}
$$
(v) Let $|a|:= \textup{sup}\{Re(e^{i\Theta}a): ~ 0\leq \Theta\leq 2\pi\}$. Then $|a|\in A$ and $\| ~|a|~ \|_{A}= \|a\|_{A}$.\\
The set of all $L_{1}$- inducing characters of Banach algebra characterized by $A$ is denoted by $d(A)$.
\end{DEF}
Now, we will state some practical theorems and results; See\cite{DM}.
\begin{thm}
Let $A$ be a commutative semisimple Banach algebra and $G$ be an Abelian locally compact group. Then\\
(i) $A$ is a group algebra if and only if $A$ is Touberian and
$$
d(A)= \Delta(A).
$$
(ii) The algebra $L^{1}(G, A)$ is a group algebra if and only if 
$$
d(A)= \Delta(A).
$$
(iii)
$$
d(L^{1}(G, A)) = \Delta(L^{1}(G, A))
$$
if and only if 
$$
\Delta(A)= d(A)
$$
\end{thm}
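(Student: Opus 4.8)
The plan is to treat part (i) as the structural core — this is the abstract characterization of group algebras attributed to \cite{DM} — and to derive (ii) and (iii) from it together with a coordinatewise analysis of $L_1$-inducing characters through the tensor decomposition. The starting point is the isometric identification $L^1(G,A)\cong L^1(G)\,\widehat{\otimes}\,A$ as Banach algebras, from which the character space is read off as $\Delta(L^1(G,A))\cong \widehat{G}\times\Delta(A)$, every character of a projective tensor product of commutative Banach algebras being a product $(\gamma,\psi)$. First I would record this homeomorphism explicitly, identifying $(\gamma,\psi)$ with the functional $f\mapsto \psi\big(\int_G \gamma(-x)f(x)\,dx\big)$, so that the cones $P_{(\gamma,\psi)}$, the real/imaginary decomposition, and the modulus can later be compared fibrewise with those attached to $\psi$ on $A$.

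For part (i), the forward implication is the verification that $L^1(G)$ meets both listed conditions. That $L^1(G)$ is Tauberian is classical, since the functions whose Fourier transforms lie in $C_c(\widehat{G})$ are dense. To see $d(L^1(G))=\widehat{G}=\Delta(L^1(G))$, I would fix $\gamma\in\widehat{G}$, note $\|\varphi_\gamma\|=1$, and compute $P_{\varphi_\gamma}=\{f:\widehat{f}(\gamma)=\|f\|_1\}$; for the trivial character this is exactly the cone of nonnegative functions, on which $\|f+g\|_1=\|f\|_1+\|g\|_1$, delivering the $L$-lattice condition, and the general case follows by the isometric twist $f\mapsto\gamma f$. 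Conditions (iii)--(v) of the inducing-character definition then reduce to the decomposition of an integrable function into real and imaginary parts and to the identity $\big\|\,|f|\,\big\|_1=\|f\|_1$, transported along the same twist. The reverse implication is the genuinely deep part: from a Tauberian $A$ with $d(A)=\Delta(A)$ one must manufacture a locally compact abelian group $G$ with $A\cong L^1(G)$; here the cones $P_\varphi$ and the modulus supply an $L$-lattice norm and order structure realizing $A$ as an $L$-lattice convolution algebra, while the Tauberian hypothesis guarantees the Gelfand image is rich enough for the reconstruction to recover $G$ as a dual group of $\Delta(A)$.

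Granting (i), the heart of the remaining work is (iii). I would prove the factorization $d(L^1(G,A))=\widehat{G}\times d(A)$ inside $\widehat{G}\times\Delta(A)$: a character $(\gamma,\psi)$ is $L_1$-inducing precisely when $\psi$ is, because the positivity cone, the unique real/imaginary splitting, and the supremal modulus of $L^1(G,A)$ disassemble into the $A$-valued, pointwise-in-$G$ analogues of those determined by $\psi$. Since $\widehat{G}=d(L^1(G))$ by the forward part of (i), it follows that $d(L^1(G,A))=\Delta(L^1(G,A))$ if and only if $d(A)=\Delta(A)$, which is (iii). Part (ii) then follows by applying (i) to $B:=L^1(G,A)$: by (iii) the condition $d(B)=\Delta(B)$ is equivalent to $d(A)=\Delta(A)$, and combined with the already-cited equivalence ``$L^1(G,A)$ is a group algebra if and only if $A$ is'' this pins down ``$B$ is a group algebra'' to $d(A)=\Delta(A)$.

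I expect the main obstacle to lie at two linked points. The first is the factorization $d(L^1(G,A))=\widehat{G}\times d(A)$: showing that the $L$-lattice cone, the real/imaginary decomposition, and the supremal modulus of the vector-valued algebra split coordinatewise requires matching the lattice operations of $L^1(G,A)$ with those induced fibrewise by $A$, and this is where semisimplicity of $A$ and the explicit form of the product characters carry the weight. The second is reconciling the Tauberian hypothesis across (i) and (ii): since (ii) asks only for $d(A)=\Delta(A)$, one must verify that in the presence of the group $G$ the Tauberian property of $L^1(G,A)$ is not an additional constraint — either because $d(A)=\Delta(A)$ already forces the needed regularity or because the convolution structure over $G$ supplies it — and isolating the exact mechanism here is the most delicate step of the argument.
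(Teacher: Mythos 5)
The first thing to say is that the paper does not prove this theorem at all: it is stated as background (``Now, we will state some practical theorems and results; See \cite{DM}'') and attributed wholesale to Tewari--Dutta--Madan \cite{DM}, with part (i) being Rieffel's characterization of group algebras \cite{MR}. So there is no internal proof for your proposal to match; what you have written is an attempted reconstruction of those cited results, and it must be judged as such.

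Judged that way, it has genuine gaps, and they sit exactly at the two places you yourself flag as ``obstacles'' --- flagging them is not resolving them. First, the reverse implication of (i), from ``$A$ Tauberian and $d(A)=\Delta(A)$'' to ``$A\cong L^{1}(H)$ for some locally compact abelian $H$,'' is the entire content of Rieffel's theorem; your one sentence about the cones $P_{\varphi}$ supplying ``an $L$-lattice convolution algebra'' and the reconstruction recovering the group ``as a dual group of $\Delta(A)$'' describes what must be proved rather than proving it, and nothing elsewhere in the proposal supplies the construction. Second, your derivation of (ii) does not close: to apply (i) to $B=L^{1}(G,A)$ you need $B$ to be Tauberian, and you never obtain this from $d(A)=\Delta(A)$ alone. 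The asymmetry between (i) (which assumes Tauberianness) and (ii) (which does not) is precisely what makes (ii) a substantive theorem instead of a formal corollary of (i) and (iii), so any proof of (ii) must attack this head-on. Your fallback --- combining (i) with the separately cited equivalence ``$L^{1}(G,A)$ is a group algebra iff $A$ is'' --- yields ``$L^{1}(G,A)$ is a group algebra iff $A$ is Tauberian and $d(A)=\Delta(A)$,'' which is not statement (ii) unless one additionally shows that $d(A)=\Delta(A)$ forces $A$ to be Tauberian, which you do not show. Third, the factorization $d(L^{1}(G,A))=\widehat{G}\times d(A)$, which carries all of (iii), is asserted with a plan (``the cone, the real/imaginary splitting, and the modulus disassemble fibrewise'') but never verified; each of those checks is a nontrivial computation against the norm $\|f\|=\int_{G}\|f(x)\|_{A}\,dx$, and in particular the modulus of Definition 2.2(v), defined as a supremum of $\mathrm{Re}(e^{i\Theta}a)$ over $\Theta$, does not obviously interact well with Bochner integration. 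In short, the forward half of (i) is a reasonable sketch, but the three claims that actually carry the theorem are each left at the level of intention.
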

\begin{DEF}
Let $A$ be a commutative semisimple Banach algebra and  $\varphi\in \Delta(A)$. Then $\varphi$ is called  $C^{*}$-  inducing character if the following conditions hold:\\
(i)
\begin{align*}
  \|\varphi\|_{ A^{*}}=1
\end{align*}
(ii) Let
\begin{align*}
  P_{\varphi}:= \{a\in A: \varphi(a)\geq 0\}. 
\end{align*}
Then $P_{\varphi}$ is a $M$- lattice.\\
(iii) If 
$$
R_{\varphi}:= \{a- b: a, b  \in P_{\varphi}\}
$$
and  $a, b\in R_{\varphi}$
such that $a\wedge b=0$. Then 
$$
\|a+b\|_{ A}= \|a-b\|_{ A}
$$
(iv) For each $a\in  A$ there exist unique $a_{1},a_{2}\in R_{\varphi}$ such that $a= a_{1}+i a_{2}$. We denote that $a_{1}= Re(a)$ and $a_{2}= Im(a)$.\\
(v) Let $|a|:= a\vee (-a)$. Then 
$|a|\in A$ and $\| ~|a|~ \|_{A}= \|a\|_{ A}$ for each $a\in  A$.
The set of all $C^{*}$-  inducing characters  of $A$ is denoted by $D(A)$.
\end{DEF}
\begin{lem}\label{ldc1}
Let $X$ be a locally compact Hausdorff space and $A$ be a  semisimple Banach algebra such that $D(A)= \Delta(A)$. Then $D(C_{0}(X, A))=  \Delta(C_{0}(X, A))$.
\end{lem}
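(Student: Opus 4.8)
The plan is to verify, for an arbitrary character of $C_{0}(X,A)$, the five defining conditions of a $C^{*}$-inducing character by pulling the corresponding structure of $A$ back fibrewise over $X$. The first step is to describe the character space. Since $C_{0}(X,A)=C_{0}(X)\check\otimes A$ and $\Delta(C_{0}(X))=X$, every $\Psi\in\Delta(C_{0}(X,A))$ has the form $\Psi=\Psi_{x_{0},\varphi}$ with $\Psi_{x_{0},\varphi}(F)=\varphi(F(x_{0}))$ for a unique pair $(x_{0},\varphi)\in X\times\Delta(A)$, and conversely each such functional is a character. It therefore suffices to fix one $\Psi_{x_{0},\varphi}$ and, using the hypothesis $\varphi\in\Delta(A)=D(A)$, show that $\Psi_{x_{0},\varphi}$ lies in $D(C_{0}(X,A))$.

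Condition (i) is immediate: $|\Psi_{x_{0},\varphi}(F)|\le\|\varphi\|_{A^{*}}\|F(x_{0})\|_{A}\le\|F\|_{\infty,A}$ gives $\|\Psi_{x_{0},\varphi}\|\le1$, and choosing $a\in A$ with $|\varphi(a)|$ close to $\|a\|_{A}=1$ together with $F=\eta\otimes a$, where $\eta\in C_{0}(X)$, $0\le\eta\le1$, $\eta(x_{0})=1$, yields the reverse inequality, so $\|\Psi_{x_{0},\varphi}\|=1$. For the remaining conditions I transport the data that $\varphi$ supplies on $A$ — the real form $R_{\varphi}$, the cone $P_{\varphi}$, the maps $\mathrm{Re}_{\varphi},\mathrm{Im}_{\varphi}$, the modulus $a\mapsto|a|$, and the lattice operations on $R_{\varphi}$ — to $C_{0}(X,A)$ pointwise in $x$, setting $\mathrm{Re}(F)(x):=\mathrm{Re}_{\varphi}(F(x))$, $\mathrm{Im}(F)(x):=\mathrm{Im}_{\varphi}(F(x))$, $|F|(x):=|F(x)|$ and $(F\vee G)(x):=F(x)\vee G(x)$, and identifying the real form of $C_{0}(X,A)$ attached to $\Psi_{x_{0},\varphi}$ with $C_{0}(X,R_{\varphi})$.

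With this identification each of (ii)–(v) reduces to the corresponding property of $\varphi$ together with an elementary sup-norm computation. For the $M$-lattice condition (ii), for $F,G\ge0$ one has $\|F\vee G\|_{\infty,A}=\sup_{x}\|F(x)\vee G(x)\|_{A}=\sup_{x}\max\{\|F(x)\|_{A},\|G(x)\|_{A}\}=\max\{\|F\|_{\infty,A},\|G\|_{\infty,A}\}$, where the middle equality is the $M$-lattice property of $P_{\varphi}$ in each fibre and the last is that $\max$ commutes with $\sup$. Condition (v) is the same computation, $\||F|\|_{\infty,A}=\sup_{x}\||F(x)|\|_{A}=\sup_{x}\|F(x)\|_{A}=\|F\|_{\infty,A}$. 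For (iii), if $F\wedge G=0$ then $F(x)\wedge G(x)=0$ for every $x$, so $\|F(x)+G(x)\|_{A}=\|F(x)-G(x)\|_{A}$ fibrewise and taking suprema gives $\|F+G\|_{\infty,A}=\|F-G\|_{\infty,A}$. For (iv), the fibrewise decomposition $F=\mathrm{Re}(F)+i\,\mathrm{Im}(F)$ lands in $C_{0}(X,R_{\varphi})$, and its uniqueness is inherited from the fibrewise uniqueness granted by (iv) for $\varphi$.

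The genuine difficulty is regularity rather than algebra: I must confirm that the fibrewise operations actually return elements of $C_{0}(X,A)$, i.e. that $x\mapsto\mathrm{Re}_{\varphi}(F(x))$, $x\mapsto\mathrm{Im}_{\varphi}(F(x))$ and $x\mapsto|F(x)|$ are norm-continuous and vanish at infinity whenever $F$ does, and that $C_{0}(X,R_{\varphi})$ is indeed the real form attached to $\Psi_{x_{0},\varphi}$ so that the uniqueness in (iv) is not lost. Continuity requires the three structural maps $A\to A$ to be continuous: the modulus is isometric by (v), while $\mathrm{Re}_{\varphi},\mathrm{Im}_{\varphi}$ are bounded linear projections onto $R_{\varphi}$ (boundedness following, e.g., from the closed graph theorem once $R_{\varphi}$ is seen to be closed), so composition with the continuous map $F:X\to A$ stays continuous; vanishing at infinity transfers because $\||F(x)|\|_{A}=\|F(x)\|_{A}\to0$ and the linear maps are bounded. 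Once these membership facts are secured, conditions (i)–(v) hold for every $\Psi_{x_{0},\varphi}$, and since this character was arbitrary we obtain $D(C_{0}(X,A))=\Delta(C_{0}(X,A))$.
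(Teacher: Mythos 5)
Your proof follows essentially the same route as the paper's: identify the characters of $C_{0}(X,A)$ as $x\otimes\varphi$ with $(x,\varphi)\in X\times\Delta(A)$, then verify the five conditions of the $C^{*}$-inducing-character definition fibrewise, reducing each one to the corresponding property of $\varphi$ on $A$ together with a $\sup$/$\max$ interchange in the norm $\|\cdot\|_{\infty,A}$. If anything you are more careful than the paper, whose proof performs the same pointwise constructions (e.g.\ defining $f_{1}(y)=a_{y}^{1}$, $f_{2}(y)=a_{y}^{2}$ in step (iv)) without ever checking that the resulting functions are continuous and vanish at infinity --- precisely the regularity issue you isolate and sketch an argument for.
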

\begin{proof}
(i)
Let $x\in X$ and $\varphi\in D(A)$, then $x\otimes \varphi\in D((C_{0}(X, A)))$ and $\|x\otimes\varphi\|= \|\varphi\|=1$. In fact,
if $\varphi\in\Delta(A)$, then 
\begin{align*}
1 &= \|\varphi\|= \|\delta_{x}\|.\|\varphi\|\\
     &= \|\delta_{x}\otimes\varphi\|:= \|x\otimes \varphi\|\leq\|\delta_{x}\|.\|\varphi\|\leq\|\varphi\|=1
\end{align*}
Thus $\|x\otimes\varphi\|=1$.\\
 (ii) $P_{x\otimes\varphi}$ is a $M$- lattice. If $f,g\in P_{x\otimes\varphi}$, then
\begin{align*}
\|f\vee g\|_{\infty, A} &= sup\{\|f\vee g(x)\|_{A} : x\in X\}\\
                                     &=  sup\{\|f(x)\vee g(x)\|_{A} : x\in X\}\\
                                     &=  sup ~max\{\|f(x)\|_{A}, \|g(x)\|_{A} : x\in X\}\\
                                      &= max~ sup\{\|f(x)\|_{A}, \|g(x)\|_{A} : x\in X\}\\
                                        &= max\{\|f(x)\|_{\infty,A}, \|g(x)\|_{\infty,A} : x\in X\}\\
                                         &= \|f\|_{\infty,A}\vee \|g\|_{\infty,A}
\end{align*}
(iii)
If $f,g \in R_{x\otimes\varphi}$ and $f\wedge g =0$,  so $f(x)\wedge g(x)= 0$, for each $x\in X$. Then 
\begin{align*}
\|f+g\|_{\infty, A} &= sup\{\|f(x)+ g(x)\|_{A} : x\in X\}\\
                                  &= sup\{ \|f(x)-g(x)\|_{A}: x\in X\}\\
                                   &= \|f- g\|_{\infty, A}   
\end{align*}
(iv) If $f\in C_{0}(X, A)$, and $y\in X$, then there exist $a_{y}^{1}, a_{y}^{2}\in R_{\varphi}$ such that $f(y)= a_{y}^{1}+i a_{y}^{2}$. Define $f_{1}(y)= a_{y}^{1}$ and $f_{2}(y)= a_{y}^{2}$, for all $y\in X$. Thus 
$f= f_{1}+ if_{2}$ and $f_{1}, f_{2}\in R_{x\otimes\varphi}$.\\
(v)  If $f\in C_{0}(X, A)$, then $|f|(x)= f(x)\vee (-f(x))$ and $a\leq b$ if and only if $\varphi(a-b)\geq 0$ so $(A, \leq)$ is ordered set. The following is the yield:
\begin{align*}
\|~|f|~\|_{\infty, A} &= sup \{\| ~|f|(x)~ \|_{A} : ~ x\in A\}\\
                                   &=  sup \{\| ~|f(x)|~ \|_{A} : ~ x\in A\}\\
                                   &=   sup \{\| ~|f(x)|~ \|_{A} : ~ x\in A\}\\
                                  &= \|f\|_{\infty, A}
\end{align*}
\end{proof}
\begin{lem}\label{ldc2}
 Let $X$ be a locally compact Hausdorff space and $A$ be a  semisimple Banach algebra such that $D(C_{0}(X, A))= \Delta(C_{0}(X, A))$. Then $D(A) = \Delta(A)$ and conversely.
\end{lem}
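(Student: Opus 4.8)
The plan is to regard the ``and conversely'' clause as already disposed of by Lemma~\ref{ldc1}, so that the genuine content is the forward implication: assuming $D(C_{0}(X,A))=\Delta(C_{0}(X,A))$, I would prove that every $\varphi\in\Delta(A)$ is a $C^{*}$- inducing character, i.e. $\Delta(A)\subseteq D(A)$, the reverse inclusion being automatic from the definition of $D(A)$. The mechanism is the identification $\Delta(C_{0}(X,A))\cong X\times\Delta(A)$, under which a character is the evaluation $x\otimes\varphi\colon f\mapsto\varphi(f(x))$. Fixing $\varphi\in\Delta(A)$ and an arbitrary $x_{0}\in X$, the character $\psi:=x_{0}\otimes\varphi$ lies in $\Delta(C_{0}(X,A))=D(C_{0}(X,A))$ by hypothesis, hence satisfies all five defining conditions; the task is to pull each condition back to $\varphi$.

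The transfer device I would build is an isometric embedding $\iota\colon A\to C_{0}(X,A)$, $\iota(a)=g\otimes a$, where $g\in C_{0}(X)$ is supplied by Urysohn's lemma with $0\le g\le1$ and $g(x_{0})=1=\|g\|_{\infty}$ (such $g$ exists because $X$ is locally compact Hausdorff). Then $\|\iota(a)\|_{\infty,A}=\|g\|_{\infty}\|a\|_{A}=\|a\|_{A}$ and $(x_{0}\otimes\varphi)(\iota(a))=g(x_{0})\varphi(a)=\varphi(a)$, so $\varphi=\psi\circ\iota$. Since $g\ge0$ pointwise and scaling by a nonnegative scalar commutes with the pointwise lattice operations of $C_{0}(X,A)$, the map $\iota$ preserves $\vee$ and $\wedge$; consequently $\iota(P_{\varphi})\subseteq P_{\psi}$, $\iota(R_{\varphi})\subseteq R_{\psi}$, and $\iota(|a|)=|\iota(a)|$.

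With $\iota$ in hand the five conditions fall out in turn. For (i) I would first record the general identity $\|x_{0}\otimes\varphi\|=\|\varphi\|_{A^{*}}$ (the lower bound from $\iota$, the upper bound from $|\varphi(f(x_{0}))|\le\|\varphi\|\,\|f\|_{\infty,A}$), whence $\|\varphi\|=\|\psi\|=1$. Conditions (ii), (iii) and (v) transfer directly: for $a,b\in P_{\varphi}$ one computes $\|a\vee b\|_{A}=\|\iota(a)\vee\iota(b)\|_{\infty,A}=\max\{\|\iota(a)\|,\|\iota(b)\|\}=\max\{\|a\|_{A},\|b\|_{A}\}$ from the $M$- lattice property of $P_{\psi}$; for $a,b\in R_{\varphi}$ with $a\wedge b=0$ one has $\iota(a)\wedge\iota(b)=\iota(a\wedge b)=0$ and hence $\|a+b\|_{A}=\|\iota(a)+\iota(b)\|=\|\iota(a)-\iota(b)\|=\|a-b\|_{A}$; and $\|\,|a|\,\|_{A}=\|\,|\iota(a)|\,\|=\|\iota(a)\|=\|a\|_{A}$, together with $|\iota(a)|\in C_{0}(X,A)$, forces $|a|\in A$.

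The step I expect to be the main obstacle is (iv), the unique decomposition $a=a_{1}+ia_{2}$ with $a_{j}\in R_{\varphi}$. For existence I would decompose $\iota(a)=F_{1}+iF_{2}$ with $F_{j}\in R_{\psi}$ (condition (iv) for $\psi$) and set $a_{j}:=F_{j}(x_{0})$; since $F\in P_{\psi}$ forces $\varphi(F(x_{0}))\ge0$, evaluation at $x_{0}$ carries $R_{\psi}$ into $R_{\varphi}$, and evaluating $\iota(a)=g\otimes a$ at $x_{0}$ yields $a=a_{1}+ia_{2}$. Uniqueness then follows cleanly: any two decompositions of $a$ inside $R_{\varphi}$ are carried by the $R_{\varphi}$- preserving isometry $\iota$ to two decompositions of $\iota(a)$ inside $R_{\psi}$, which coincide by the uniqueness clause for $\psi$, and injectivity of $\iota$ returns the equality in $A$. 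The subtlety genuinely worth flagging is that the pointwise lattice structure on $C_{0}(X,A)$ must be checked to be compatible with $\iota$ and with the cones $P_{\psi},R_{\psi}$ attached to $\psi$, so that every manipulation above remains inside the correct cone.
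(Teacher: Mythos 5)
Your proposal is correct and follows essentially the same route as the paper's own proof: both fix $\psi = x_{0}\otimes\varphi$, use Urysohn's lemma to produce $g\in C_{0}(X)$ with $g(x_{0})=1=\|g\|_{\infty}$, transfer each defining condition back to $\varphi$ through the elementary tensors $g\otimes a$ (your map $\iota$), and dispose of the converse by citing Lemma~\ref{ldc1}. If anything, your write-up is more complete than the paper's, since the paper's proof never verifies condition (iii) of the definition (that $\|a+b\|_{A}=\|a-b\|_{A}$ when $a\wedge b=0$ in $R_{\varphi}$) nor the uniqueness half of condition (iv), both of which you obtain cleanly from the lattice-preserving isometry $\iota$ and the injectivity of evaluation at $x_{0}$.
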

\begin{proof}
(i) Let $\varphi\in\Delta(A)$ and $x_{0}\in X$. Then 
$$
x_{0}\otimes \varphi\in \Delta(C_{0}(X, A))= D(C_{0}(X, A)).
$$
By applying Urysohn's lemma there exits $f_{0}\in C_{0}(X)$ such that $0\leq f_{0}\leq 1$ and $f_{0}(x_{0})=1$. We have
\begin{align*}
\|x_{0}\otimes\varphi\|_{\infty, A} &\geq |x_{0}\otimes\varphi(f_{0}\otimes a)|\\
                                                          &= f_{0}(x_{0})|\varphi(a)|= |\varphi(a)|
\end{align*}
for all $a\in A$, so $\|x_{0}\otimes\varphi\|_{\infty, A} \geq \|\varphi\|_{A^{*}}$. Moreover, 
$$\|x_{0}\otimes\varphi\|_{\infty, A} \leq \|\delta_{x_{0}}\|.\|\varphi\|_{A^{*}}\leq \|\varphi\|_{A^{*}}.$$
 Thus $\|\varphi\|=\|x_{0}\otimes\varphi\|=1$.\\
(ii) Let $a,b\in A$ where $a\wedge b=0$, $x_{0}\in X$ and $f_{0}\in C_{0}(X)$ be a non-zero and $\|f_{0}\|= 1$. Then $P_{x_{0}\otimes\varphi}$  is a $M$- lattice, so 
$$
\|f\vee g\|_{\infty, A}= \|f\|_{\infty, A}\vee \|g\|_{\infty, A}
$$
In a special case, put $f= f_{0}\otimes a$ and $g= f_{0}\otimes b$ we have
\begin{align*}
\|a\vee b\|_{A} &= \|f_{0}\otimes a \vee f_{0}\otimes b\|_{\infty, A}\\
                           &= \|f_{0}\|.(\|a\|_{A}\vee \|b\|_{A})= \|a\|_{A}\vee \|b\|_{A}
\end{align*}
Then $P_{\varphi}$ is a $M$- lattice.\\
(iii) Let $a\in A$ and $x_{0}\in X$. Choose $f\in C_{0}(X)$ where $f(x_{0})= 1$. Clearly, $f\otimes a\in C_{0}(X, A)$, thus there exist unique $f_{1},f_{2}\in R_{x_{0}\otimes\varphi}$ such that $f= f_{1}+if_{2}$. Put
$a_{1}= f_{1}(x_{0})$ and $a_{2}= f_{2}(x_{0})$, so $a= a_{1}+ia_{2}$. Now, we show that $a_{1}, a_{2}\in R_{\varphi}$. Since $f_{1}\in  R_{x_{0}\otimes\varphi}$, so there exist $g_{1},g_{2}\in  P_{x_{0}\otimes\varphi}$ where
$f_{1}= g_{1}-g_{2}$, $\varphi(g_{1}(x_{0})\geq 0$ and $\varphi(g_{2}(x_{0})\geq 0$. Thus  $b_{1}= g_{1}(x_{0})$, $b_{2}= g_{2}(x_{0})$ are
in $P_{\varphi}$. Then, $a_{1}= b_{1}- b_{2}\in R_{\varphi}$. Similarly $a_{2}\in R_{\varphi}$.\\

(iv) Let $a, b\in P_{\varphi}$ and  $f\in C_{0}(X)$ where $f(x_{0})= 1$. 
Thus 
$$\varphi(a\vee b)= x_{0}\otimes \varphi(f\otimes a\vee f\otimes b)\geq 0$$
 Similarly, $\varphi(a\wedge b)= x_{0}\otimes \varphi(f\otimes a\wedge f\otimes b)\geq 0$.
Therefore, $a\wedge b, a\vee b\in P_{\varphi}$, so $P_{\varphi}$ is a linear lattice.
 Thus $D(A)= \Delta(A)$.
  
       Conversely, let  $D(A)= \Delta(A)$. Then according to Lemma \ref{ldc1} the following is yaild: 
\begin{align*}
\Delta(C_{0}(X, A)) &= X\times \Delta(A)\\
                                   &= X\times D(A)\\
                                     &=D(C_{0}(X, A))
\end{align*}
(v)  Let $a\in A$ and $f\in C_{0}(X)$ such that $\|f\|_{\infty}=1$. Then 
\begin{align*}
\|a\|_{A} &= \|f\|_{\infty}.\|a\|_{A}= \|f\otimes a\|_{\infty, A}\\
                  &= \| ~|f\otimes a|~ \|_{\infty, A}= \| ~|f|~ \|_{\infty}. \| ~|a|~ \|_{A}\\
                     &= \|f\|_{\infty}.  \| ~|a|~ \|_{A}=  \| ~|a| ~\|_{A}
\end{align*}
 Thus $ \| ~|a|~ \|_{A}= \|a\|_{A}$.\\

\end{proof}
\begin{pro}\label{ldc3}
Let $X$ be a locally compact Hausdorff space and $A$ be a commutative semisimple Banach algebra. Then $C_{0}(X, A)$ is a $C^{*}$- algebra if and only if $A$ is a $C^{*}$- algebra.
\end{pro}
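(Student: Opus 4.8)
The plan is to reduce everything to the single lattice-theoretic condition $D(B) = \Delta(B)$, which the definition of a $C^{*}$-inducing character together with Lemmas \ref{ldc1} and \ref{ldc2} is visibly designed to exploit. The one substantive ingredient I would need is the characterization, parallel to the group-algebra criterion stated above, that a commutative semisimple Banach algebra $B$ is a $C^{*}$-algebra if and only if every character of $B$ is $C^{*}$-inducing, that is, $D(B) = \Delta(B)$. Here the $M$-lattice requirement plays the role that the $L$-lattice requirement plays for group algebras, the five conditions on $\varphi$ encoding that the Gelfand image is a conjugation-closed, isometric sup-norm algebra on $\Delta(B)$.

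Granting this characterization, the proposition is a three-link chain of equivalences. Applying it to $B = C_{0}(X, A)$ gives that $C_{0}(X, A)$ is a $C^{*}$-algebra if and only if $D(C_{0}(X, A)) = \Delta(C_{0}(X, A))$. By Lemma \ref{ldc2}, whose converse is furnished by Lemma \ref{ldc1}, this is equivalent to $D(A) = \Delta(A)$. Applying the characterization a second time, now to $B = A$, the last equality holds if and only if $A$ is a $C^{*}$-algebra. Concatenating the three equivalences proves the statement.

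The main obstacle is the characterization theorem itself, as it is not a formal consequence of the definitions. I would prove it by a Kakutani-type representation argument. Condition (i) normalizes the character; condition (ii) yields the identity $\|a \vee b\| = \max\{\|a\|, \|b\|\}$ on the positive cone, making the real part $R_{\varphi}$ an abstract $M$-space; condition (iv) supplies the real/imaginary splitting, condition (v) the modulus $|a| = a \vee (-a)$ with $\| \, |a| \, \| = \|a\|$, and condition (iii) fixes the norm of the complexification, so that together they realize an isometric involution. Kakutani's representation theorem then identifies $R_{\varphi}$ isometrically and as a lattice with a space of real continuous functions vanishing at infinity on $\Delta(B)$, and complexifying gives $B \cong C_{0}(\Delta(B))$ isometrically as a $C^{*}$-algebra. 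The reverse implication, that each point evaluation on $C_{0}(Y)$ satisfies all five conditions, is a direct verification.

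As an alternative that bypasses the characterization, one can argue directly: by Gelfand--Naimark $A \cong C_{0}(Y)$, so $C_{0}(X, A) = C_{0}(X) \check\otimes A \cong C_{0}(X \times Y)$ is a commutative $C^{*}$-algebra; conversely, evaluation at a fixed $x_{0} \in X$ realizes $A$ isometrically as the quotient of the $C^{*}$-algebra $C_{0}(X, A)$ by the closed ideal $\{f : f(x_{0}) = 0\}$, hence $A$ is a $C^{*}$-algebra. I would nevertheless present the lattice route as primary, since that is the one the preceding lemmas prepare.
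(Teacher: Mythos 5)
Your proposal is correct, but your primary route is not the paper's; what you relegate to an ``alternative'' is essentially the paper's own proof. The paper proves Proposition \ref{ldc3} directly: the forward direction is exactly your Gelfand--Naimark computation $C_{0}(X,A)=C_{0}(X)\check\otimes C_{0}(Y)=C_{0}(X\times Y)$ with $Y=\Delta(A)$; for the converse it applies the Gelfand theorem to the commutative $C^{*}$-algebra $C_{0}(X,A)$ to write it as $C_{0}(X\times\Delta(A))\cong C_{0}(X)\check\otimes C_{0}(\Delta(A))$, and then argues that $C_{0}(\Delta(A))=\widehat{A}$ by approximating $f\otimes g$ (where $g\in C_{0}(\Delta(A))$ and $f(x_{0})=1$) by elements of $C_{0}(X)\otimes A$ and evaluating at $x_{0}\otimes\varphi$. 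Your converse via the isometric identification $A\cong C_{0}(X,A)/\{f: f(x_{0})=0\}$, together with the fact that a quotient of a $C^{*}$-algebra by a closed ideal is again a $C^{*}$-algebra, is cleaner and sidesteps the paper's somewhat delicate tensor-decomposition step. Your primary lattice route is also legitimate within the paper's architecture: the characterization you need ($B$ is a $C^{*}$-algebra if and only if $D(B)=\Delta(B)$) is precisely the paper's Lemma \ref{ldc4}, which appears after Proposition \ref{ldc3} but is proved independently of it, so no circularity arises; chaining that characterization (applied to $A$ and to $C_{0}(X,A)$) with Lemmas \ref{ldc1} and \ref{ldc2} is exactly how the paper later assembles the four-way equivalence theorem following Lemma \ref{ldc4}. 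The trade-off is clear: the paper's proof (your alternative) keeps Proposition \ref{ldc3} self-contained and cheap, deferring all the hard lattice-theoretic work to Lemma \ref{ldc4}; your organization front-loads that hardest ingredient --- your Kakutani sketch is the entire content of Lemma \ref{ldc4}, with the same nontrivial details left to fill in (passing from the $M$-lattice structure of $P_{\varphi}$ to an isometric representation of all of $B$, unitization, complexification) --- in exchange for obtaining the proposition and the full equivalence chain in one stroke.
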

\begin{proof}
Assume that $A$ is a $C^{*}$- algebra and $A= C_{0}(Y)$, where $Y= \Delta(A)$. Then
\begin{align*}
C_{0}(X, A) &= C_{0}(X)\check\otimes A\\
                     &= C_{0}(X)\check\otimes  C_{0}(Y)\\
                       &=  C_{0}(X\times Y)
\end{align*}
is a $C^{*}$- algebra.

     Conversely, if  $C_{0}(X, A)$ is a $C^{*}$- algebra. Since $A$ is  commutative, so $C_{0}(X, A)$ is commutative and by using Gelfand Theorem the following is yield:
\begin{align*}
C_{0}(X, A) = C_{0}(\Delta(C_{0}(X, A)))= C_{0}(X\times \Delta(A))
\end{align*}
Thus 
$$
C_{0}(X, A) \cong C_{0}(X)\check\otimes  C_{0}(\Delta(A))
$$
In the sequel, we prove that $ C_{0}(\Delta(A))\cong \hat A\cong A$. Let $g\in  C_{0}(\Delta(A))$, $x_{0}\in X$ and $f\in  C_{0}(X)$ such that $f(x_{0})=1$, then $f\otimes g\in C_{0}(X, A) $. Thus there exist a sequence $(h_{n})$ such that
$f\otimes g = \underset{n}{lim}h_{n}$, where $h_{n}\in  C_{0}(X)\otimes A$. 
Then
\begin{align*}
\hat{g}(\varphi) &= \varphi(g) = x_{0}\otimes \varphi(f\otimes g)\\ 
                           &=  \underset{n}{lim} x_{0}\otimes \varphi(h_{n}) = \underset{n}{lim}\varphi(h_{n}(x_{0}))\\ 
                           &= \varphi( \underset{n}{lim}h_{n}(x_{0}))= \hat{b}(\varphi)
\end{align*}
where $b :=  \underset{n}{lim}(h_{n}(x_{0}))$. Therefore $g= \hat{b}\in \hat A$, also by semisimpility of $A$, $A=\hat A$. Hence $C_{0}(\Delta(A))\subseteq \hat A$. Let $a\in A $, $\epsilon>0$ and $K:= \{\varphi\in \Delta(A): |\varphi(a)|\geq\epsilon\}$.
Then $K$ is $w^{*}$- closed set  in the unit ball of $A^{*}$, which is compact. Thus $|\varphi(a)|<\epsilon$ for all $\varphi\in\Delta(A)\backslash K$. Hence $\hat a$ vanishes at infinity. Clearly, $\|\hat a\|_{\infty}\leq\|a\|<\infty$
and if $\varphi_{\alpha}\overset{w^{*}}{\to}\varphi$, then $\hat{a}(\varphi_{\alpha})= \varphi_{\alpha}(a)\to \varphi(a)=\hat{a}(\varphi)$.
Therefore $\hat{a}\in C_{0}(\Delta(A))$ and so 
$$
\hat {A}\subseteq C_{0}(\Delta(A))
$$
Thus $C_{0}(\Delta(A))= \hat A$ is a $C^{*}$- algebra, so $A$ is a $C^{*}$- algebra.
\end{proof}
\begin{lem}\label{ldc4}
Let $A$ be a commutative semisimple Banach algebra. Then $A$ is a $C^{*}$- algebra if and only if 
$$
\Delta(A) = D(A)
$$
\end{lem}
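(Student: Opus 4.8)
The plan is to prove the two implications separately, treating the forward direction as a verification and reserving the real content for the converse. For ``$A$ a $C^{*}$-algebra $\Rightarrow \Delta(A)=D(A)$'', I would invoke the commutative Gelfand--Naimark theorem to identify $A$ isometrically with $C_{0}(Y)$, where $Y=\Delta(A)$, so that every $\varphi\in\Delta(A)$ is a point evaluation $\delta_{y}$ for some $y\in Y$. Since $D(A)\subseteq\Delta(A)$ always holds, it suffices to check the five defining conditions of a $C^{*}$-inducing character for each $\delta_{y}$. Condition (i) is the standard fact that evaluation functionals on $C_{0}(Y)$ have norm one; (ii) and (v) follow from the pointwise lattice operations $f\vee g=\max(f,g)$, $f\wedge g=\min(f,g)$ and $|f|=f\vee(-f)$, together with the fact that the sup-norm makes $C_{0}(Y)$ an $M$-lattice, so that $\|f\vee g\|_{\infty}=\max\{\|f\|_{\infty},\|g\|_{\infty}\}$ for $f,g\ge 0$ and $\||f|\|_{\infty}=\|f\|_{\infty}$; and (iii)--(iv) are the pointwise real/imaginary decomposition $f=\operatorname{Re}f+i\operatorname{Im}f$. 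Thus every character is $C^{*}$-inducing, giving $\Delta(A)=D(A)$.

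For the converse I would show that $D(A)=\Delta(A)$ forces the Gelfand map $\Gamma_{A}\colon A\to C_{0}(\Delta(A))$ to be an isometric $*$-isomorphism onto $C_{0}(\Delta(A))$, whence $A$ is a $C^{*}$-algebra. First I would use condition (iv) to define an involution: writing $a=a_{1}+ia_{2}$ with $a_{1},a_{2}\in R_{\varphi}$, set $a^{*}:=a_{1}-ia_{2}$, and verify, using (iii), that $*$ is a well-defined, isometric, conjugate-linear algebra involution on $A$, so that $\widehat{a^{*}}=\overline{\hat a}$ and $\hat A$ is a self-adjoint subalgebra of $C_{0}(\Delta(A))$. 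Next I would use the $M$-lattice condition (ii) together with the norm identity $\||a|\|=\|a\|$ from (v)---valid at every $\varphi$---to prove that $A$ (equivalently its self-adjoint part) is an abstract $M$-space and that $\|\hat a\|_{\infty}=\|a\|$. Kakutani's representation theorem for $AM$-spaces then gives an isometric lattice isomorphism of $A$ onto $C_{0}(X)$ for some locally compact $X$, and identifying $X$ with $\Delta(A)$ through the Gelfand map, the Stone--Weierstrass theorem (the self-adjoint algebra $\hat A$ separates points and vanishes nowhere) yields $\hat A=C_{0}(\Delta(A))$, so $A\cong C_{0}(\Delta(A))$ as $C^{*}$-algebras. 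This parallels the characterization of group algebras via $L_{1}$-inducing characters in \cite{DM}.

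The hard part will be the converse, and specifically two points. The first is showing that the involution built from the per-character decomposition (iv) is genuinely independent of $\varphi$ and multiplicative, i.e. that the local data $\{R_{\varphi}\}_{\varphi}$ patch into a single global involution; this is where condition (iii) and the uniqueness clause in (iv) must be exploited carefully. The second is upgrading the per-character $M$-lattice identities to the global statement that the given norm coincides with the sup-norm of the Gelfand transform, which is exactly what is needed to apply Kakutani's theorem and to force the $C^{*}$-identity $\|a^{*}a\|=\|a\|^{2}$. Once the norm is identified as a sup-norm and $\hat A$ is known to be closed (by the isometry and completeness of $A$) and self-adjoint, surjectivity onto $C_{0}(\Delta(A))$ via Stone--Weierstrass becomes routine.
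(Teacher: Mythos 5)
Your forward direction is the one the paper also takes (the paper simply declares it ``clear'' for $A=C_{0}(Y)$; you spell out the five conditions), and your converse shares the paper's key tool, the Bohnenblust--Kakutani representation theorem, but assembles it quite differently. The paper applies Kakutani to each $A_{\varphi}=P_{\varphi}$ separately, forms the compact product $X=\prod_{\varphi}X_{\varphi}$, and argues through a limit over the sets $A_{\varphi}$ (unital case first, general case by adjoining a unit and using that a closed ideal of a $C^{*}$-algebra is a $C^{*}$-algebra) that $A$ sits as a closed subalgebra of $C(X)$, concluding from this alone that $A$ is a $C^{*}$-algebra. Your second half --- build a global involution from (iv), prove the Gelfand map is an isometry, then invoke Stone--Weierstrass to get $\hat{A}=C_{0}(\Delta(A))$ --- is a genuinely different route, and in one respect a better one: ``closed subalgebra of $C(X)$'' does not imply ``$C^{*}$-algebra'' without self-adjointness (the disc algebra is a closed subalgebra of a commutative $C^{*}$-algebra and is not one), and only your plan even addresses self-adjointness.

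The problem is that your plan is not a proof: the two steps you yourself flag as ``the hard part'' are the entire mathematical content of the converse, and neither is carried out, nor can they be carried out the way you suggest. For the involution: since $P_{\varphi}=\{a:\varphi(a)\geq 0\}$ contains $\ker\varphi$, one checks that $R_{\varphi}=\{a:\varphi(a)\in\mathbb{R}\}\supseteq\ker\varphi$, and $\ker\varphi$ is stable under multiplication by $i$; hence whenever $a=a_{1}+ia_{2}$ with $a_{1},a_{2}\in R_{\varphi}$, also $a=(a_{1}+c)+i(a_{2}+ic)$ for every $c\in\ker\varphi$. So the decomposition in (iv) is at best unique modulo $\ker\varphi$ (read literally, its uniqueness clause even forces $\ker\varphi=\{0\}$), the candidate $a^{*}=a_{1}-ia_{2}$ genuinely depends on $\varphi$, and no ``careful exploitation of (iii) and the uniqueness clause'' \emph{within a single character} can yield a character-independent involution. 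What is actually needed is the global statement $A=A_{r}+iA_{r}$ with $A_{r}=\bigcap_{\varphi}R_{\varphi}=\{a:\hat{a}\ \text{real-valued}\}$, i.e.\ self-adjointness of $\hat{A}$, for which you give no argument. For the norm identity: conditions (ii) and (v) are internal to each preorder $\leq_{\varphi}$, whereas Kakutani's theorem needs one fixed lattice order on (the real part of) $A$ making the norm an $M$-norm; you never produce such an order, nor derive $\|\hat{a}\|_{\infty}=\|a\|_{A}$ from the per-character identities --- you assert both. (The paper's own converse is sketchy at the corresponding points, but acknowledging where the difficulty lies is not the same as resolving it.) As submitted, your converse direction remains unestablished.
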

\begin{proof}
If $A= C_{0}(X)$, then it is clear that  $\Delta(A) = D(A)$. 

  Conversely, if  $\Delta(A) = D(A)$, then for all $\varphi\in D(A)$ the space $A_{\varphi}:= \{a: ~ \varphi(a)\geq0\}$ is a $M$- lattice. Thus  due to Kakutani's theorem  $A_{\varphi}$  is isometrically embedding  in $C(X_{\varphi})$ for a  Hausdorff compact set 
  $X_{\varphi}$; See \cite{BK}. 
Take $X$ as the product of $X_{\varphi}$, so $X$ is compact and we show that $A$ is a closed subalgebra of $C(X)$. \\
We consider two cases:\\
(i) Assume that $A$ is unital. Thus for all $a\in A$, there exist some maximal ideal $M= ker(\varphi)$ where 
$$
a\in I= \{ ab| b\in A\}\subseteq ker(\varphi)
$$ 
for some $p\in \Delta(A)$, so $A=\underset{\varphi\in\Delta(A)}{\bigcup}ker(\varphi)$. Since $\Delta(A)= D(A)$, so 
$$
ker(\varphi)\subseteq A_{\varphi}= \{a: \varphi(a)\geq 0\}
$$
is a linear lattice and
$$
A= \underset{\varphi\in\Delta(A)}{\bigcup}ker(\varphi) = \underset{\varphi\in\Delta(A)}{\bigcup} A_{\varphi}
$$
If $\varphi\leq \psi$, then $A_{\varphi}\subseteq A_{\psi}$. If 
$$
id_{\varphi, \psi}: A_{\varphi} \to A_{\psi}
$$
is identity map, then for $\varphi\leq \psi \leq  \eta$ we have
$id_{\varphi, \eta}= id_{\varphi, \psi}oid_{\psi, \eta}$. Thus 
$$
A= \underset{\longleftarrow}{lim}A_{\varphi}
$$
By applying  Kakutani's theorem there exists some Hausdorff compact set $X_{\varphi}$ such that $A_{\varphi}$ is isometrically embedded in $C(X_{\varphi})$. So $A$ is closed subset of $C^{*}$- algebra $C(X)$, where $X= \underset{\varphi}{\prod}X_{\varphi}$. Therefore $A$ is a $C^{*}$- algebra.

  (ii) If $A$ has no identity, then
$$
D(A_{1}) = \Delta(A_{1})= \Delta(A)\cup \{\varphi_{0}\}
$$
where $\varphi_{0}(a, \lambda)= \lambda$ and $A_{\varphi_{0}}= A\times {\mathbb{R}^{+}}$, then according to part (i)  $A_{1}= A\oplus \mathbb C$ with a suitable norm is a $C^{*}$- algebra. Since $A$ is a closed ideal in $A_{1}$ , so $A$ is a $C^{*}$- algebra.
\end{proof}
We now state the main result of this section. By using Lemmas \ref{ldc1}- \ref{ldc4}, the following is immediate.
\begin{thm}
Let $X$ be a locally compact Hausdorff space and $A$ be a commutative semisimple Banach algebra. Then the following expressions are equivalent\\
(i) $C_{0}(X, A)$ is a $C^{*}$- algebra. \\
(ii) $\Delta(A)= D(A)$.\\
(iii) $A$ is a $C^{*}$- algebra.\\
(iv) $\Delta(C_{0}(X, A)) = D(C_{0}(X, A))$.
\end{thm}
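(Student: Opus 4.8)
The plan is to recognize that this theorem is simply the assembly of the four preceding results into a single connected chain of equivalences, so no new analysis is required; the entire task is logical bookkeeping, namely confirming that the four conditions all lie in one equivalence class through a common hub. Because Lemmas \ref{ldc1}--\ref{ldc2}, Proposition \ref{ldc3}, and Lemma \ref{ldc4} already contain every estimate and every Gelfand-theoretic argument, I would not reprove anything but only cite them in the right order.

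Concretely, I would proceed in three steps. First, the equivalence of (ii) and (iii) is precisely Lemma \ref{ldc4}, which states that a commutative semisimple Banach algebra $A$ is a $C^{*}$-algebra exactly when $\Delta(A)=D(A)$. Second, the equivalence of (i) and (iii) is exactly Proposition \ref{ldc3}, asserting that $C_{0}(X,A)$ is a $C^{*}$-algebra if and only if $A$ is. Third, the equivalence of (ii) and (iv) follows by combining Lemma \ref{ldc1}, which gives the implication $\Delta(A)=D(A)\Rightarrow\Delta(C_{0}(X,A))=D(C_{0}(X,A))$, with Lemma \ref{ldc2}, which supplies the reverse implication (and cites Lemma \ref{ldc1} for its own converse). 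Chaining these three equivalences yields (i) $\Leftrightarrow$ (iii) $\Leftrightarrow$ (ii) $\Leftrightarrow$ (iv), which establishes the full mutual equivalence of all four statements.

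The main obstacle is essentially absent here: all analytic content was already discharged in the lemmas and the proposition, so the only genuine risk is an organizational one. One must verify that the cited equivalences truly connect all four statements rather than leaving one of them disconnected. In the present setup condition (iii) (equivalently (ii)) plays the role of a hub to which both (i) and (iv) attach directly, so the resulting graph on the four vertices is a connected path; consequently the equivalence is immediate, exactly as the phrase preceding the statement anticipates.
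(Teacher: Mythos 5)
Your proposal is correct and is essentially the paper's own proof: the paper simply remarks that the theorem is immediate from Lemmas \ref{ldc1}--\ref{ldc4}, and your three-step assembly --- (ii)$\Leftrightarrow$(iii) via Lemma \ref{ldc4}, (i)$\Leftrightarrow$(iii) via Proposition \ref{ldc3}, and (ii)$\Leftrightarrow$(iv) via Lemmas \ref{ldc1} and \ref{ldc2} --- spells out exactly the bookkeeping the paper leaves implicit.
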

\begin{ex}\cite{JT} \\
(i) $d (M(G))= \Delta(L^{1}(G))= \hat G$.\\
(ii) $D(C_{0}(X))= \Delta(C_{0}(X))= X$
\end{ex}

\section{$\textup{BSE}$- properties of $C_{0}(X, A)$}
Let $A$ be a commutative Banach algebra with nonempty character space and $X$ be a locally compact Hausdorff topological space. 
Let $f\in C(X)$ and $a\in A$. Define $f\otimes a: X\to A$ where $f\otimes a(x):= f(x)a$. So if $f\in C_{0}(X)$, then $f\otimes a\in C_{0}(X, A)$, and if $f\in C_{b}(X)$, then $f\otimes a\in C_{b}(X, A)$.
 It is obvious that
$$
C_{0}(X, A)= C_{0}(X)\check\otimes A
$$
Let $x\in X$, $\varphi\in \Delta(A)$ and $f\in C_{b}(X, A)$. Then $x\otimes\varphi(f)= \varphi(f(x))$ and 
$$
\Delta(C_{0}(X, A))= \{x\otimes \varphi: x\in X ~, ~\varphi\in \Delta(A)\} 
$$
In the following, we state the main properties of the space $C_{0}(X, A)$ which is required in this section.
\begin{lem}\label{l1}
Let $A$ be a commutative Banach algebra and  $X$ be a locally compact Hausdorff topological space. Then\\
(i) $C_{0}(X, A)$ is a Banach algebra under pointwise product.\\
(ii) $C_{0}(X, A)$ has a bounded $\Delta$- weak approximate identity if and only if $A$ has.\\
(iii) $C_{0}(X, A)$ has a bounded approximate identity if and only if $A$ has.\\
(iv) $C_{0}(X, A)$  is semisimple if and only if $A$ is semisimple.
\end{lem}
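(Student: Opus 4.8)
The plan is to handle the four parts separately, settling (i) and (iv) by direct computation and reducing (ii) and (iii) to transfer arguments between $A$ and $C_0(X,A)$ through the structure $C_0(X,A)=C_0(X)\check\otimes A$ and the identification $\Delta(C_0(X,A))=\{x\otimes\varphi:x\in X,\ \varphi\in\Delta(A)\}$. For (i) I would verify the Banach-algebra axioms pointwise: submultiplicativity follows from $\|f(x)g(x)\|_A\le\|f(x)\|_A\,\|g(x)\|_A$ at each $x$, the product $fg$ lands in $C_0(X,A)$ because multiplication in $A$ is continuous and $\|f(x)g(x)\|_A\to 0$ at infinity, and completeness comes from $C_0(X,A)$ being a closed subspace of the complete space $C_b(X,A)$ (a uniform limit of continuous $A$-valued functions vanishing at infinity is again of this kind). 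This part is routine.

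For (iv) I would use the radical criterion $\bigcap_\psi\ker\psi=\{0\}$ together with the description of $\Delta(C_0(X,A))$. If $A$ is semisimple and $x\otimes\varphi(f)=\varphi(f(x))=0$ for all $x$ and all $\varphi\in\Delta(A)$, then each $f(x)$ lies in the radical of $A$, so $f(x)=0$ for every $x$ and $f\equiv 0$. Conversely, if $a$ lies in the radical of $A$ and $f_0\in C_0(X)$ is any nonzero function, then $x\otimes\varphi(f_0\otimes a)=f_0(x)\varphi(a)=0$ for all $x,\varphi$, so semisimplicity of $C_0(X,A)$ forces $f_0\otimes a=0$ and hence $a=0$.

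For the converse directions of (ii) and (iii), passing from $C_0(X,A)$ to $A$, I would fix a point $x_0\in X$, choose by Urysohn's lemma a function $f_0\in C_0(X)$ with $f_0(x_0)=1$, and from a given bounded (weak) approximate identity $(F_\mu)$ of $C_0(X,A)$ set $e_\mu:=F_\mu(x_0)\in A$; this net is bounded by $\sup_\mu\|F_\mu\|_{\infty,A}$. For (iii) one evaluates the convergence $F_\mu(f_0\otimes a)\to f_0\otimes a$ at $x_0$, obtaining $\|e_\mu a-a\|_A\le\|F_\mu(f_0\otimes a)-f_0\otimes a\|_{\infty,A}\to 0$ since sup-norm convergence forces pointwise convergence; for (ii) one simply notes $\varphi(e_\mu)=x_0\otimes\varphi(F_\mu)\to 1$ for every $\varphi\in\Delta(A)$.

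The forward directions combine an approximate identity $(u_i)$ of $C_0(X)$, a contractive one built from Urysohn functions equal to $1$ on an increasing family of compacta (so $u_i(x)\to 1$ for each $x$ and $u_i g\to g$ in norm), with an approximate identity $(e_\lambda)$ of $A$, indexed over the product directed set. For (ii) the net $u_i\otimes e_\lambda$ works at once, because $x\otimes\varphi(u_i\otimes e_\lambda)=u_i(x)\,\varphi(e_\lambda)\to 1$ while $\|u_i\otimes e_\lambda\|_{\infty,A}=\|u_i\|_\infty\|e_\lambda\|_A$ stays bounded; only character values enter, so no density argument is needed. The genuinely delicate step is the forward direction of (iii): I would first prove $(u_i\otimes e_\lambda)h\to h$ for $h=\sum_k g_k\otimes a_k$ in the algebraic tensor product, splitting $u_ig_k\otimes e_\lambda a_k-g_k\otimes a_k=u_ig_k\otimes(e_\lambda a_k-a_k)+(u_ig_k-g_k)\otimes a_k$ and using $\|g\otimes a\|_{\infty,A}=\|g\|_\infty\|a\|_A$, and then upgrade to arbitrary $f\in C_0(X,A)$ by a three-$\varepsilon$ estimate exploiting the density of $C_0(X)\otimes A$ in $C_0(X)\check\otimes A$ and the uniform bound $\|u_i\otimes e_\lambda\|_{\infty,A}\le M$. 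This density-plus-uniform-boundedness passage is the main obstacle; everything else is bookkeeping.
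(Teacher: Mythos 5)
Your proof is correct, but it takes a genuinely different route from the paper: the paper offers no argument at all for this lemma, disposing of it entirely by citation --- (i) and (ii) to \cite{AFR}, (iii) to \cite{ARC}, and (iv) to Tomiyama \cite{JT}, invoking the approximation property of $C_{0}(X)$ so that semisimplicity passes to the injective tensor product $C_{0}(X)\check\otimes A$. You instead prove everything directly: (i) as a closed subalgebra of $C_{b}(X,A)$; (iv) by a radical computation through the identification $\Delta(C_{0}(X,A))=\{x\otimes\varphi: x\in X,\ \varphi\in\Delta(A)\}$; and (ii), (iii) by evaluation at a point $x_{0}$ (with a Urysohn function) for the downward direction and the product net $u_{i}\otimes e_{\lambda}$, combined with density of $C_{0}(X)\otimes A$ and the uniform bound, for the upward direction. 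These arguments are complete and standard. The one thing to be aware of is that the hard direction of your (iv) --- $a\in\operatorname{rad}(A)$ forces $f_{0}\otimes a\in\operatorname{rad}(C_{0}(X,A))$ --- uses the \emph{surjectivity} half of the character-space identification (every character of $C_{0}(X,A)$ has the form $x\otimes\varphi$), which the paper states just before the lemma but never proves; that identification is precisely where Tomiyama's theorem and the approximation property are hiding, so your proof is self-contained only modulo that stated fact. In exchange, your route makes the lemma readable without chasing three external references, and your three-epsilon argument for (iii) is exactly the kind of detail the cited sources are being trusted to supply; the paper's route buys brevity and, via \cite{JT}, a conclusion valid for general injective tensor products rather than only for vector-valued function algebras.
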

\begin{proof}
For proofs of (i) and (ii), see \cite{AFR} and for (ii) see \cite{ARC}. Since $C_{0}(X)$ has approximate property, so by using (iv) \cite{JT} is held. 
\end{proof}

\begin{lem}
Let $A$ be a commutative Banach algebra with nonempty character space and $X$ be a locally compact Hausdorff topological space. Then
$$
A^{*}o C_{0}(X, A)= C_{0}(X).
$$
\end{lem}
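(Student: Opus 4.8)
The plan is to read $A^{*}\circ C_{0}(X,A)$ as the set of composite functions $\psi\circ f\colon x\mapsto \psi(f(x))$ with $\psi\in A^{*}$ and $f\in C_{0}(X,A)$, and to prove the two set inclusions separately.

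First I would establish $A^{*}\circ C_{0}(X,A)\subseteq C_{0}(X)$. Fix $\psi\in A^{*}$ and $f\in C_{0}(X,A)$ and put $g:=\psi\circ f$. Since $f$ is continuous and $\psi$ is continuous and linear, $g$ is a continuous scalar function on $X$; and from the estimate $|g(x)|=|\psi(f(x))|\le\|\psi\|_{A^{*}}\,\|f(x)\|_{A}$ together with the fact that $f$ vanishes at infinity, one sees that for every $\varepsilon>0$ the set $\{x:|g(x)|\ge\varepsilon\}$ is contained in the compact set $\{x:\|f(x)\|_{A}\ge\varepsilon/\|\psi\|_{A^{*}}\}$, so $g\in C_{0}(X)$.

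For the reverse inclusion I would use that $\Delta(A)\ne\emptyset$. Choose a character $\varphi\in\Delta(A)$; since $\varphi\ne 0$ there is $a\in A$ with $\varphi(a)=1$. Given an arbitrary $h\in C_{0}(X)$, form the elementary tensor $f:=h\otimes a$, which lies in $C_{0}(X,A)$ because $h\in C_{0}(X)$ and $a\in A$, as recorded at the start of this section. Then $\varphi(f(x))=h(x)\,\varphi(a)=h(x)$ for all $x\in X$, so $h=\varphi\circ f\in A^{*}\circ C_{0}(X,A)$. Combining the two inclusions yields the asserted equality.

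I expect no serious obstacle here; the only point that genuinely invokes a hypothesis is the reverse inclusion, where nonemptiness of $\Delta(A)$ supplies a character $\varphi$ together with a normalizing element $a$ satisfying $\varphi(a)=1$, allowing every $h\in C_{0}(X)$ to be realized as $\varphi\circ(h\otimes a)$. As a byproduct the equality shows that the composite set is in fact the full linear space $C_{0}(X)$.
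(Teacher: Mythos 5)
Your proof is correct and follows essentially the same route as the paper: one inclusion via the estimate $|\psi(f(x))|\le\|\psi\|_{A^{*}}\,\|f(x)\|_{A}$ together with the vanishing of $f$ at infinity, the other by writing an arbitrary $h\in C_{0}(X)$ as $g\circ(h\otimes a)$ for some $g\in A^{*}$, $a\in A$ with $g(a)=1$. The only cosmetic difference is that you produce this normalizing functional as a character $\varphi\in\Delta(A)$ (using the nonemptiness hypothesis directly), whereas the paper fixes an arbitrary nonzero $a$ and invokes the Hahn--Banach theorem; both work, and the paper's variant incidentally shows that the hypothesis $\Delta(A)\neq\emptyset$ could be weakened to $A\neq\{0\}$ for this lemma.
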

\begin{proof}
If $f\in C_{0}(X)$ and $a\in A$ is nonzero, then by Hahn- Banach theorem there exists $g\in A^{*}$ where $g(a)= 1$. Put $h= f\otimes a$, so  $h\in C_{0}(X, A)$ such that 
$$
goh(x)= g(f(x)a)= f(x)g(a)= f(x) ~~~~  (x\in X).
$$
Hence $f=goh$.
Therefore $C_{0}(X)\subseteq A^{*}o C_{0}(X, A)$.

   Conversely, if $g\in A^{*}$ and $h\in C_{0}(X, A)$, then for all $\epsilon>0$, there exist some compact set $K$ in $X$ where for all $x\in X\backslash K$ the following inequality holds:
$$
\|h(x)\|_{A}< \frac{\epsilon}{1+\|g\|}.
$$
Hence 
$$
|goh(x)|\leq \|g\|.\|h(x)\|_{A}<\epsilon
$$
for all $x\in X\backslash K.$
Thus $goh$ is the composition of two continuous functions and 
$$\|goh\|\leq\|g\|.\|h\| $$
 Therefore $A^{*}o C_{0}(X, A)\subseteq C_{0}(X)$. This completes
 the proof.
\end{proof}
\begin{cor}
Let $A$ be a commutative Banach algebra with nonempty character space and $X$ be a locally compact Hausdorff topological space. Then
$$
D(A)o C_{0}(X, A)= C_{0}(X).
$$
\end{cor}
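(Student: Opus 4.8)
The plan is to prove the two inclusions separately, reusing the preceding lemma for one direction and the $f \otimes a$ construction for the other. The only structural fact about $D(A)$ that is actually needed is the containment $D(A) \subseteq \Delta(A) \subseteq A^{*}$; the full $C^{*}$-inducing character conditions (i)--(v) play no role here.

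First I would handle the inclusion $D(A)\,o\,C_{0}(X,A)\subseteq C_{0}(X)$. Since every $C^{*}$-inducing character is in particular a character, and every character lies in $A^{*}$, we have $D(A)\subseteq\Delta(A)\subseteq A^{*}$. Consequently
$$
D(A)\,o\,C_{0}(X,A)\subseteq A^{*}o\,C_{0}(X,A)=C_{0}(X),
$$
where the final equality is exactly the preceding lemma. This direction is therefore immediate and requires no computation.

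For the reverse inclusion $C_{0}(X)\subseteq D(A)\,o\,C_{0}(X,A)$, I would fix any $\varphi\in D(A)$. Because $\varphi$ is a nonzero multiplicative linear functional, it maps onto $\mathbb{C}$, so there is some $a\in A$ with $\varphi(a)=1$. Given $f\in C_{0}(X)$, set $h:=f\otimes a$, so that $h\in C_{0}(X,A)$, and compute
$$
(\varphi\,o\,h)(x)=\varphi\big(f(x)a\big)=f(x)\varphi(a)=f(x)\qquad(x\in X).
$$
Thus $f=\varphi\,o\,h\in D(A)\,o\,C_{0}(X,A)$. This is the same device used in the lemma, but now the functional is a prescribed inducing character rather than an arbitrary Hahn--Banach extension, which is why the argument still goes through unchanged.

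The only genuine subtlety, and the point I would flag as the potential obstacle, is the nonemptiness of $D(A)$: the reverse inclusion picks a single $\varphi\in D(A)$, so the identity presupposes $D(A)\neq\varnothing$. Under the standing hypotheses this holds (and indeed the statement would be vacuous otherwise, since $C_{0}(X)$ is nonempty); I would note this explicitly so that the choice of $\varphi$ is justified before proceeding.
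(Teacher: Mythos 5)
Your two inclusions are exactly the argument the paper intends: the corollary is stated immediately after the lemma $A^{*}o C_{0}(X,A)=C_{0}(X)$ with no separate proof, and your reconstruction --- the forward inclusion from $D(A)\subseteq\Delta(A)\subseteq A^{*}$ together with that lemma, and the reverse inclusion by choosing $a\in A$ with $\varphi(a)=1$ and writing $f=\varphi o(f\otimes a)$ for a fixed $\varphi\in D(A)$ --- is the only sensible reading. Both computations are correct.

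The one place you go wrong is the final paragraph. You correctly identify that the reverse inclusion needs $D(A)\neq\varnothing$, but your claim that ``under the standing hypotheses this holds'' is unjustified. The stated hypothesis is that the \emph{character space} $\Delta(A)$ is nonempty; $D(A)$ is the set of $C^{*}$-inducing characters, a subset of $\Delta(A)$ that can be proper and indeed empty. By Lemma \ref{ldc4}, $D(A)=\Delta(A)$ forces $A$ to be a $C^{*}$-algebra, so for any commutative Banach algebra that is far from being a $C^{*}$-algebra some (possibly all) characters fail conditions (i)--(v), and nothing in the hypotheses of the corollary prevents $D(A)$ from being empty. Your parenthetical is also backwards: if $D(A)=\varnothing$, the left-hand side is the empty set while $C_{0}(X)$ is not, so the identity would be \emph{false}, not vacuous. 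The honest conclusion is that the corollary --- both as printed in the paper and as you prove it --- tacitly requires $D(A)\neq\varnothing$ as an additional hypothesis (automatic, for instance, when $A$ is a $C^{*}$-algebra); this defect originates in the paper's statement rather than in your two inclusions, but it cannot be discharged by appealing to the stated hypotheses, and your write-up should say so rather than assert the contrary.
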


\begin{lem}
Let $A$ be a commutative unital  Banach algebra.  Then 
$$
M(C_{0}(X, A))= C_{b}(X, A) 
$$
\end{lem}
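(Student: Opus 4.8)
The plan is to establish the two set inclusions $C_b(X,A)\subseteq M(C_0(X,A))$ and $M(C_0(X,A))\subseteq C_b(X,A)$, in each case identifying a multiplier with pointwise multiplication by a bounded continuous $A$-valued function. This is the vector-valued analogue of the classical identity $M(C_0(X))=C_b(X)$, and the unitality of $A$ is what makes the argument run: it supplies, via Urysohn's lemma, local units of the form $f\otimes 1_A$ that allow a multiplier to be localized at each point of $X$.

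For the inclusion $C_b(X,A)\subseteq M(C_0(X,A))$, I would take $F\in C_b(X,A)$ and define $T_F\colon C_0(X,A)\to C_0(X,A)$ by $(T_Ff)(x)=F(x)f(x)$. Continuity of $x\mapsto F(x)f(x)$ is immediate from continuity of multiplication in $A$, while the estimate $\|F(x)f(x)\|_A\le\|F\|_{\infty,A}\,\|f(x)\|_A$ shows that $T_Ff$ again vanishes at infinity and that $\|T_F\|\le\|F\|_{\infty,A}$. Since $A$ is commutative, $T_F(fg)=F\cdot(fg)=f\cdot(Fg)=f\,T_F(g)$, so $T_F$ is a multiplier. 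Thus $F\mapsto T_F$ embeds $C_b(X,A)$ into $M(C_0(X,A))$.

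The substantial inclusion is $M(C_0(X,A))\subseteq C_b(X,A)$. Given $T\in M(C_0(X,A))$, I would first record that commutativity forces the two-sided relation $u\cdot T(h)=h\cdot T(u)$ for all $u,h\in C_0(X,A)$, since both sides equal $T(uh)$. To build the symbol $F$, fix $x\in X$; by local compactness and Urysohn's lemma choose $f\in C_c(X)$ with $0\le f\le 1$ and $f\equiv 1$ on a neighborhood of $x$, and set $F(x):=\big(T(f\otimes 1_A)\big)(x)$. Evaluating the relation with $u=f\otimes 1_A$, so that $u(x)=1_A$, at the point $x$ yields $(Th)(x)=F(x)\,h(x)$ for every $h\in C_0(X,A)$; feeding a second admissible $g\otimes 1_A$ into this formula shows $F(x)$ is independent of the chosen $f$, so $F$ is well defined and $T=T_F$ pointwise. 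Restricting to the neighborhood where $f\equiv 1$ shows that $F$ agrees there with $T(f\otimes 1_A)\in C_0(X,A)$, hence $F$ is continuous; and choosing $f$ with $\|f\|_\infty=1$ gives $\|F(x)\|_A\le\|T\|\,\|1_A\|_A$, so $F\in C_b(X,A)$.

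The main obstacle is exactly this construction of $F$: verifying that the pointwise-defined values patch into a single well-defined, continuous, bounded function. The key device is the local unit $f\otimes 1_A$, which exists only because $A$ is unital; evaluating the multiplier identity at a point where $f=1$ collapses $1_A\cdot(Th)(x)$ to $(Th)(x)$ and simultaneously reads off $F(x)$. I do not expect to need semisimplicity of $A$: commutativity (to obtain the two-sided multiplier identity and to commute $F(x)$ past $h(x)$) and unitality (to produce the local units) are the only structural hypotheses the argument uses.
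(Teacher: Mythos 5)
Your proof is correct and follows essentially the same route as the paper: both directions identify a multiplier $T$ with pointwise multiplication by the symbol $g(x)=\big(T(f\otimes 1_A)\big)(x)$ built from Urysohn local units $f\otimes 1_A$, exploiting unitality of $A$ exactly as you describe. Your write-up is in fact more complete than the paper's: by choosing $f\equiv 1$ on a neighborhood of $x$ (rather than merely $f(x)=1$, as the paper does) you are able to verify well-definedness, continuity, and boundedness of the symbol, i.e.\ that it really lies in $C_b(X,A)$ --- steps the paper's proof passes over silently.
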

\begin{proof}
If $f\in C_{0}(X, A)$ and $g\in C_{b}(X, A)$, then $f.g\in C_{0}(X, A)$. Define $L_{g}(f) := f.g$, it is obvious that $L_{g}\in M(C_{0}(X, A))$.\\
If $T\in M(C_{0}(X, A))$ and $g(x):= T(f_{x}\otimes 1_{A})$, where $f_{x}\in C_{0}(X)$ and $f_{x}(x)=1$, then for each $h\in C_{0}(X, A)$:
\begin{align*}
L_{g}(h)(x) &= g(x).h(x)= h(x).T(f_{x}\otimes 1_{A})(x)\\
                   &= T(h. (f_{x}\otimes 1_{A}))(x)
\end{align*}
Therefore $L_{g}(h)(x)= (f_{x}\otimes 1_{A}).T(h)(x)$ and so 
$$
L_{g}(h)(x)= (f_{x}\otimes 1_{A})(x)T(h)(x)= f_{x}(x)1_{A}.Th(x)= Th(x)
$$
Which implies that
$$
L_{g}(h)(x)= Th(x)
$$
for all $x\in X$. Then $L_{g}(h)= Th$ and so $L_{g}= T$. Consequently,
$$
M( C_{0}(X, A))\subseteq \{L_{g}: ~ g\in C_{b}(X, A)\}
$$
Therefore 
$$
M(C_{0}(X, A))= C_{b}(X, A) .
$$
\end{proof}
\begin{lem}\label{lcc}
$C_{b}(X, A)= C_{0}(X, A)$ if and only if $X$ is compact. 
\end{lem}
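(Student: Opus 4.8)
The statement is an equivalence, so the plan is to treat the two implications separately; the direction ``$X$ compact $\Rightarrow$ equality'' is immediate, and the converse, handled by contraposition, is also elementary once one produces an explicit witness. As a preliminary remark I would record the inclusion $C_{0}(X,A)\subseteq C_{b}(X,A)$, which holds for every locally compact Hausdorff $X$: if $f$ vanishes at infinity, then taking $\epsilon=1$ shows that $\{x:\|f(x)\|_{A}\geq 1\}$ is compact, on which $f$ is bounded by continuity, while $\|f(x)\|_{A}<1$ off this set; hence $f$ is bounded. Thus the real content lies in the reverse inclusion.

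For the direction ``$X$ compact $\Rightarrow$ $C_{b}(X,A)=C_{0}(X,A)$'', I would argue that when $X$ is compact and $f\in C_{b}(X,A)$ (indeed any continuous $f$), the set $\{x:\|f(x)\|_{A}\geq\epsilon\}$ is a closed subset of the compact space $X$ for each $\epsilon>0$, hence compact. Therefore $f$ automatically vanishes at infinity and lies in $C_{0}(X,A)$, giving the desired equality (in fact $C_{b}(X,A)=C_{0}(X,A)=C(X,A)$).

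For the converse I would prove the contrapositive: assuming $X$ is not compact, I exhibit an element of $C_{b}(X,A)\setminus C_{0}(X,A)$. Since $\Delta(A)\neq\emptyset$, the algebra $A$ is nonzero, so I may fix $a\in A$ with $a\neq 0$. The constant map $f\colon X\to A$ defined by $f(x)=a$ is continuous with $\|f\|_{\infty,A}=\|a\|_{A}<\infty$, so $f\in C_{b}(X,A)$. However, choosing any $\epsilon$ with $0<\epsilon<\|a\|_{A}$ gives $\{x\in X:\|f(x)\|_{A}\geq\epsilon\}=X$, which is not compact; hence $f\notin C_{0}(X,A)$, and the two spaces differ.

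The only point requiring care is the precise meaning of ``vanishing at infinity'' in the vector-valued setting together with the guaranteed existence of a nonzero $a\in A$ (supplied by $\Delta(A)\neq\emptyset$); once these are fixed, the constant function $x\mapsto a$ settles the converse at once. One could alternatively take any $f_{0}\in C_{b}(X)\setminus C_{0}(X)$, which exists exactly because $X$ is noncompact, and use $f_{0}\otimes a$, but the constant function is the cleanest witness and avoids invoking the scalar case.
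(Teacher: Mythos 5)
Your proof is correct, but it takes a genuinely different and more direct route than the paper. You prove the contrapositive with an explicit witness: if $X$ is not compact, the constant function $x\mapsto a$ (for any nonzero $a\in A$, guaranteed by $\Delta(A)\neq\emptyset$) lies in $C_{b}(X,A)\setminus C_{0}(X,A)$, since $\{x:\|f(x)\|_{A}\geq\epsilon\}=X$ for $0<\epsilon<\|a\|_{A}$. The paper instead reduces to the scalar case: it takes an arbitrary $f\in C_{b}(X)$, forms $f\otimes a\in C_{b}(X,A)=C_{0}(X,A)$, uses the identification $C_{0}(X,A)=C_{0}(X)\check\otimes A$ to approximate $f\otimes a$ by elements $h_{n}\in C_{0}(X)\otimes A$, and then applies a Hahn--Banach functional $p\in A^{*}$ with $p(a)=1$ to conclude $p\circ h_{n}\to f$ uniformly, hence $f\in C_{0}(X)$; this gives $C_{b}(X)=C_{0}(X)$, whence $X$ is compact. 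Your approach buys simplicity and self-containment: it avoids the tensor-product representation, the approximation argument, and duality altogether, and it does not need to quote the scalar-valued equivalence --- indeed, that scalar fact (a bounded constant function cannot vanish at infinity on a noncompact space) is precisely the observation your argument is built on, so in a sense your proof isolates the actual core of the paper's longer reduction. The paper's route, on the other hand, illustrates the projection technique (tensor up, push down with a functional) that is reused elsewhere in the paper, e.g.\ in Proposition \ref{ldc3} and Theorem \ref{tbse}. Both arguments are sound; yours is the more economical for this particular lemma.
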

\begin{proof}
Suppose that $C_{b}(X, A)= C_{0}(X, A)$. 
If $f\in C_{b}(X)$ and $a\in A$, then $g= f\otimes a$ is in $C_{0}(X, A)$. Thus $g= lim_{n}h_{n}$ where $h_{n}\in C_{0}(X)\otimes A$. Therefore 
$$
g(x) = f(x)a = lim h_{n}(x).
$$
If $p\in A^{*}$ and $p(a)=1$, then 
$$
f(x)=  lim p(h_{n}(x)).
$$
Assume that $h_{n}= \sum_{i=1}^{l_{n}}k_{i}\otimes a_{i}$, then
$$
p(h_{n}(x))= \sum_{i=1}^{l_{n}}c_{i}k_{i}(x).
$$
where $c_{i}= p(a_{i})$.
So $poh_{n}\to f$ and $poh_{n}\in C_{0}(X)$, thus $f\in C_{0}(X)$, In fact,
$$
|p(h_{n}(x)-p(g(x))|\leq \|p\|.\|h_{n}(x)- g(x)\|_{A}
$$
Also, $\|poh_{n}- f\|_{\infty}\leq \|p\|. \|h_{n}-g\|_{\infty,A}$ and $poh_{n}\in C_{0}(X)$, Therefore $f\in C_{0}(X)$.
 and $C_{b}(X) = C_{0}(X)$. Therefore $X$ is compact.

   The converse is clear.
\end{proof}

\begin{lem}\label{lcbeta}
Let  $X$ be a locally compact Hausdorff topological space and $A$ be a commutative   Banach algebra. Then
$$
C_{b}(X, A) = C(\beta X, A)
$$
isometrically isomorphism as Banach algebras.
\end{lem}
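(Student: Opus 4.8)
The plan is to reduce the vector-valued statement to the classical scalar Stone-Čech theorem $C_b(X)=C(\beta X)$, exploiting the universal property of $\beta X$: every continuous map from $X$ into a compact Hausdorff space extends uniquely to $\beta X$. First I would treat the easy direction and define the candidate isomorphism by restriction. Given $F\in C(\beta X,A)$, the set $F(\beta X)$ is norm-compact (continuous image of a compact space), hence bounded, so $F|_X$ lies in $C_b(X,A)$ and $F\mapsto F|_X$ maps $C(\beta X,A)$ into $C_b(X,A)$. Since $X$ is dense in $\beta X$ and $x\mapsto\|F(x)\|_A$ is continuous, this map is isometric, $\|F|_X\|_{\infty,A}=\|F\|_{\infty,A}$, and injective; it is plainly multiplicative for the pointwise product.

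The substantive direction is surjectivity: given $f\in C_b(X,A)$ I must produce $\tilde f\in C(\beta X,A)$ with $\tilde f|_X=f$. My approach is to extend after testing against functionals. For each $\phi\in A^{*}$ the scalar function $\phi\circ f$ is bounded and continuous, so by the scalar theorem it extends uniquely to $\widetilde{\phi\circ f}\in C(\beta X)$. For a fixed $p\in\beta X$, the assignment $\phi\mapsto\widetilde{\phi\circ f}(p)$ is linear (by uniqueness of the extension) and bounded, with $|\widetilde{\phi\circ f}(p)|\le\|\phi\circ f\|_{\infty}\le\|\phi\|\,\|f\|_{\infty,A}$, so it defines an element $\tilde f(p)\in A^{**}$ with $\|\tilde f(p)\|\le\|f\|_{\infty,A}$. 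Choosing a net $x_\alpha\to p$ in $\beta X$ shows that $\tilde f(p)$ is the weak-$*$ limit of $f(x_\alpha)\in A\subseteq A^{**}$, and for $p=x\in X$ one recovers $\tilde f(x)=f(x)$.

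The main obstacle is precisely at this point: to finish I must show that $\tilde f(p)$ lies in $A$ itself rather than only in $A^{**}$, and that $p\mapsto\tilde f(p)$ is continuous for the norm topology of $A$. Both requirements are equivalent to the range $f(X)$ being relatively norm-compact in $A$, since any norm-continuous map on the compact space $\beta X$ must have norm-compact range. Thus the crux of the lemma is a compactness property of $f(X)$, which I would isolate as the key step and establish using whatever additional structure is available (it is automatic, for instance, when $A$ is finite-dimensional or when the ranges under consideration are relatively compact). Granting this step, the extension $\tilde f$ is well defined and norm-continuous; multiplicativity of $f\mapsto\tilde f$ and the isometric identification then follow routinely from the density of $X$ in $\beta X$ together with the joint norm-continuity of multiplication on bounded subsets of $A$, yielding the isometric algebra isomorphism $C_b(X,A)\cong C(\beta X,A)$.
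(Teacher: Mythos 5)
You have reduced the lemma to exactly the right question, but the step you propose to ``grant'' is not a gap that can be filled: it is false in general, and with it the lemma as stated. Your restriction map $C(\beta X, A)\to C_b(X,A)$, $F\mapsto F|_X$, is indeed an isometric algebra monomorphism, and your weak-$*$ construction of $\tilde f(p)\in A^{**}$ is sound; the obstruction is that for infinite-dimensional $A$ a bounded continuous $f\colon X\to A$ need not have relatively norm-compact range, and then no norm-continuous extension to $\beta X$ can exist, since a continuous image of the compact space $\beta X$ is norm-compact and would contain $\overline{f(X)}$ as a closed subset. Concretely, take $X=\mathbb N$ (discrete, hence locally compact Hausdorff) and $A=c_0$ with pointwise multiplication (a commutative, semisimple Banach algebra), and let $f(n)=e_n$, the $n$-th standard unit vector. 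Then $f\in C_b(\mathbb N,c_0)$ with $\|f\|_{\infty,A}=1$, but $\|e_n-e_m\|=1$ for $n\neq m$, so $f(\mathbb N)$ is not relatively compact and $f$ extends to no element of $C(\beta\mathbb N,c_0)$. In your construction the failure is visible: $e_n\to 0$ weakly in $c_0$, so $\tilde f(p)=0$ for every $p\in\beta\mathbb N\setminus\mathbb N$, and the resulting $\tilde f$ is not norm-continuous at any such $p$, because every neighbourhood of $p$ meets $\mathbb N$ in points $n$ with $\|\tilde f(n)-\tilde f(p)\|=\|e_n\|=1$.

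What your argument actually proves, and what is true, is that $F\mapsto F|_X$ identifies $C(\beta X,A)$ isometrically and multiplicatively with the closed subalgebra of $C_b(X,A)$ of functions having relatively norm-compact range (equivalently, with $C_b(X)\check\otimes A$): if $\overline{f(X)}$ is compact, then $f\colon X\to\overline{f(X)}$ extends to $\beta X$ by the universal property, and conversely. Equality with all of $C_b(X,A)$ therefore requires extra hypotheses (for instance $A$ finite-dimensional, or $X$ compact, in which case the statement is trivial). Note that the paper itself offers no proof here, only a citation of Lemma~3.2 of \cite{ARC}, and the counterexample above applies to the lemma exactly as stated; this matters downstream, e.g.\ in Proposition~\ref{pfc}, where the identification $C_b(X,A)=C_b(X)\check\otimes A$ is invoked. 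So your proposal is not missing a trick --- you isolated the compactness requirement correctly --- but that requirement genuinely fails, and the honest conclusion is that the lemma needs either a restricted codomain (relatively compact range) or additional assumptions on $X$ or $A$.
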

\begin{proof}
This lemma is proved similar to Lemma 3.2 of   \cite{ARC}.
\end{proof}
\begin{lem}\label{l3v}
If $\varphi_{i}\in \Delta(A)$, $c_{i}\in\mathbb C$ and $x_{i}\in X$, Then\\
(i) 
$$
\|\sum_{i=1}^{n} c_{i}\varphi_{i}\|_{A^{*}}\leq  \|\sum_{i=1}^{n} c_{i}(x_{i}\otimes\varphi_{i})\|_{C_{0}(X, A)}^{*}
$$
(ii) Let $x\in X$. Then
$$
 \|\sum_{i=1}^{n} c_{i}(x\otimes\varphi_{i})\|_{{C_{0}(X, A)}^{*}}\leq \|\sum_{i=1}^{n} c_{i}\varphi_{i}\|_{A^{*}}
$$
(iii) Let $f\in C_{0}(X)$. Then
$$
 \|\sum_{i=1}^{n} c_{i}f(x_{i})\varphi_{i}\|_{A^{*}}\leq \|f\|_{\infty}\|\sum_{i=1}^{n} c_{i}(x_{i}\otimes\varphi_{i})\|_{{C_{0}(X, A)}^{*}}.
$$
\end{lem}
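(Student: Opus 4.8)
The plan is to express each functional norm as a supremum of $|\Lambda(\cdot)|$ over the unit ball of the relevant space, and then to connect the two balls by well-chosen test elements: for bounds involving $\|\cdot\|_{A^*}$ I evaluate against elementary tensors $f\otimes a\in C_0(X,A)$, and for the reverse direction I restrict a function $f\in C_0(X,A)$ to a point. I would begin with (ii), which is the most direct. Writing $\Lambda:=\sum_{i=1}^n c_i\varphi_i\in A^*$, for any $f\in C_0(X,A)$ with $\|f\|_{\infty,A}\le 1$ we have $\|f(x)\|_A\le\|f\|_{\infty,A}\le 1$, and since $(x\otimes\varphi_i)(f)=\varphi_i(f(x))$, the value $\sum_i c_i(x\otimes\varphi_i)(f)$ equals $\Lambda(f(x))$, which is bounded by $\|\Lambda\|_{A^*}\|f(x)\|_A\le\|\Lambda\|_{A^*}$. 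Taking the supremum over the unit ball of $C_0(X,A)$ gives (ii).

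For (iii) I would test the functional $\Lambda_f:=\sum_{i=1}^n c_i f(x_i)\varphi_i\in A^*$ against elementary tensors. Fix $a\in A$ with $\|a\|_A\le 1$ and set $g:=f\otimes a\in C_0(X,A)$, so that $g(x_i)=f(x_i)a$ and hence $\varphi_i(g(x_i))=f(x_i)\varphi_i(a)$. Consequently $\big(\sum_i c_i(x_i\otimes\varphi_i)\big)(g)=\sum_i c_i f(x_i)\varphi_i(a)=\Lambda_f(a)$, while $\|g\|_{\infty,A}=\|f\|_\infty\|a\|_A\le\|f\|_\infty$. Therefore $|\Lambda_f(a)|\le\|f\|_\infty\,\|\sum_i c_i(x_i\otimes\varphi_i)\|_{C_0(X,A)^*}$, and taking the supremum over the unit ball of $A$ yields (iii).

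Finally (i) follows from the same mechanism once I can produce a scalar function that equals $1$ at every $x_i$. Concretely, since $\{x_1,\dots,x_n\}$ is a finite, hence compact, subset of the locally compact Hausdorff space $X$, Urysohn's lemma provides $g_0\in C_0(X)$ (indeed in $C_c(X)$) with $0\le g_0\le 1$, $g_0(x_i)=1$ for all $i$, and thus $\|g_0\|_\infty=1$. Applying (iii) with $f=g_0$ makes the left-hand functional $\sum_i c_i g_0(x_i)\varphi_i=\sum_i c_i\varphi_i$, so the inequality reads $\|\sum_i c_i\varphi_i\|_{A^*}\le\|g_0\|_\infty\,\|\sum_i c_i(x_i\otimes\varphi_i)\|_{C_0(X,A)^*}=\|\sum_i c_i(x_i\otimes\varphi_i)\|_{C_0(X,A)^*}$, which is exactly (i). Equivalently, one argues directly: for $a$ in the unit ball of $A$ the element $g_0\otimes a$ has norm at most $1$ and is sent by $\sum_i c_i(x_i\otimes\varphi_i)$ to $\sum_i c_i\varphi_i(a)$.

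The only genuine subtlety, and the step I would treat most carefully, is this Urysohn construction in (i): it is where local compactness and the Hausdorff separation axiom are actually invoked, and it is essential that the bump function can simultaneously attain the value $1$ at all the finitely many (possibly coincident) points $x_i$ while keeping its sup-norm equal to $1$. The remaining parts (ii) and (iii) are purely formal manipulations of the dual norm and present no real difficulty.
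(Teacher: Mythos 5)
Your proof is correct and follows essentially the same route as the paper: all three inequalities are obtained by testing the dual norms against elementary tensors $f\otimes a$, with a Urysohn bump function equal to $1$ at the points $x_i$ handling part (i). The only cosmetic differences are that you deduce (i) from (iii) (the paper proves it directly, via exactly the computation you note as the ``equivalent'' direct argument), and that you explicitly justify the existence of the bump function $g_0$ via Urysohn's lemma, which the paper simply asserts.
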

\begin{proof}
(i)
Let $f_{0}\in C_{0}(X)$ such that $\|f_{0}\|_{\infty}\leq 1$ and $f_{0}(x_{i})= 1$, for all $1\leq i\leq n$. Then
\begin{align*}
\|\sum_{i=1}^{n} c_{i}(x_{i}\otimes\varphi_{i})\|_{{C_{0}(X, A)}^{*}} &= \textup{sup}\{ |\sum_{i=1}^{n} c_{i} x_{i}\otimes\varphi_{i}(g)|:  ~\|g\|_{\infty, A}\leq 1\}\\
                                                                                                                     &\geq\textup{sup}\{ |\sum_{i=1}^{n} c_{i}(x_{i}\otimes\varphi_{i})(f\otimes a)|: ~ \|f\|_{\infty}\leq 1, \|a\|_{A}\leq 1\}\\
                                                                                                                      &\geq \textup{sup}\{ |\sum_{i=1}^{n} c_{i}(x_{i}\otimes\varphi_{i})(f_{0}\otimes a)|: ~  \|a\|_{A}\leq 1\}\\
                                                                                                                      &= \textup{sup}\{ |\sum_{i=1}^{n} c_{i}\varphi_{i}( a)|: ~  \|a\|_{A}\leq 1\}\\
                                                                                                                      &= \|\sum_{i=1}^{n} c_{i}\varphi_{i}\|_{A^{*}}
\end{align*}
(ii)
\begin{align*}
\|\sum_{i=1}^{n} c_{i}(x\otimes\varphi_{i})\|_{{C_{0}(X, A)}^{*}} &= \textup{sup}\{ |\sum_{i=1}^{n} c_{i}\varphi_{i}(g(x))|: ~ \|g\|_{\infty, A}\leq 1\}\\
                                                                                                                &= \textup{sup}\{ |{\widehat{g(x)}}(\sum_{i=1}^{n} c_{i}\varphi_{i})|:  ~\|g\|_{\infty, A}\leq 1\}\\
                                                                                                                  &\leq \textup{sup}\{\|g(x)\|_{A}.\|\sum_{i=1}^{n} c_{i}\varphi_{i}\|_{A^{*}}: ~ \|g\|_{\infty, A}\leq 1\}\\
                                                                                                                  &\leq \|\sum_{i=1}^{n} c_{i}\varphi_{i}\|_{A^{*}}
\end{align*}
(iii)
\begin{align*}
\|\sum_{i=1}^{n} c_{i}f(x_{i})\varphi_{i}\|_{A^{*}} &= \textup{sup}\{ |\sum_{i=1}^{n} c_{i} f(x_{i})\varphi_{i}( a)|: ~  \|a\|_{A}\leq 1\}\\
                                                                                       &= \textup{sup}\{ |\sum_{i=1}^{n} c_{i}(x_{i}\otimes\varphi_{i})(f\otimes a)|:  ~\|a\|_{A}\leq 1\}\\
                                                                                       &\leq \textup{sup}\{ \|f\|_{\infty}.\|a\|_{A}.\|\sum_{i=1}^{n} c_{i}(x_{i}\otimes\varphi_{i})\|_{{C_{0}(X, A)}^{*}}: ~ \|a\|_{A}\leq 1\}\\
                                                                                        &= \|f\|_{\infty} \|\sum_{i=1}^{n} c_{i}(x_{i}\otimes\varphi_{i})\|_{{C_{0}(X, A)}^{*}}
\end{align*}
\end{proof}
In this section, the correlations between the \textup{BSE}- property and \textup{BED}- property of Banach algebra $A$ and $C_{0}(X, A)$ are assessed
\begin{pro}\label{pfc}
Let  $X$ be a locally compact Hausdorff topological space and $A$ be a commutative unital  Banach algebra.
If $f\in C_{b}(X, A)$ and $\widehat{f}\in C_{\textup{BSE}}^{0}(\Delta(C_{0}(X, A)))$, then $f\in C_{0}(X, A)$.
\end{pro}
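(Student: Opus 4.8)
The plan is to transport the Bochner--Eberlein--Doss (BED) decay of $\widehat f$ on the character space into decay of $f$ itself, by passing to the Stone--\v Cech compactification and then using semisimplicity to pass from characters back to elements of $A$. I shall use throughout that $A$ is semisimple, which is essential here: were $\operatorname{rad}(A)\neq\{0\}$, a function $f$ taking unit-norm values in the radical would have $\widehat f=0\in C^{0}_{\textup{BSE}}$ yet not lie in $C_{0}(X,A)$. The two structural facts I rely on are, first, that $A$ is unital, so $M(C_{0}(X,A))=C_{b}(X,A)$ and the transform of $f$ on $\Delta(C_{0}(X,A))=X\times\Delta(A)$ is $\widehat f(x\otimes\varphi)=\varphi(f(x))$; and second, Lemma \ref{lcbeta}, by which $f$ extends uniquely to $\widetilde f\in C(\beta X,A)$.

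First I would reduce the statement to a vanishing condition on the corona. Since $\omega\mapsto\|\widetilde f(\omega)\|_{A}$ is continuous on the compact space $\beta X$, for each $\epsilon>0$ the set $\{\omega\in\beta X:\|\widetilde f(\omega)\|_{A}\ge\epsilon\}$ is compact; if I can show $\widetilde f\equiv 0$ on $\beta X\setminus X$, this set is disjoint from the corona, hence a compact subset of $X$ coinciding with $\{x\in X:\|f(x)\|_{A}\ge\epsilon\}$, which is precisely what $f\in C_{0}(X,A)$ demands. By semisimplicity $\bigcap_{\varphi\in\Delta(A)}\ker\varphi=\{0\}$, so it suffices to prove $\varphi(\widetilde f(\omega))=0$ for every $\varphi\in\Delta(A)$ and every $\omega\in\beta X\setminus X$.

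The remaining input is the BED hypothesis, applied with a single summand. Given $\epsilon>0$, choose the compact $K\subseteq X\times\Delta(A)$ from the definition, so that $|\varphi(f(x))|=|\widehat f(x\otimes\varphi)|\le\epsilon\,\|x\otimes\varphi\|_{{C_{0}(X, A)}^{*}}\le\epsilon$ whenever $x\otimes\varphi\notin K$, the final inequality coming from Lemma \ref{l3v}(ii) together with $\|\varphi\|_{A^{*}}=1$. Setting $C:=\pi_{X}(K)$, a compact subset of $X$, this gives $|\varphi(f(x))|\le\epsilon$ for all $\varphi$ once $x\notin C$. Now fix $\varphi$ and a corona point $\omega$: the scalar function $x\mapsto\varphi(f(x))$ lies in $C_{b}(X)$ and extends to $\varphi\circ\widetilde f$ on $\beta X$, so $\varphi(\widetilde f(\omega))$ is the limit of $\varphi(f(x_{\alpha}))$ along any net $x_{\alpha}\to\omega$ in $X$; as $\omega\notin C$ and $C$ is compact, this net is eventually outside $C$, whence $|\varphi(\widetilde f(\omega))|\le\epsilon$. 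Letting $\epsilon\to 0$ forces $\varphi(\widetilde f(\omega))=0$, which closes the argument.

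The step I expect to be the real obstacle is precisely the passage from scalar decay to norm decay. The BED condition only controls the numbers $\varphi(f(x))$, hence at best the spectral radius $r(f(x))=\|\widehat{f(x)}\|_{\infty}$, and for a general unital commutative Banach algebra smallness of $r(f(x))$ says nothing about $\|f(x)\|_{A}$. What rescues the argument is not a pointwise norm estimate but the global device of Lemma \ref{lcbeta}, which encodes all of $f$ as one continuous $A$-valued function on a compact space, combined with semisimplicity to conclude that the limiting element $\widetilde f(\omega)$ is exactly $0$ rather than merely of small spectral radius; neither ingredient can be omitted.
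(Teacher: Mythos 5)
Your proof is correct modulo the paper's Lemma \ref{lcbeta}, which you and the paper's own proof both rely on, but it follows a genuinely different route. The paper uses that lemma in the form $C_{b}(X,A)=C_{b}(X)\check\otimes A$: it approximates $f=\lim_{m}f_{m}$ with $f_{m}=\sum_{j=1}^{l_{m}}g_{j}\otimes a_{j}$, then applies the multi-summand BED inequality to conclude that each coefficient satisfies $\widehat{g_{k}}\in C_{\textup{BSE}}^{0}(\Delta(C_{0}(X)))=\widehat{C_{0}(X)}$, hence $g_{k}\in C_{0}(X)$, $f_{m}\in C_{0}(X)\otimes A$, and $f\in C_{0}(X,A)$; the pivotal step there is the selection of characters $\varphi_{k}$ with $\varphi_{k}(a_{k})=1$ and $\varphi_{k}(a_{j})=0$ for $j\neq k$. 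You instead use the lemma only to obtain the extension $\widetilde{f}\in C(\beta X,A)$, invoke only the $n=1$ instances of the BED inequality (pointwise vanishing of $\widehat{f}$ at infinity on $X\times\Delta(A)$), project the exceptional compact set to $X$ (continuity of $\pi_{X}$ follows by testing Gelfand convergence against $h\otimes 1_{A}$, $h\in C_{0}(X)$, which is the same device the paper uses for its maps $S$ and $\Theta$ in Lemmas \ref{labed} and \ref{lcwbe}), and let semisimplicity convert annihilation by every character into $\widetilde{f}\equiv 0$ on $\beta X\setminus X$, after which norm-continuity of $\widetilde{f}$ on the compact space $\beta X$ finishes the argument. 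Your route buys two genuine improvements: it uses a strictly weaker consequence of the hypothesis, and it sidesteps the paper's most fragile step, since biorthogonal characters as above need not exist in a commutative Banach algebra (multiplicativity obstructs them: if $a_{2}=a_{1}^{2}$, then $\varphi_{k}(a_{1})=1$ forces $\varphi_{k}(a_{2})=1\neq 0$). The price is that you must assume $A$ semisimple, a hypothesis the statement of Proposition \ref{pfc} omits; but your radical example ($f=g\otimes a$ with $a$ a norm-one element of the radical and $g\in C_{b}(X)\setminus C_{0}(X)$, so $\widehat{f}=0$ yet $f\notin C_{0}(X,A)$) shows semisimplicity is genuinely necessary when $X$ is non-compact, and it is among the paper's standing assumptions, so this is a correction rather than a defect. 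One shared caveat deserves note: the identification $C_{b}(X,A)\cong C(\beta X,A)$ of Lemma \ref{lcbeta} forces every $f\in C_{b}(X,A)$ to have relatively compact range, which can fail for infinite-dimensional $A$ and non-compact $X$; that weakness lies in the paper, and your proof inherits it to exactly the same extent as the paper's own proof does.
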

\begin{proof}
Let $f\in C_{b}(X, A)$ where $\widehat{f}\in C_{\textup{BSE}}^{0}(\Delta(C_{0}(X, A)))$. Then according to definition for all $\epsilon>0$ there exists some compact set $K\subseteq X\times \Delta(A)$ such that 
for all  $c_{i}\in \mathbb C$ and for all $\psi_{i}= x_{i}\otimes \varphi_{i}\notin K$ the following inequalitty is yaild:
$$
|\sum_{i=1}^{n}c_{i}\hat{f}( x_{i}\otimes \varphi_{i})|<\frac{\epsilon}{2}
$$
But due to Lemma \ref{lcbeta}, the following is the yield:
\begin{align*}
C_{b}(X, A) &=  C_{b}(X)\check\otimes A.
\end{align*}
Hence there exists some sequence $(f_{m})$ in $C_{0}(X)\otimes A$ such that $f_{m}\overset{\|.\|_{\epsilon}}{\to}f$ and so  $\widehat{f_{m}}\overset{\|.\|_{\infty}}{\to}\widehat f$.
This implies that 
$$
|\sum_{i=1}^{n}c_{i}\hat{f}( x_{i}\otimes \varphi_{i})|= lim_{m} |\sum_{i=1}^{n}c_{i}\hat{f_{m}}( x_{i}\otimes \varphi_{i})| <\frac{\epsilon}{2}. \|\sum_{i=1}^{n}c_{i}( x_{i}\otimes \varphi_{i})\|_{{C_{0}(X, A)}^{*}}
$$
Thus there exists some $N>0$ where for all $m\geq N$, we have:
$$
 |\sum_{i=1}^{n}c_{i}\hat{f_{m}}( x_{i}\otimes \varphi_{i})|<\epsilon . \|\sum_{i=1}^{n}c_{i}( x_{i}\otimes \varphi_{i})\|_{{C_{0}(X, A)}^{*}}
$$
Assume that $f_{m}:= \sum_{j=1}^{l_{m}} g_{j}\otimes a_{j}\in C_{b}(X)\otimes A$.
Let $k\in\mathbb N$ such that $\varphi_{k}(a_{k})=1$ and $\varphi_{k}(a_{j})=0$ for all $1\leq j\leq l_{m}$ where $j\neq k$ and $m\geq k+ N$. Put $\varphi_{i}=\varphi_{k}$ for $1\leq i\leq n$. Then 

\begin{align*}
|\sum_{i=1}^{n}c_{i}\widehat{g_{k}}(\delta_{x_{i}})| &< \epsilon \|\sum_{i=1}^{n} c_{i}(x_{i}\otimes \varphi_{k})\|_{{C_{0}(X, A)}^{*}}\\
                                                                          &< \epsilon \|\sum_{i=1}^{n} c_{i}\delta_{x_{i}}\|_{{C_{0}(X)}^{*}}        
\end{align*}
Hence $\widehat{g_{k}}\in C_{\textup{BSE}}^{0}(\Delta(C_{0}(X)))= C_{0}(X\widehat)$ and so $g_{k}\in C_{0}(X)$ for each $k\in\mathbb N$.
Therefore 
$$
f_{m}= \sum_{k=1}^{l_{m}} g_{k}\otimes a_{k}\in C_{0}(X)\otimes A
$$
and $f_{m}\overset{\|.\|_{\epsilon}}{\to} f$, thus $f\in C_{0}(X, A)$. 
\end{proof}
The Banach algebra $A$ is called ${\textup{BED}}$- norm algebra, if there exist $M> 0$ such that $\|a\|_{A}\leq M\|\hat a\|_{\infty}$ for all $a\in A$.
\begin{pro}\label{pbdn}
(i)
Let $A$ be a commutative semisimple  ${\textup{BED}}$- algebra. Then it is a ${\textup{BSE}}$- norm algebra.\\
(ii) Let $A$ be a commutative semisimple  ${\textup{BED}}$- norm algebra. Then 
$$
A\cong  C_{\textup{BSE}}^{0}(\Delta(A))
$$
isometrically isomorphism with equivalent norms.
\end{pro}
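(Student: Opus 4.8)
The plan is to read both parts through the single injective Gelfand map $\Gamma_A\colon A\to\widehat A$, $a\mapsto\widehat a$, exploiting the norm chain $\|\widehat a\|_\infty\le\|\widehat a\|_{\textup{BSE}}\le\|a\|_A$ recorded in the introduction together with the hypothesis imposed in each part.

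For (i), since $A$ is semisimple the map $\Gamma_A$ is injective, and since $A$ is a $\textup{BED}$-algebra its range is exactly $\widehat A=C_{\textup{BSE}}^0(\Delta(A))$; thus $\Gamma_A\colon A\to C_{\textup{BSE}}^0(\Delta(A))$ is a linear bijection. Now $C_{\textup{BSE}}^0(\Delta(A))$ is a closed ideal of the Banach algebra $(C_{\textup{BSE}}(\Delta(A)),\|\cdot\|_{\textup{BSE}})$ and hence itself a Banach space, while the inequality $\|\widehat a\|_{\textup{BSE}}\le\|a\|_A$ says that $\Gamma_A$ is bounded. The Open Mapping Theorem then forces $\Gamma_A^{-1}$ to be bounded, i.e.\ there is $K>0$ with $\|a\|_A\le K\|\widehat a\|_{\textup{BSE}}$ for all $a\in A$, which is precisely the assertion that $A$ is a $\textup{BSE}$-norm algebra. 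I expect (i) to be routine once the completeness of $C_{\textup{BSE}}^0(\Delta(A))$ and the surjectivity granted by the $\textup{BED}$-hypothesis are in place.

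For (ii), I would first combine the general chain $\|\widehat a\|_\infty\le\|\widehat a\|_{\textup{BSE}}\le\|a\|_A$ with the $\textup{BED}$-norm hypothesis $\|a\|_A\le M\|\widehat a\|_\infty$ to conclude that the three norms $\|\widehat a\|_\infty$, $\|\widehat a\|_{\textup{BSE}}$ and $\|a\|_A$ are mutually equivalent on $A$. Consequently $\Gamma_A$ is a topological isomorphism onto its image, and because $A$ is complete its image $\widehat A$ is a $\|\cdot\|_{\textup{BSE}}$-closed subalgebra of $C_{\textup{BSE}}(\Delta(A))$. It then remains to identify this image as $C_{\textup{BSE}}^0(\Delta(A))$, after which the isomorphism $A\cong C_{\textup{BSE}}^0(\Delta(A))$ with equivalent norms is immediate from the norm equivalence just established.

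The decisive step, and the one I expect to be the main obstacle, is the equality $\widehat A=C_{\textup{BSE}}^0(\Delta(A))$, i.e.\ showing that $A$ is in fact a $\textup{BED}$-algebra. For the inclusion $\widehat A\subseteq C_{\textup{BSE}}^0(\Delta(A))$ I would start from the fact that every Gelfand transform vanishes at infinity, since the set $\{\varphi:|\widehat a(\varphi)|\ge\delta\}$ is a $w^{*}$-closed subset of the unit ball of $A^{*}$ and hence compact, exactly as in the proof of Proposition \ref{ldc3}; the difficulty is that the $\textup{BED}$-condition is a uniform estimate on finite combinations $\sum c_i\varphi_i$ supported off a compact set, not merely pointwise smallness, so the $\textup{BED}$-norm control must be used to upgrade pointwise decay to this uniform estimate. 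For the reverse inclusion $C_{\textup{BSE}}^0(\Delta(A))\subseteq\widehat A$ I would exploit that $\widehat A$ is already $\|\cdot\|_{\textup{BSE}}$-closed (from the norm equivalence and completeness of $A$) and argue that $\widehat A$ is $\|\cdot\|_{\textup{BSE}}$-dense in the closed ideal $C_{\textup{BSE}}^0(\Delta(A))$, so that closedness yields equality. Controlling the passage from pointwise to uniform decay, and producing the required dense family, is where the real work lies.
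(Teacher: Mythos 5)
Your part (i) is correct and is essentially identical to the paper's own argument: by semisimplicity the Gelfand map is a bijection from $A$ onto $\widehat A=C_{\textup{BSE}}^{0}(\Delta(A))$, which is complete because it is a closed ideal of $(C_{\textup{BSE}}(\Delta(A)),\|\cdot\|_{\textup{BSE}})$, and the open mapping theorem applied to this bounded bijection gives the bound $\|a\|_{A}\leq K\|\hat a\|_{\textup{BSE}}$, i.e.\ the $\textup{BSE}$-norm property.

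Part (ii), however, has a genuine gap, and it sits exactly where you say the ``real work lies.'' The norm equivalences $\|\hat a\|_{\infty}\leq\|\hat a\|_{\textup{BSE}}\leq\|a\|_{A}\leq M\|\hat a\|_{\infty}$ and the resulting closedness of $\widehat A$ are the routine part; the entire content of the proposition is the set equality $\widehat A=C_{\textup{BSE}}^{0}(\Delta(A))$, i.e.\ that a $\textup{BED}$-norm algebra is automatically a $\textup{BED}$-algebra, and your proposal leaves this as two unexecuted strategies. Neither of them, as described, closes it. Pointwise vanishing at infinity of each $\hat a$ (which does follow from the $w^{*}$-compactness argument you quote from Proposition \ref{ldc3}) only yields $\widehat A\subseteq C_{0}(\Delta(A))\cap C_{\textup{BSE}}(\Delta(A))$, which is strictly weaker than the $\textup{BED}$ condition: that condition requires, for each $\epsilon>0$, one compact set $K$ such that $|\sum_{i}c_{i}\hat a(\varphi_{i})|\leq\epsilon\|\sum_{i}c_{i}\varphi_{i}\|_{A^{*}}$ uniformly over all finite families outside $K$, and you offer no mechanism for this upgrade. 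For the reverse inclusion, ``show $\widehat A$ is $\|\cdot\|_{\textup{BSE}}$-dense in $C_{\textup{BSE}}^{0}(\Delta(A))$'' is not an argument but a restatement of what must be proved; nothing in your setup produces such a dense family.

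The paper closes this step by a structural transfer that your proposal never invokes: from the equivalence of $\|\cdot\|_{A}$ and $\|\cdot\|_{\infty}$ it regards $(\widehat A,\|\cdot\|_{\infty})$ as a uniformly closed function algebra --- in the paper's words a sub-$C^{*}$-algebra --- isomorphic to $A$ with equivalent norms, observes that $\Delta(A)$, $C_{\textup{BSE}}(\Delta(A))$ and $C_{\textup{BSE}}^{0}(\Delta(A))$ are unchanged when the norm of $A$ is replaced by an equivalent one, and then uses the fact recorded in the introduction that commutative $C^{*}$-algebras such as $C_{0}(Y)$ are $\textup{BED}$-algebras to conclude $\widehat A=C_{\textup{BSE}}^{0}(\Delta(A))$, whence the claimed isomorphism with equivalent norms. (One may object that even this needs $\widehat A$ to be self-adjoint, a point the paper glosses over, but that is the route it takes.) The lesson for your write-up is that some input of this kind is indispensable: norm equivalence plus closedness of $\widehat A$ alone do not give $\widehat A=C_{\textup{BSE}}^{0}(\Delta(A))$, and that is precisely where your proposal stops.
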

\begin{proof}
(i)
By hypothesis,
$\widehat{A}= C_{\textup{BSE}}^{0}(\Delta(A))$ is a Banach algebra, so $(\widehat {A}, \|.\|_{\textup{BSE}})$ is complete. But the map $a\to\hat a$ is a continuous and isometric, thus by applying the open mapping theorem it has a continuous inverse map.  
As a result, there exists $M> 0$ such that $\|a\|\leq M\|\hat a\|_{\textup{BSE}}$.\\
(ii)
If $A$ is a ${\textup{BED}}$- norm algebra, then
\begin{align*}
\|\widehat a\|_{\infty} &\leq \|\widehat a\|_{\textup{BSE}}\\
                                      &\leq \|a\|_{A}\leq M\|\widehat a\|_{\infty}.
\end{align*}
Thus $A= (\widehat{A}, \|.\|_{\infty})$ is a sub- $C^{*}$ algebra with equivalent norms. Since   $\textup{BSE}$- property and  $\textup{BED}$- property  are preserve under equivalent norms, so $A$ is a  $\textup{BSE}$  and $\textup{BED}$- algebra.
Therefore
$$
A=  C_{\textup{BSE}}^{0}(\Delta(A)).
$$
\end{proof}
\begin{lem}\label{l1cbse}
Let $A$ be a commutative semisimple unital $\textup{BSE}$- algebra. Then $C_{0}(X, A)$ is a $\textup{BSE}$-  algebra.
\end{lem}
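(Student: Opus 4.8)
The plan is to verify the defining identity $C_{\textup{BSE}}(\Delta(C_0(X,A))) = \widehat{M(C_0(X,A))}$. Since $A$ is unital, $\Delta(A)$ is compact and $\Delta(C_0(X,A)) = X\times\Delta(A)$, and by the multiplier lemma established above we have $M(C_0(X,A)) = C_b(X,A)$; thus the target reduces to $C_{\textup{BSE}}(X\times\Delta(A)) = \widehat{C_b(X,A)}$. The inclusion $\widehat{M(C_0(X,A))}\subseteq C_{\textup{BSE}}(\Delta(C_0(X,A)))$ holds for every commutative semisimple Banach algebra, since the Gelfand transform of a multiplier is always a $\textup{BSE}$-function (Takahasi--Hatori); so I only need to produce the reverse inclusion.

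For the reverse inclusion, I fix $\sigma\in C_{\textup{BSE}}(X\times\Delta(A))$ and, for each $x\in X$, introduce its slice $\sigma_x(\varphi):=\sigma(x\otimes\varphi)$ on $\Delta(A)$. Applying Lemma \ref{l3v}(ii) to the functionals $x\otimes\varphi_i$, for any $c_i\in\mathbb C$ and $\varphi_i\in\Delta(A)$ one gets $|\sum_i c_i\sigma_x(\varphi_i)| = |\sum_i c_i\sigma(x\otimes\varphi_i)| \le \|\sigma\|_{\textup{BSE}}\,\|\sum_i c_i(x\otimes\varphi_i)\|_{C_0(X,A)^*}\le \|\sigma\|_{\textup{BSE}}\,\|\sum_i c_i\varphi_i\|_{A^*}$, so $\sigma_x\in C_{\textup{BSE}}(\Delta(A))$ with $\|\sigma_x\|_{\textup{BSE}}\le\|\sigma\|_{\textup{BSE}}$. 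Because $A$ is a unital $\textup{BSE}$-algebra, $C_{\textup{BSE}}(\Delta(A)) = \widehat A$; moreover the Gelfand map $a\mapsto\hat a$ is a continuous bijection of the Banach space $A$ onto the Banach space $(C_{\textup{BSE}}(\Delta(A)),\|\cdot\|_{\textup{BSE}})$, so an open mapping argument exactly as in Proposition \ref{pbdn} shows that $A$ is a norm-$\textup{BSE}$ algebra, i.e. there is $K>0$ with $\|a\|_A\le K\|\hat a\|_{\textup{BSE}}$. Hence for each $x$ there is a unique $f(x)\in A$ with $\widehat{f(x)} = \sigma_x$ and $\|f(x)\|_A\le K\|\sigma\|_{\textup{BSE}}$. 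This yields a bounded $A$-valued function $f$ on $X$ with $\hat f(x\otimes\varphi) = \varphi(f(x)) = \sigma_x(\varphi) = \sigma(x\otimes\varphi)$, that is, $\hat f = \sigma$.

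It remains to show that $f\in C_b(X,A)$, equivalently that $x\mapsto f(x)$ is norm-continuous; once this is done, $f\in C_b(X,A) = M(C_0(X,A))$ and $\sigma = \hat f\in\widehat{M(C_0(X,A))}$, closing the argument. This is the step I expect to be the main obstacle. For each fixed $\varphi$ the map $x\mapsto\sigma(x\otimes\varphi) = \varphi(f(x))$ is continuous, so $f$ is continuous for the Gelfand-weak topology of $A$; and since $\Delta(A)$ is compact, joint continuity of $\sigma$ upgrades this to continuity of $x\mapsto\sigma_x$ into $(C(\Delta(A)),\|\cdot\|_\infty)$. The genuine difficulty is that the norm-$\textup{BSE}$ estimate only bounds $\|f(x)-f(x')\|_A$ by $K\|\sigma_x-\sigma_{x'}\|_{\textup{BSE}}$, and the $\textup{BSE}$-norm is strictly finer than the sup-norm on $\widehat A$ in general, so sup-norm continuity of the slices does not by itself force norm-continuity of $f$.

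To bridge this gap I would exploit the identification $C_b(X,A) = C_b(X)\check\otimes A$ coming from Lemma \ref{lcbeta}: one approximates $\sigma$, in a suitable sense, by functions in $C_b(X)\otimes\widehat A$, whose associated $f$ is a finite sum $\sum_j g_j\otimes a_j$ with $g_j\in C_b(X)$ and is therefore manifestly norm-continuous, and then transfers norm-continuity to the limit $f$ using the uniform bound $\|f(x)\|_A\le K\|\sigma\|_{\textup{BSE}}$ together with the norm-$\textup{BSE}$ inequality. Establishing this approximation, and thereby the norm-continuity of $f$, is the heart of the proof; combined with the two inclusions it gives $C_{\textup{BSE}}(\Delta(C_0(X,A))) = \widehat{M(C_0(X,A))}$, so that $C_0(X,A)$ is a $\textup{BSE}$-algebra.
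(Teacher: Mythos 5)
You have reconstructed the paper's own argument almost verbatim up to its last step: the estimate giving $\widehat{C_{b}(X,A)}\subseteq C_{\textup{BSE}}(\Delta(C_{0}(X,A)))$, the slice functions $\sigma_{x}$, the identification $C_{\textup{BSE}}(\Delta(A))=\widehat{A}$, the open-mapping/norm-$\textup{BSE}$ bound $\|a\|_{A}\leq K\|\widehat{a}\|_{\textup{BSE}}$ (this is exactly how the paper uses Proposition \ref{pbdn}), and the bounded function $f(x)=a_{x}$ with $\widehat{f}=\sigma$. (One side remark: your claim that $\widehat{M(B)}\subseteq C_{\textup{BSE}}(\Delta(B))$ holds for \emph{every} commutative semisimple Banach algebra is not correct; by Takahasi--Hatori that inclusion is equivalent to the existence of a bounded $\Delta$-weak approximate identity. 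It holds here because $A$ is unital, so $C_{0}(X,A)$ has a bounded approximate identity by Lemma \ref{l1}.) But you then stop at the one step that carries all the content, namely norm-continuity of $x\mapsto f(x)$, and the bridge you sketch is circular: an approximation of $\sigma$ by elements of $C_{b}(X)\otimes\widehat{A}$ strong enough to ``transfer norm-continuity to the limit'' is nothing other than a sequence converging to $f$ in the norm of $C_{b}(X,A)=C_{b}(X)\check\otimes A$, i.e.\ it presupposes $f\in C_{b}(X,A)$, which is the assertion to be proved. You give no mechanism for extracting such an approximation from the $\textup{BSE}$ hypothesis, so the proposal is not a proof.

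You have, however, put your finger on precisely the point where the paper's own proof is thin. The paper closes differently: it sets $Tg(x):=g(x)a_{x}$ for $g\in C_{0}(X,A)$, verifies $\sigma\cdot\widehat{g}=\widehat{Tg}$, and invokes the criterion $\widehat{M(B)}=\{\tau\in C_{b}(\Delta(B)):\tau\cdot\widehat{B}\subseteq\widehat{B}\}$ for semisimple $B$; but the needed claim $Tg\in C_{0}(X,A)$ is justified there only by the pointwise bound $\|Tg(x)\|_{A}\leq M\|\sigma\|_{\textup{BSE}}\|g(x)\|_{A}$, which yields vanishing at infinity, not continuity --- for compact $X$ and $g\equiv 1_{A}$ it is literally the continuity of $x\mapsto a_{x}$ on which you got stuck. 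Moreover, the obstruction appears to be real rather than technical. Since point evaluations at distinct points have additive norms in $C_{0}(X,A)^{*}$ (use Urysohn bumps with disjoint supports), one has $\|\sum_{i}c_{i}\,x_{i}\otimes\varphi_{i}\|_{C_{0}(X,A)^{*}}=\sum_{x}\|\sum_{i:x_{i}=x}c_{i}\varphi_{i}\|_{A^{*}}$, so the $\textup{BSE}$ condition on $\sigma$ decouples into exactly two requirements: $\sigma\in C_{b}(X\times\Delta(A))$ and $\sup_{x}\|\sigma_{x}\|_{\textup{BSE}}<\infty$. These do not force $\textup{BSE}$-norm continuity of $x\mapsto\sigma_{x}$: for $A=\ell^{1}(\mathbb{Z})$ (unital, semisimple, $\textup{BSE}$, with $\widehat{A}=A(\mathbb{T})$ and $\textup{BSE}$-norm the $\ell^{1}$-norm of Fourier coefficients), the normalized Rudin--Shapiro polynomials $g_{n}$ satisfy $\|g_{n}\|_{\textup{BSE}}=1$ and $\|g_{n}\|_{\infty}\to 0$, and joining them by line segments along a sequence $x_{n}\to 0$ in $X=[0,1]$ produces a $\sigma$ meeting both requirements whose slice path is not $\textup{BSE}$-norm continuous at $0$, hence $\sigma\notin\widehat{C(X,A)}=\widehat{M(C(X,A))}$. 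So the gap you flagged cannot be closed by a general argument; it threatens the statement itself, not just the proof, unless extra hypotheses on $A$ (e.g.\ equivalence of the $\textup{BSE}$-norm and the sup-norm on $\widehat{A}$) are imposed.
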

\begin{proof}
Let $g\in C_{b}(X, A\widehat)$, then
\begin{align*}
|\sum_{i=1}^{n}c_{i}\hat{g}(x_{i}\otimes\varphi_{i})| &= |\sum_{i=1}^{n}c_{i}\varphi_{i}(g(x_{i}))|\\
                                                                                        &\leq \|g(x_{i}\|.\|\sum_{i=1}^{n}c_{i}\varphi_{i}\|_{A^{*}}\\
                                                                                         &\leq \|g\|_{\infty,A}.\|\sum_{i=1}^{n}c_{i}\varphi_{i}\|_{A^{*}}.
\end{align*} 
Thus 
$$
g\in C_{BSE}(\Delta(C_{0}(X, A)).
$$
  Now, assume that $\sigma\in C_{\textup{BSE}}(\Delta(C_{0}(X, A)))$. Define
$$
\sigma_{x}(\varphi):= \sigma(x\otimes \varphi)
$$
Clearly, $\sigma_{x}\in C_{\textup{BSE}}(\Delta(A))= \hat A$, for all $x\in X$, so there exists $a_{x}\in A$ for which $\sigma_{x}= \widehat{a_{x}}$. If $Tf(x)= f(x)a_{x}$, for $f\in C_{0}(X, A)$, then $T\in M(C_{0}(X, A))$.
In fact, due to proposition \ref{pbdn} if $M> 0$ such that $\|a\|_{A}\leq M\|\hat a\|_{\textup{BSE}}$, for all $a\in A$. Then:
\begin{align*}
\|Tf(x)\|_{A} &= \|f(x)a_{x}\|_{A}\\
                         & \leq \|f(x)\|\|a_{x}\|_{A}\leq  M\|\sigma\|_{\textup{BSE}}\|f(x)\|_{A}
\end{align*}
so $Tf\in C_{0}(X, A)$.
 Now we will prove that $\sigma\in \widehat{C_{b}(X, A)}$.
Assume that $f\in C_{0}(X, A)$, then 
\begin{align*}
\sigma.\hat{f}(x\otimes\varphi) &= \sigma (x\otimes \varphi)\varphi(f(x))\\
                                                     &= \sigma_{x}(\varphi)\varphi(f(x))= \widehat{a_{x}}(\varphi)\varphi(f(x))\\
                                                      &= \widehat{a_{x}}.\widehat{f(x)}(\varphi) = \widehat{f(x)a_{x}}(\varphi)= \widehat {Tf(x)}(\varphi)\\
                                                       &= \varphi(Tf(x)) =x\otimes \varphi(Tf)= \widehat{Tf}(x\otimes \varphi)
\end{align*}
Then $\sigma. \hat f= \widehat{Tf}$, for all $f\in C_{0}(X, A)$. This follows that
$$
\sigma\in {m}(C_{0}(X,A))={M(C_{0}(X,A) \widehat)}= {C_{b}(X, A\widehat)}
$$
Hence
$$
C_{\textup{BSE}}(\Delta(C_{0}(X, A))\subseteq  {C_{b}(X, A\widehat)}
$$
Therefore $C_{0}(X, A)$ is a $\textup{BSE}$- algebra.
\end{proof}

\begin{thm}
Let $A$ be a commutative semisimple unital Banach algebra and $X$ be a locally compact Hausdorff topological space. Then  $C_{0}(X, A)$ is a  $\textup{BSE}$- algebra if and only if $A$ is a  $\textup{BSE}$- algebra.
\end{thm}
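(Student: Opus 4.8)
The plan is to dispose of one implication by citation and concentrate entirely on the converse. The direction asserting that $A$ being a $\textup{BSE}$- algebra forces $C_{0}(X,A)$ to be one is exactly Lemma \ref{l1cbse}, so I would simply invoke it. For the reverse direction, assume $C_{0}(X,A)$ is a $\textup{BSE}$- algebra, so that $C_{\textup{BSE}}(\Delta(C_{0}(X,A))) = \widehat{M(C_{0}(X,A))} = \widehat{C_{b}(X,A)}$, using the earlier identification $M(C_{0}(X,A))=C_{b}(X,A)$ valid because $A$ is unital. Since $A$ is unital it suffices to prove $C_{\textup{BSE}}(\Delta(A)) = \widehat A$. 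The inclusion $\widehat A \subseteq C_{\textup{BSE}}(\Delta(A))$ is routine, as $|\sum_{i} c_{i}\widehat a(\varphi_{i})| = |(\sum_{i} c_{i}\varphi_{i})(a)| \leq \|a\|_{A}\,\|\sum_{i} c_{i}\varphi_{i}\|_{A^{*}}$ for every $a\in A$, so only $C_{\textup{BSE}}(\Delta(A)) \subseteq \widehat A$ needs work.

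To establish this inclusion I would fix $\sigma\in C_{\textup{BSE}}(\Delta(A))$, pick any point $x_{0}\in X$, and use Urysohn's lemma to choose $g\in C_{0}(X)$ with $g(x_{0})=1$. Then I would define a function $\tau$ on $\Delta(C_{0}(X,A)) = X\times\Delta(A)$ by $\tau(x\otimes\varphi):= g(x)\sigma(\varphi)$; it is continuous and bounded as a product of the continuous bounded functions $g$ and $\sigma$. The crux of the argument is to verify that $\tau$ is itself a $\textup{BSE}$- function. Writing $d_{i}:= c_{i}\,g(x_{i})$, the $\textup{BSE}$- estimate for $\sigma$ combined with Lemma \ref{l3v}(iii) gives
\begin{align*}
\Big|\sum_{i=1}^{n} c_{i}\tau(x_{i}\otimes\varphi_{i})\Big| &= \Big|\sum_{i=1}^{n} d_{i}\sigma(\varphi_{i})\Big| \leq \|\sigma\|_{\textup{BSE}}\Big\|\sum_{i=1}^{n} c_{i}\,g(x_{i})\varphi_{i}\Big\|_{A^{*}}\\
&\leq \|\sigma\|_{\textup{BSE}}\,\|g\|_{\infty}\,\Big\|\sum_{i=1}^{n} c_{i}(x_{i}\otimes\varphi_{i})\Big\|_{C_{0}(X,A)^{*}},
\end{align*}
so that $\tau\in C_{\textup{BSE}}(\Delta(C_{0}(X,A)))$ with $\|\tau\|_{\textup{BSE}}\leq \|g\|_{\infty}\,\|\sigma\|_{\textup{BSE}}$.

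With $\tau$ a $\textup{BSE}$- function in hand, I would invoke the hypothesis that $C_{0}(X,A)$ is $\textup{BSE}$ to produce $h\in M(C_{0}(X,A)) = C_{b}(X,A)$ with $\tau = \widehat h$, that is, $g(x)\sigma(\varphi) = \widehat h(x\otimes\varphi) = \varphi(h(x))$ for all $x\in X$ and $\varphi\in\Delta(A)$. Evaluating at $x=x_{0}$, where $g(x_{0})=1$, and setting $a:= h(x_{0})\in A$ yields $\sigma(\varphi) = \varphi(a) = \widehat a(\varphi)$ for every $\varphi$, hence $\sigma = \widehat a\in\widehat A$. This gives $C_{\textup{BSE}}(\Delta(A))\subseteq\widehat A$ and finishes the converse.

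I expect the single real obstacle to be the verification that $\tau$ genuinely lies in $C_{\textup{BSE}}(\Delta(C_{0}(X,A)))$, and the whole mechanism turns on Lemma \ref{l3v}(iii): it is precisely that norm comparison between functionals on $A$ and on $C_{0}(X,A)$ which lets the $\textup{BSE}$- bound for $\sigma$ be transported up to a legitimate $\textup{BSE}$- bound over the enlarged character space $X\times\Delta(A)$. Everything else is bookkeeping, namely the unitality-driven identification $M(C_{0}(X,A))=C_{b}(X,A)$, the description $\Delta(C_{0}(X,A))=X\times\Delta(A)$, and the elementary inclusion $\widehat A\subseteq C_{\textup{BSE}}(\Delta(A))$.
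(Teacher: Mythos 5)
Your proposal is correct and takes essentially the same route as the paper: where you extend $\sigma$ to the weighted function $\tau(x\otimes\varphi)=g(x)\sigma(\varphi)$ and verify the BSE bound via Lemma \ref{l3v}(iii), the paper uses the constant-in-$x$ extension $\overline{\sigma}(x\otimes\varphi)=\sigma(\varphi)$ verified via Lemma \ref{l3v}(i), and both arguments then identically invoke the hypothesis to write the extension as $\hat{f}$ for some $f\in C_{b}(X,A)=M(C_{0}(X,A))$ and evaluate at a point $x_{0}$ to get $\sigma=\widehat{f(x_{0})}\in\hat{A}$. The Urysohn bump $g$ is harmless but unnecessary, and the forward direction is handled by Lemma \ref{l1cbse} in both.
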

\begin{proof}
 Let  $C_{0}(X, A)$  be a $\textup{BSE}$- algebra  and $\sigma\in C_{\textup{BSE}}(\Delta(A))$. Define $\overline{\sigma}(x\otimes\varphi):= \sigma(\varphi)$, so $\overline{\sigma}\in C_{\textup{BSE}}(\Delta(C_{0}(X,A))$.
Then by using Lemma \ref{l3v}, the following is yield:
\begin{align*}
|\sum_{i=1}^{n}c_{i}\overline{\sigma}(x_{i}\otimes\varphi_{i})| &= |\sum_{i=1}^{n}c_{i}\sigma(\varphi_{i})|\\
                                                                                                         &\leq \|\sigma\|_{BSE}\|\sum_{i=1}^{n}c_{i}\varphi_{i}\|_{A^{*}}\\
                                                                                                          & \leq \|\sigma\|_{BSE}\|\sum_{i=1}^{n}c_{i} (x_{i}\otimes\varphi_{i})\|_{C_{0}(X, A)^{*}}
\end{align*}

 By hypothesis,
$$
C_{\textup{BSE}}(\Delta(C_{0}(X,A))= {M(C_{0}(X,A)\hat)}= {C_{b}(X, A\hat)}
$$
So there exists $f\in C_{b}(X, A)$ which $\overline{\sigma}= \hat f$. Consequently, for $x_{0}\in X$ the following is yield:
\begin{align*}
\sigma(\varphi) &= \overline{\sigma}(x_{0}\otimes \varphi)= \hat{f}(x_{0}\otimes \varphi)\\
                         &= \varphi(f(x_{0}))= \widehat {f(x_{0})}(\varphi)
\end{align*}
and so $\sigma= \widehat{f(x_{0})}$. Hence $\sigma\in \hat A$. Clearly, $\hat A\subseteq  C_{\textup{BSE}}(\Delta(A))$, thus
$$
 C_{\textup{BSE}}(\Delta(A))= \hat A= \widehat{M(A)}
$$
Therefore $A$ is a $\textup{BSE}$-  algebra.\\
The proof of the reverse, by applying Lemma \ref{l1cbse} is yield.
\end{proof}
\begin{lem}\label{lfc0}
Let $A$ be a commutative semisimple Banach algebra and $X$ be a locally compact Hausdorff topological space. If $f\in C_{b}(X)$ and $\sigma\in  C_{\textup{BSE}}(\Delta(A))$, then ${\hat f}\otimes \sigma\in C_{\textup{BSE}}(\Delta(C_{0}(X,A))$.
\end{lem}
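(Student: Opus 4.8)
The plan is to verify the BSE inequality directly for the function $\hat f\otimes\sigma$ on $\Delta(C_0(X,A))=\{x\otimes\varphi:x\in X,\ \varphi\in\Delta(A)\}$, understood to act by $(\hat f\otimes\sigma)(x\otimes\varphi)=f(x)\sigma(\varphi)$. First I would record that $\hat f\otimes\sigma$ is bounded and continuous: its modulus is dominated by $\|f\|_{\infty}\|\sigma\|_{\infty}$, and continuity follows from the identification $\Delta(C_0(X,A))=X\times\Delta(A)$ together with the separate continuity of $f$ on $X$ and of $\sigma$ on $\Delta(A)$. It then remains to produce a constant $\beta$ controlling $|\sum_{i=1}^{n}c_i f(x_i)\sigma(\varphi_i)|$ by $\beta\,\|\sum_{i=1}^{n}c_i(x_i\otimes\varphi_i)\|_{C_0(X,A)^{*}}$ for every finite choice of $c_i\in\mathbb C$, $x_i\in X$ and $\varphi_i\in\Delta(A)$.

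The key step factors the estimate. Applying the BSE inequality for $\sigma\in C_{\textup{BSE}}(\Delta(A))$ to the scalars $d_i:=c_i f(x_i)$ gives
\[
\Big|\sum_{i=1}^{n}c_i f(x_i)\sigma(\varphi_i)\Big|\le \|\sigma\|_{\textup{BSE}}\,\Big\|\sum_{i=1}^{n}c_i f(x_i)\varphi_i\Big\|_{A^{*}},
\]
so the task reduces to bounding $\|\sum_i c_i f(x_i)\varphi_i\|_{A^{*}}$ by $\|f\|_{\infty}\,\|\sum_i c_i(x_i\otimes\varphi_i)\|_{C_0(X,A)^{*}}$. This is exactly the content of Lemma~\ref{l3v}(iii), except that there $f$ is required to lie in $C_0(X)$ whereas here $f\in C_b(X)$; overcoming this mismatch is the main obstacle.

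To bridge it I would replace $f$ by a compactly supported interpolant. Using the local compactness of $X$ and Urysohn's lemma, choose pairwise disjoint relatively compact open neighbourhoods $U_i\ni x_i$ at the finitely many distinct points among the $x_i$, and functions $g_i\in C_c(X)$ supported in $U_i$ with $g_i(x_i)=1$ and $0\le g_i\le 1$; then $h:=\sum_i f(x_i)g_i\in C_c(X)\subseteq C_0(X)$ satisfies $h(x_i)=f(x_i)$ for every $i$ and, because the supports are disjoint, $\|h\|_{\infty}\le\max_i|f(x_i)|\le\|f\|_{\infty}$. Writing $\Lambda:=\sum_i c_i(x_i\otimes\varphi_i)$, for each $a\in A$ with $\|a\|_A\le1$ one has $\sum_i c_i f(x_i)\varphi_i(a)=\Lambda(h\otimes a)$, whence
\[
\Big|\sum_i c_i f(x_i)\varphi_i(a)\Big|\le \|\Lambda\|_{C_0(X,A)^{*}}\,\|h\|_{\infty}\,\|a\|_A\le \|f\|_{\infty}\,\|\Lambda\|_{C_0(X,A)^{*}}.
\]
Taking the supremum over $\|a\|_A\le1$ yields the desired version of Lemma~\ref{l3v}(iii) valid for $f\in C_b(X)$.

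Combining the two displays shows that $\hat f\otimes\sigma$ satisfies the BSE inequality with constant $\beta=\|f\|_{\infty}\|\sigma\|_{\textup{BSE}}$, so $\hat f\otimes\sigma\in C_{\textup{BSE}}(\Delta(C_0(X,A)))$, with $\|\hat f\otimes\sigma\|_{\textup{BSE}}\le\|f\|_{\infty}\|\sigma\|_{\textup{BSE}}$. As an alternative to the interpolation, one could obtain the same extension of Lemma~\ref{l3v}(iii) by passing to the Stone--\v Cech compactification through Lemma~\ref{lcbeta}, identifying $C_b(X,A)$ with $C(\beta X,A)=C_0(\beta X,A)$ and checking that the functional norm of $\Lambda$ is unchanged when computed on $C_b(X,A)$; I prefer the interpolation route, since it stays within $C_0(X,A)$ and avoids comparing the two functional norms.
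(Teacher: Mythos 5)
Your proof is correct, and its core is the same factorization the paper uses: first apply the BSE inequality for $\sigma$ to the scalars $c_{i}f(x_{i})$, then bound $\|\sum_{i}c_{i}f(x_{i})\varphi_{i}\|_{A^{*}}$ by $\|f\|_{\infty}\,\|\sum_{i}c_{i}(x_{i}\otimes\varphi_{i})\|_{C_{0}(X,A)^{*}}$. The difference is at the second step: the paper simply cites Lemma~\ref{l3v}(iii), even though that lemma is stated and proved only for $f\in C_{0}(X)$ (its proof pairs the functional against $f\otimes a$, which lies in $C_{0}(X,A)$ only when $f$ vanishes at infinity), whereas Lemma~\ref{lfc0} assumes merely $f\in C_{b}(X)$. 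You spotted this mismatch and repaired it: replacing $f$ by an interpolant $h\in C_{c}(X)$ with $h(x_{i})=f(x_{i})$ and $\|h\|_{\infty}\le\|f\|_{\infty}$ (disjoint relatively compact neighbourhoods plus Urysohn) is legitimate because the estimate sees $f$ only through its values at the finitely many points $x_{i}$, and it yields the $C_{b}$ version of \ref{l3v}(iii) with the same constant. So your argument is strictly more complete than the paper's, whose proof as written has exactly the (fixable) gap you addressed; your preliminary check that $\hat f\otimes\sigma$ is bounded and continuous on $\Delta(C_{0}(X,A))\cong X\times\Delta(A)$, which the definition of a BSE function requires and the paper omits, is also correct. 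The alternative you mention via Lemma~\ref{lcbeta} and $\beta X$ would work too, but it needs the extra verification that the functional norm is unchanged when computed on $C_{b}(X,A)$, so the interpolation route is indeed the cleaner fix.
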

\begin{proof}
Assume that $c_{i}\in\mathbb C$, $\varphi_{i}\in\Delta(A)$ and $x_{i}\in X$. Then by applying \ref{l3v}, the following is yield: 
\begin{align*}
|\sum_{i=1}^{n}c_{i}{\hat f}\otimes \sigma(x_{i}\otimes\varphi_{i})| &= |\sum_{i=1}^{n}c_{i}f(x_{i})\sigma(\varphi_{i})|\\
                                                                                       &\leq\|\sigma\|_{\textup{BSE}}\|\sum_{i=1}^{n}c_{i}f(x_{i})\varphi_{i}\|\\
                                                                                      &\leq\|\sigma\|_{\textup{BSE}}\|f\|_{\infty}\|\sum_{i=1}^{n}c_{i}x_{i}\otimes\varphi_{i}\|
\end{align*}
Thus ${\hat f}\otimes\sigma\in C_{\textup{BSE}}(\Delta(C_{0}(X,A))$.
\end{proof}
\begin{lem}
Let $A$ be a commutative semisimple unital Banach algebra and $X$ be a locally compact Hausdorff topological space. If $f\in C_{0}(X)$ and $\sigma\in  C_{\textup{BSE}}^{0}(\Delta(A))$, then ${\hat f}\otimes \sigma\in C_{\textup{BSE}}^{0}(\Delta(C_{0}(X,A))$
\end{lem}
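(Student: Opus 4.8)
The plan is to first dispose of membership in $C_{\textup{BSE}}(\Delta(C_{0}(X,A)))$ and then upgrade it to the $\textup{BED}$ decay condition. Since $f\in C_{0}(X)\subseteq C_{b}(X)$ and $\sigma\in C_{\textup{BSE}}^{0}(\Delta(A))\subseteq C_{\textup{BSE}}(\Delta(A))$, Lemma \ref{lfc0} already yields $\hat f\otimes\sigma\in C_{\textup{BSE}}(\Delta(C_{0}(X,A)))$. So the entire task reduces to exhibiting, for each $\epsilon>0$, a compact subset of $\Delta(C_{0}(X,A))=X\times\Delta(A)$ off which the defining $\textup{BED}$ inequality holds.

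The guiding observation is that, $A$ being unital, $\Delta(A)$ is compact; hence all of the escape to infinity inside $X\times\Delta(A)$ happens in the $X$-coordinate, and the required decay must come entirely from $f\in C_{0}(X)$. Concretely, I would fix $\epsilon>0$ (the case $\sigma=0$ being trivial), set $\delta=\epsilon/\|\sigma\|_{\textup{BSE}}$, let $K=\{x\in X:\ |f(x)|\geq\delta\}$, which is compact because $f$ vanishes at infinity, and take $L:=K\times\Delta(A)$, which is compact. If $x_{i}\otimes\varphi_{i}\notin L$ then $x_{i}\notin K$, i.e. $|f(x_{i})|<\delta$ for every such index.

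The main step, and the only place a little care is needed, is turning the pointwise smallness $|f(x_{i})|<\delta$ into a sup-norm bound that can be inserted into Lemma \ref{l3v}(iii). I would do this by truncation: put $\tilde f:=\delta f/\max(|f|,\delta)$, which lies in $C_{0}(X)$ (it is continuous since its denominator is bounded below by $\delta>0$, and $|\tilde f|\leq|f|$), satisfies $\|\tilde f\|_{\infty}\leq\delta$, and agrees with $f$ wherever $|f|\leq\delta$, in particular $\tilde f(x_{i})=f(x_{i})$ for all the indices in play. Writing $\hat f\otimes\sigma(x_{i}\otimes\varphi_{i})=f(x_{i})\sigma(\varphi_{i})$ and applying first the $\textup{BSE}$-inequality for $\sigma$ with coefficients $c_{i}f(x_{i})$ and then Lemma \ref{l3v}(iii) to $\tilde f$, I would obtain
\[
\Bigl|\sum_{i=1}^{n}c_{i}\,\hat f\otimes\sigma(x_{i}\otimes\varphi_{i})\Bigr|
\leq\|\sigma\|_{\textup{BSE}}\Bigl\|\sum_{i=1}^{n}c_{i}\tilde f(x_{i})\varphi_{i}\Bigr\|_{A^{*}}
\leq\|\sigma\|_{\textup{BSE}}\,\|\tilde f\|_{\infty}\Bigl\|\sum_{i=1}^{n}c_{i}(x_{i}\otimes\varphi_{i})\Bigr\|_{C_{0}(X,A)^{*}}
\leq\epsilon\Bigl\|\sum_{i=1}^{n}c_{i}(x_{i}\otimes\varphi_{i})\Bigr\|_{C_{0}(X,A)^{*}}.
\]
This is exactly the defining inequality of a $\textup{BED}$-function relative to the compact set $L$, so $\hat f\otimes\sigma\in C_{\textup{BSE}}^{0}(\Delta(C_{0}(X,A)))$. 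I expect the truncation device to be the only non-routine ingredient; everything else is a direct application of the two lemmas already at hand, together with the compactness of $\Delta(A)$ furnished by unitality.
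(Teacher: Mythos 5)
Your proposal is correct, and its skeleton matches the paper's: both choose the compact set $F\times\Delta(A)$ (with $F=\{x\in X:\ |f(x)|\geq\delta\}$ compact because $f\in C_{0}(X)$, and $\Delta(A)$ compact because $A$ is unital and semisimple), and both finish with Lemma~\ref{l3v}(iii). The difference is the middle step, and there your truncation is not a cosmetic variant but a repair: the paper's own chain asserts, for $x_{i}\otimes\varphi_{i}\notin F\times\Delta(A)$,
\begin{equation*}
\Bigl|\sum_{i=1}^{n}c_{i}f(x_{i})\sigma(\varphi_{i})\Bigr|\leq\epsilon\Bigl\|\sum_{i=1}^{n}c_{i}f(x_{i})\varphi_{i}\Bigr\|_{A^{*}}\leq\epsilon\,\|f\|_{\infty}\Bigl\|\sum_{i=1}^{n}c_{i}(x_{i}\otimes\varphi_{i})\Bigr\|_{C_{0}(X,A)^{*}},
\end{equation*}
and the first of these inequalities is unjustified as written: it is not the BSE inequality for $\sigma$ (which produces the constant $\|\sigma\|_{\textup{BSE}}$, not $\epsilon$), and it cannot come from the BED property of $\sigma$, since for unital $A$ the space $\Delta(A)$ is compact and that property is vacuous. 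Indeed, taking $n=1$, $c_{1}=1$ and any $x_{1}\notin F$ with $f(x_{1})\neq0$, it would force $|\sigma(\varphi_{1})|\leq\epsilon$ for every $\varphi_{1}\in\Delta(A)$, which fails for small $\epsilon$ unless $\sigma$ is tiny. What the argument needs --- a mechanism converting the pointwise smallness $|f(x_{i})|<\delta$ into a factor $\delta$ in front of the dual norm, rather than the global factor $\|f\|_{\infty}$ that Lemma~\ref{l3v}(iii) gives when applied to $f$ itself --- is exactly what your $\tilde f=\delta f/\max(|f|,\delta)$ provides: it lies in $C_{0}(X)$, has $\|\tilde f\|_{\infty}\leq\delta$, and agrees with $f$ at all points in play, so applying the BSE inequality for $\sigma$ and then Lemma~\ref{l3v}(iii) to $\tilde f$ yields the clean constant $\|\sigma\|_{\textup{BSE}}\,\delta=\epsilon$. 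Your preliminary step (membership in $C_{\textup{BSE}}(\Delta(C_{0}(X,A)))$ via Lemma~\ref{lfc0}) is likewise required by the definition of a BED function and is left tacit in the paper. In short: same route and same key lemma, but your write-up supplies the ingredient that actually makes the decay estimate close.
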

\begin{proof}
Assume that $c_{i}\in\mathbb C$,  and $x_{i}\in X$. So for all $\epsilon>0$ there exist some compact set $F\subseteq X$ such that $|f(x)|< \epsilon$, for all $x\notin F$. Set $K:= F\times \Delta(A)$, so for each $x_{i}\otimes \varphi_{i}\notin K$, we have
\begin{align*}
|\sum_{i=1}^{n}c_{i}{\hat f}\otimes \sigma(x_{i}\otimes\varphi_{i})| &= |\sum_{i=1}^{n}c_{i}f(x_{i})\sigma(\varphi_{i})|\\
                                                                                       &\leq\epsilon \|\sum_{i=1}^{n}c_{i}f(x_{i})\varphi_{i}\|_{A^{*}}\\
                                                                                      &\leq\epsilon \|f\|_{\infty}\|\sum_{i=1}^{n}c_{i}x_{i}\otimes\varphi_{i}\|_{{C_{0}(X,A)}^{*}}\\
\end{align*}
Thus ${\hat f}\otimes\sigma\in C_{\textup{BSE}}^{0}(\Delta(C_{0}(X,A))$.
\end{proof}
\begin{thm}\label{tbse}
Let $A$ be a commutative semisimple unital Banach algebra and $X$ be a locally compact Hausdorff topological space. Then  $C_{0}(X, A)$ is a  weak-$\textup{BSE}$ algebra if and only if $A$ is a  weak- $\textup{BSE}$ algebra and $X$ is compact.
\end{thm}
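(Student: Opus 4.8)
The plan is to reduce the statement to the already–established characterization of when $C_{0}(X,A)$ is a $\textup{BSE}$-algebra, combined with the identity $M(C_{0}(X,A))=C_{b}(X,A)$ and Lemma \ref{lcc}. The key structural observation is that, since $A$ is unital, $\widehat{M(A)}=\hat A$, so for $A$ the notions of $\textup{BSE}$- and weak-$\textup{BSE}$ algebra coincide; by contrast $C_{0}(X,A)$ fails to be unital exactly when $X$ is non-compact, and then its multiplier algebra is the genuinely larger space $C_{b}(X,A)$. This asymmetry is precisely what forces the extra hypothesis that $X$ be compact.

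For the converse direction, suppose $A$ is weak-$\textup{BSE}$ and $X$ is compact. Since $A$ is unital, $A$ is then a $\textup{BSE}$-algebra, so by the preceding theorem $C_{0}(X,A)$ is a $\textup{BSE}$-algebra, i.e. $C_{\textup{BSE}}(\Delta(C_{0}(X,A)))=\widehat{M(C_{0}(X,A))}=\widehat{C_{b}(X,A)}$. Compactness of $X$ gives $C_{b}(X,A)=C_{0}(X,A)$ by Lemma \ref{lcc}, whence $\widehat{C_{b}(X,A)}=\widehat{C_{0}(X,A)}$ and therefore $C_{\textup{BSE}}(\Delta(C_{0}(X,A)))=\widehat{C_{0}(X,A)}$; that is, $C_{0}(X,A)$ is weak-$\textup{BSE}$.

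For the forward direction, assume $C_{0}(X,A)$ is weak-$\textup{BSE}$. First I would show $A$ is weak-$\textup{BSE}$: given $\sigma\in C_{\textup{BSE}}(\Delta(A))$, apply Lemma \ref{lfc0} with the constant function $f=1\in C_{b}(X)$ to see that the lift $\bar\sigma(x\otimes\varphi):=\sigma(\varphi)$ lies in $C_{\textup{BSE}}(\Delta(C_{0}(X,A)))$. By hypothesis $\bar\sigma=\hat h$ for some $h\in C_{0}(X,A)$, so $\widehat{h(x)}(\varphi)=\sigma(\varphi)$ for all $x$ and $\varphi$; fixing any $x_{0}\in X$ gives $\sigma=\widehat{h(x_{0})}\in\hat A$. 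Since always $\hat A\subseteq C_{\textup{BSE}}(\Delta(A))$, this yields $C_{\textup{BSE}}(\Delta(A))=\hat A$, i.e. $A$ is weak-$\textup{BSE}$.

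It then remains to force $X$ compact, which I expect to be the main obstacle. Because $A$ is unital and weak-$\textup{BSE}$, it is $\textup{BSE}$, so the preceding theorem again gives $C_{\textup{BSE}}(\Delta(C_{0}(X,A)))=\widehat{C_{b}(X,A)}$. Comparing this with the weak-$\textup{BSE}$ hypothesis $C_{\textup{BSE}}(\Delta(C_{0}(X,A)))=\widehat{C_{0}(X,A)}$ yields $\widehat{C_{b}(X,A)}=\widehat{C_{0}(X,A)}$ as subsets of $C_{b}(\Delta(C_{0}(X,A)))=C_{b}(X\times\Delta(A))$. Using the explicit form $\widehat{L_{g}}(x\otimes\varphi)=\widehat{g(x)}(\varphi)$ for $g\in C_{b}(X,A)$, equality of these transform spaces means that each $g\in C_{b}(X,A)$ agrees in Gelfand transform with some $h\in C_{0}(X,A)$; the semisimplicity of $A$ then upgrades this to $g(x)=h(x)$ for every $x$, so $C_{b}(X,A)=C_{0}(X,A)$ and Lemma \ref{lcc} delivers compactness of $X$. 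The delicate point is exactly this last passage: one must keep the transforms as functions on $X\times\Delta(A)$ rather than on $\beta X\times\Delta(A)$, and invoke semisimplicity of $A$ pointwise in $x$ to recover the functions themselves from their Gelfand transforms.
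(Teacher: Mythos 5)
Your proof is correct, and its skeleton coincides with the paper's: both directions turn on the identification $M(C_{0}(X,A))=C_{b}(X,A)$ (valid because $A$ is unital), the comparison of $\widehat{C_{b}(X,A)}$ with $\widehat{C_{0}(X,A)}$ inside $C_{b}(X\times\Delta(A))$, and Lemma \ref{lcc}; your converse is the paper's converse almost verbatim. You diverge in two places, both to your credit. First, the paper's forward direction only extracts compactness of $X$ --- it never actually verifies that $A$ is weak-$\textup{BSE}$, even though the statement asserts it; you supply exactly this missing half by lifting $\sigma\in C_{\textup{BSE}}(\Delta(A))$ to $\bar\sigma$ via Lemma \ref{lfc0} with $f\equiv 1$ and evaluating at a fixed $x_{0}\in X$. (One could also recover that half only after compactness is known, via the preceding $\textup{BSE}$ theorem, but the paper does not say so.) Second, for compactness the paper does not pass through the full equality $C_{b}(X,A)=C_{0}(X,A)$: it tests the single function $f\otimes 1_{A}$ for $f\in C_{b}(X)$, finds $g\in C_{0}(X,A)$ with the same Gelfand transform, identifies $g=f\otimes 1_{A}$ pointwise (implicitly using semisimplicity of $A$), and reads off $f\in C_{0}(X)$ from the vanishing of $\|g(x)\|_{A}$ at infinity; you instead upgrade the transform equality to $C_{b}(X,A)=C_{0}(X,A)$ for every function and then quote Lemma \ref{lcc}. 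These are the same mechanism, yours being the uniform version, and your closing caution --- that the transforms must be kept on $X\times\Delta(A)$ rather than $\beta X\times\Delta(A)$, with semisimplicity of $A$ applied pointwise in $x$ --- is precisely the step the paper glosses over when it writes $C_{b}(X,A)=C_{0}(X,A)$ directly from equality of the hat-spaces.
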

\begin{proof}
Assume that $C_{0}(X, A)$ is a  weak-$\textup{BSE}$ algebra. Then 
$$
C_{\textup{BSE}}(\Delta(C_{0}(X, A)) = C_{0}(X, A\hat{)}\subseteq C_{b}(X, A\hat{)}
$$
Clearly, by an argument, as is used in \ref{l1cbse} we have
$$
C_{b}(X, A\hat{)}\subseteq  C_{\textup{BSE}}(\Delta(C_{0}(X, A))
$$

Which implies that 
$$
C_{b}(X, A\hat{)}= C_{0}(X, A\hat{)}
$$
If $f\in C_{b}(X)$, then 
$$
f\otimes 1_{A}\in {C_{b}}(X, A)= C_{0}(X, A)
$$
and so there exists some $g\in C_{0}(X, A)$ where 
$$ 
g(x)= f(x). 1_{A} ~~ (x\in X)
$$
Also, for all $\epsilon> 0$, there exists some compact set $K\subseteq X$ such that
$$
|f(x)|.\|1_{A}\|= \|f(x)1_{A}\| = \|g(x)\|_{A} < \epsilon\|1_{A}\|. 
$$
for all $x\in X\backslash K$.
Consequently, $|f(x)|< \epsilon$, for all $x\in X\backslash K$, so $f\in C_{0}(X)$. Thus $C_{b}(X)= C_{0}(X)$ and so $X$ is compact. Conversely, if $A$ is a   weak-$\textup{BSE}$ algebra, then $ C_{\textup{BSE}}(\Delta(A))= \hat A$.
So by Lemma \ref{l1cbse} $C_{0}(X, A)$ is a $\textup{BSE}$-  algebra. Also, $X$ is compact, so by applying Lemma \ref{lcc} we have
\begin{align*}
C_{\textup{BSE}}(\Delta(C_{0}(X, A))  &= C_{b}(X, A\hat{)}\\
                                                               &= C_{0}(X, A\hat{)}
\end{align*}
Therefore $C_{0}(X, A)$ is a  weak-$\textup{BSE}$ algebra.
\end{proof}
\begin{lem}\label{labed}
Let $A$ be a commutative semisimple unital Banach algebra and $X$ be a locally compact Hausdorff topological space. If  $C_{0}(X, A)$ is a  $\textup{BED}$- algebra, then $A$ is a  $\textup{BED}$- algebra.
\end{lem}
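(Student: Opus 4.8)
The goal is to establish the equality $C_{\textup{BSE}}^{0}(\Delta(A)) = \hat{A}$, which is exactly the statement that $A$ is a $\textup{BED}$-algebra; I would prove the two inclusions separately. Throughout I identify $\Delta(C_{0}(X,A))$ with $X \times \Delta(A)$ through the characters $x \otimes \varphi$, and I fix a point $x_{0} \in X$ together with a function $f \in C_{0}(X)$ satisfying $f(x_{0}) = 1$, which exists by Urysohn's lemma because $X$ is locally compact Hausdorff.

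The inclusion $\hat{A} \subseteq C_{\textup{BSE}}^{0}(\Delta(A))$ costs almost nothing. Since every $a \in A$ satisfies $\|\hat{a}\|_{\textup{BSE}} \le \|a\|_{A} < \infty$, we always have $\hat{A} \subseteq C_{\textup{BSE}}(\Delta(A))$; and since $A$ is unital, $\Delta(A)$ is compact, so choosing $K = \Delta(A)$ in the definition of a $\textup{BED}$-function makes the defining estimate vacuous, whence $C_{\textup{BSE}}^{0}(\Delta(A)) = C_{\textup{BSE}}(\Delta(A))$ and the inclusion follows. If one prefers an argument that invokes the hypothesis directly, note that $f \otimes a \in C_{0}(X,A)$, so $\widehat{f \otimes a} = \hat{f} \otimes \hat{a}$ lies in $C_{\textup{BSE}}^{0}(\Delta(C_{0}(X,A)))$; restricting its $\textup{BED}$-estimate to characters $x_{0} \otimes \varphi$ with $\varphi$ outside the (compact) projection to $\Delta(A)$ of the associated compact set, and then invoking Lemma \ref{l3v}(ii), shows that $\hat{a}$ is a $\textup{BED}$-function.

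The real content is the reverse inclusion $C_{\textup{BSE}}^{0}(\Delta(A)) \subseteq \hat{A}$, and here the hypothesis that $C_{0}(X,A)$ is a $\textup{BED}$-algebra is essential. Given $\sigma \in C_{\textup{BSE}}^{0}(\Delta(A))$, I would apply the earlier lemma asserting that $f \in C_{0}(X)$ and $\sigma \in C_{\textup{BSE}}^{0}(\Delta(A))$ force $\hat{f} \otimes \sigma \in C_{\textup{BSE}}^{0}(\Delta(C_{0}(X,A)))$. Because $C_{0}(X,A)$ is a $\textup{BED}$-algebra, $C_{\textup{BSE}}^{0}(\Delta(C_{0}(X,A))) = \widehat{C_{0}(X,A)}$, so there is some $g \in C_{0}(X,A)$ with $\hat{f} \otimes \sigma = \hat{g}$. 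Evaluating both sides at an arbitrary $x \otimes \varphi$ gives $f(x)\,\sigma(\varphi) = \varphi(g(x))$ for all $x \in X$ and $\varphi \in \Delta(A)$; specializing to $x = x_{0}$, where $f(x_{0}) = 1$, yields $\sigma(\varphi) = \varphi(g(x_{0})) = \widehat{g(x_{0})}(\varphi)$ for every $\varphi \in \Delta(A)$, so $\sigma = \widehat{g(x_{0})} \in \hat{A}$.

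Together the two inclusions give $C_{\textup{BSE}}^{0}(\Delta(A)) = \hat{A}$, so $A$ is a $\textup{BED}$-algebra. I expect the only delicate points to be organizational rather than conceptual: confirming that $\hat{f} \otimes \sigma$ is produced as a genuine $\textup{BED}$-function on the product character space (which the cited lemma supplies) and checking that the extracted element $g(x_{0}) \in A$ reproduces $\sigma$ on all of $\Delta(A)$. The unitality of $A$ is precisely what renders the first inclusion automatic and lets the proof concentrate on the second.
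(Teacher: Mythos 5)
Your proof is correct. For the substantive inclusion $C_{\textup{BSE}}^{0}(\Delta(A)) \subseteq \hat{A}$ you follow exactly the paper's route: tensor $\sigma$ with a Urysohn function $f$ (via the unnumbered lemma following Lemma \ref{lfc0}, which handles $f \in C_{0}(X)$ and $\sigma \in C_{\textup{BSE}}^{0}(\Delta(A))$), use the hypothesis $C_{\textup{BSE}}^{0}(\Delta(C_{0}(X,A))) = \widehat{C_{0}(X,A)}$ to produce $g \in C_{0}(X,A)$ with $\hat{f} \otimes \sigma = \hat{g}$, and evaluate at $x_{0} \otimes \varphi$. Where you genuinely diverge is the reverse inclusion $\hat{A} \subseteq C_{\textup{BSE}}^{0}(\Delta(A))$: you note that unitality forces $\Delta(A)$ to be compact, so taking $K = \Delta(A)$ makes the $\textup{BED}$-estimate vacuous, giving $C_{\textup{BSE}}^{0}(\Delta(A)) = C_{\textup{BSE}}(\Delta(A))$, and then the standard bound $\|\hat{a}\|_{\textup{BSE}} \le \|a\|_{A}$ finishes the job --- with no use of the hypothesis on $C_{0}(X,A)$ at all. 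The paper instead reruns the tensor machinery: it places $\hat{f} \otimes \hat{a}$ in $C_{\textup{BSE}}^{0}(\Delta(C_{0}(X,A)))$ using the hypothesis, projects the resulting compact set $K \subseteq X \times \Delta(A)$ to $\Delta(A)$ via $S(x \otimes \varphi) = \varphi$, and transfers the estimate back by Lemma \ref{l3v}(ii). Your shortcut is cleaner and isolates the true content of the lemma in one inclusion; it also makes visible that the easy half holds for every unital commutative semisimple Banach algebra, a fact the paper itself uses elsewhere (e.g.\ in Theorem \ref{tcbe}, where unitality yields the analogous inclusion). The paper's longer argument, on the other hand, does not lean on compactness of $\Delta(A)$ and so suggests how one would argue in a non-unital setting. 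One bookkeeping remark: the lemma your hard direction needs is the $\textup{BED}$-version (unnumbered, stated for unital $A$), not Lemma \ref{lfc0} itself, which is the $\textup{BSE}$-version; the paper's own proof miscites it the same way, so be explicit about which statement you invoke.
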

\begin{proof}
If $x_{0}\in X$, $f\in C_{0}(X)$ such that $f(x_{0})=1$ and $\sigma\in  C_{\textup{BSE}}^{0}(\Delta(A))$, then according to Lemma \ref{lfc0}, $\hat{f}\otimes \sigma\in C_{\textup{BSE}}^{0}(\Delta(C_{0}(X, A))= C_{0}(X, A\hat)$. Thus
there exists some $g\in C_{0}(X, A)$ where $\hat{f}\otimes \sigma= \hat g$. This implies that
\begin{align*}
\sigma(\varphi) &= f(x_{0})\sigma(\varphi)\\
                           &= \hat{g}(x_{0}\otimes\varphi)\\
                           &= \widehat{g(x_{0})}(\varphi)
\end{align*}
Thus $\sigma= \widehat{g(x_{0})}\in \hat A$. Therefore 
$$
C_{\textup{BSE}}^{0}(\Delta(A))\subseteq \hat A
$$
Conversely, if $a\in A$, $x_{0}\in X$ and $f\in C_{0}(X)$ where $f(x_{0})=1$, then $\hat{f}\otimes \hat{a}\in  C_{\textup{BSE}}^{0}(\Delta(C_{0}(X, A))$. So  
for all $\epsilon> 0$, there exists some compact set $K\subseteq X\otimes\Delta(A)$ where  the following inequality is holds:
$$
 |\sum_{i=1}^{n} c_{i}\hat{f}\otimes \hat{a}(x_{i}\otimes\varphi_{i})|\leq \epsilon\|\sum_{i=1}^{n} c_{i}x_{i}\otimes\varphi_{i}\|_{{C_{0}(X, A)}^{*}}
$$
where $x_{i}\otimes\varphi_{i}\notin K$.  If 
\begin{align*}
S &: ~ X\times \Delta(A)\to \Delta(A)\\
   & x\otimes \varphi\mapsto \varphi
\end{align*}
Then $S$ is continuous and $F:= S(K)$ is compact. If $\varphi_{i}\notin F$, then $x_{0}\otimes \varphi_{i}\notin K$. This implies that 
\begin{align*}
|\sum_{i=1}^{n} c_{i}\varphi_{i}(a)| &= |\sum_{i=1}^{n} c_{i} \hat{f}\otimes \hat{a}(x_{0}\otimes\varphi_{i})|\\
                                                              &\leq \epsilon\|\sum_{i=1}^{n} c_{i}x_{0}\otimes\varphi_{i}\|_{{C_{0}(X, A)}^{*}}\\
                                                              &\leq \epsilon\|\sum_{i=1}^{n} c_{i}\varphi_{i}\|_{ A^{*}}
\end{align*}
Thus $\hat a\in C_{\textup{BSE}}^{0}(\Delta(A))$ and so $\hat A\subseteq C_{\textup{BSE}}^{0}(\Delta(A))$. Therefore 
$$
C_{\textup{BSE}}^{0}(\Delta(A)) = \hat A
$$
and $A$ is a $\textup{BED}$- algebra.
\end{proof}
\begin{thm}\label{tcbe}
Let $X$ be a locally compact Hausdorff topological space and $A$ be a commutative unital Banach algebra. Then $C_{0}(X, A)$ is a $\textup{BED}$- algebra if and only $A$ is a $\textup{BED}$- algebra.
\end{thm}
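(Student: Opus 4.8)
The plan is to treat the two implications separately. The forward implication, that $C_{0}(X,A)$ being a $\textup{BED}$-algebra forces $A$ to be one, is already established in Lemma \ref{labed}, so the entire content lies in the converse. Assuming $A$ is a $\textup{BED}$-algebra, I would prove that $C_{0}(X,A)$ is a $\textup{BED}$-algebra by verifying the two inclusions that together give $C_{\textup{BSE}}^{0}(\Delta(C_{0}(X,A))) = \widehat{C_{0}(X,A)}$.

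For the inclusion $\widehat{C_{0}(X,A)} \subseteq C_{\textup{BSE}}^{0}(\Delta(C_{0}(X,A)))$, I would argue by density. Every elementary tensor $g \otimes a$ with $g \in C_{0}(X)$ and $a \in A$ satisfies $\widehat{g \otimes a} = \widehat{g} \otimes \widehat{a}$, and since $A$ is a $\textup{BED}$-algebra we have $\widehat{a} \in \widehat{A} = C_{\textup{BSE}}^{0}(\Delta(A))$; the lemma on $\widehat{f} \otimes \sigma$ for $f \in C_{0}(X)$ and $\sigma \in C_{\textup{BSE}}^{0}(\Delta(A))$ then places $\widehat{g} \otimes \widehat{a}$ in $C_{\textup{BSE}}^{0}(\Delta(C_{0}(X,A)))$. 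Thus the Gelfand image of the dense subspace $C_{0}(X) \otimes A$ of $C_{0}(X,A)$ lands inside $C_{\textup{BSE}}^{0}$. Because the Gelfand map is contractive into the $\textup{BSE}$-norm, that is $\|\widehat{f}\|_{\textup{BSE}} \le \|f\|_{\infty,A}$, and because $C_{\textup{BSE}}^{0}(\Delta(C_{0}(X,A)))$ is a closed ideal of $C_{\textup{BSE}}(\Delta(C_{0}(X,A)))$, any approximation $f_{m} \to f$ in $\|\cdot\|_{\infty,A}$ with $f_{m} \in C_{0}(X) \otimes A$ yields $\widehat{f_{m}} \to \widehat{f}$ in $\textup{BSE}$-norm and hence $\widehat{f} \in C_{\textup{BSE}}^{0}$ for every $f \in C_{0}(X,A)$.

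The reverse inclusion $C_{\textup{BSE}}^{0}(\Delta(C_{0}(X,A))) \subseteq \widehat{C_{0}(X,A)}$ is the crux, and I would route it through the $\textup{BSE}$ theory already in place. The key preliminary observation is that, $A$ being unital, $\Delta(A)$ is compact, so the defining condition of a $\textup{BED}$-function on $\Delta(A)$ is vacuous (take $K = \Delta(A)$) and $C_{\textup{BSE}}^{0}(\Delta(A)) = C_{\textup{BSE}}(\Delta(A))$; hence $A$ being $\textup{BED}$ forces $C_{\textup{BSE}}(\Delta(A)) = \widehat{A} = \widehat{M(A)}$, i.e.\ $A$ is a $\textup{BSE}$-algebra. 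By Lemma \ref{l1cbse} the algebra $C_{0}(X,A)$ is then a $\textup{BSE}$-algebra, so $C_{\textup{BSE}}(\Delta(C_{0}(X,A))) = \widehat{M(C_{0}(X,A))} = \widehat{C_{b}(X,A)}$, using $M(C_{0}(X,A)) = C_{b}(X,A)$. Consequently any $\sigma \in C_{\textup{BSE}}^{0}(\Delta(C_{0}(X,A))) \subseteq C_{\textup{BSE}}(\Delta(C_{0}(X,A)))$ can be written $\sigma = \widehat{f}$ with $f \in C_{b}(X,A)$. Now Proposition \ref{pfc} applies verbatim: since $f \in C_{b}(X,A)$ and $\widehat{f} = \sigma \in C_{\textup{BSE}}^{0}(\Delta(C_{0}(X,A)))$, we conclude $f \in C_{0}(X,A)$, whence $\sigma = \widehat{f} \in \widehat{C_{0}(X,A)}$.

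I expect the main obstacle to be this reverse inclusion, and specifically the recognition that unitality of $A$ collapses the $\textup{BED}$ and $\textup{BSE}$ conditions on $A$ through compactness of $\Delta(A)$; this is precisely what lets me import the already-proven $\textup{BSE}$ transfer (Lemma \ref{l1cbse}) and then invoke Proposition \ref{pfc} as the mechanism upgrading a merely bounded representing function $f \in C_{b}(X,A)$ to one that vanishes at infinity. The density half is routine once one records that the Gelfand map contracts the $\textup{BSE}$-norm and that $C_{\textup{BSE}}^{0}$ is closed.
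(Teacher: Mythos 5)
Your proposal is correct and follows essentially the same route as the paper: the forward direction is delegated to Lemma \ref{labed}, and for the converse the paper likewise observes that unitality of $A$ turns the $\textup{BED}$ hypothesis into the $\textup{BSE}$ property, invokes Lemma \ref{l1cbse} together with $M(C_{0}(X,A))=C_{b}(X,A)$ to write any $\sigma\in C_{\textup{BSE}}^{0}(\Delta(C_{0}(X,A)))$ as $\hat{g}$ with $g\in C_{b}(X,A)$, and then applies Proposition \ref{pfc} to get $g\in C_{0}(X,A)$. The only divergence is the inclusion $\widehat{C_{0}(X,A)}\subseteq C_{\textup{BSE}}^{0}(\Delta(C_{0}(X,A)))$, which the paper dispatches in one line by citing Tauberianness of the unital algebra, whereas you give a self-contained density argument (elementary tensors plus the $\hat{f}\otimes\sigma$ lemma, contractivity of the Gelfand map in $\textup{BSE}$-norm, and closedness of $C_{\textup{BSE}}^{0}$) — both are valid, and yours makes explicit a step the paper leaves implicit.
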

\begin{proof}
If $A$ is a $\textup{BED}$- algebra, Since $A$ is unital, so $A$ is a $\textup{BSE}$- algebra. Thus by applying Lemma \ref{l1cbse}, $C_{0}(X, A)$ is a $\textup{BSE}$- algebra.  As a result
$$
C_{b}(X, A\widehat) = M(C_{0}(X, A)\widehat)= C_{\textup{BSE}}(\Delta(C_{0}(X, A))
$$ 
In another hand, $A$ is unital, so $A$ is Tauberian, then
$$
C_{0}(X, A\widehat)\subseteq C_{\textup{BSE}}^{0}(\Delta(C_{0}(X, A))
$$
If $\sigma\in  C_{\textup{BSE}}^{0}(\Delta(C_{0}(X, A))$, then there exist $g\in C_{b}(X, A)$ where $\sigma = \hat g$. Thus by applying Lemma\ref{pfc},  $g\in C_{0}(X, A)$. As a result 
$$
C_{0}(X, A\widehat)\subseteq C_{\textup{BSE}}^{0}(\Delta(C_{0}(X, A))
$$
Therefore $C_{0}(X, A)$  is a $\textup{BED}$- algebra.

  The opposite proof is based on Lemma \ref{labed}.
\end{proof}
\begin{lem}\label{lcwbe}
Let $A$ be a commutative semisimple unital Banach algebra and $X$ be a locally compact Hausdorff topological space. If  $C_{0}(X, A)$ is a  weak- $\textup{BED}$ algebra, then $X$ is compact and $A$  is a  weak- $\textup{BED}$ algebra.
\end{lem}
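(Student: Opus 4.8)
The plan is to exploit the unital multiplier description $M(C_{0}(X,A))=C_{b}(X,A)$ together with the identification $\Delta(C_{0}(X,A))=X\times\Delta(A)$, and then to read the two conclusions off the $\textup{BED}$ condition evaluated at single characters. First I would record that, since $A$ is unital, a multiplier $g\in C_{b}(X,A)$ acts through $\widehat{g}(x\otimes\varphi)=\varphi(g(x))$, so that $\widehat{M(C_{0}(X,A))}=\widehat{C_{b}(X,A)}$. Hence the weak-$\textup{BED}$ hypothesis reads
\[
C_{\textup{BSE}}^{0}(\Delta(C_{0}(X,A)))=\widehat{C_{b}(X,A)}.
\]

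To prove that $X$ is compact, I would fix $f\in C_{b}(X)$ and consider the multiplier $f\otimes 1_{A}\in C_{b}(X,A)$, whose Gelfand transform is $(x\otimes\varphi)\mapsto f(x)$. By the displayed identity this is a $\textup{BED}$- function, so for every $\epsilon>0$ there is a compact $K\subseteq X\times\Delta(A)$ such that the $\textup{BED}$ inequality holds off $K$. Applying it to the single character $x\otimes\varphi_{0}$, for which $\|x\otimes\varphi_{0}\|\leq 1$, yields $|f(x)|\leq\epsilon$ whenever $x\otimes\varphi_{0}\notin K$. The slice $K_{\varphi_{0}}:=\{x\in X:(x,\varphi_{0})\in K\}$ is the image under the projection $\pi_{X}$ of the closed subset $K\cap(X\times\{\varphi_{0}\})$ of the compact set $K$, hence compact; thus $|f(x)|\leq\epsilon$ off the compact set $K_{\varphi_{0}}$. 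Therefore $f\in C_{0}(X)$, so $C_{b}(X)=C_{0}(X)$ and $X$ is compact.

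Finally I would deduce that $A$ is weak-$\textup{BED}$. Since $X$ is now compact, Lemma \ref{lcc} gives $C_{b}(X,A)=C_{0}(X,A)$, hence $M(C_{0}(X,A))=C_{0}(X,A)$ and the weak-$\textup{BED}$ identity collapses to $C_{\textup{BSE}}^{0}(\Delta(C_{0}(X,A)))=\widehat{C_{0}(X,A)}$; that is, $C_{0}(X,A)$ is a $\textup{BED}$- algebra. By Theorem \ref{tcbe} this forces $A$ to be a $\textup{BED}$- algebra, and since $A$ is unital we have $\widehat{M(A)}=\widehat{A}$, so for $A$ the notions $\textup{BED}$ and weak-$\textup{BED}$ coincide; hence $A$ is weak-$\textup{BED}$.

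The main obstacle is the compactness step: one must justify that the $\textup{BED}$ inequality may be specialized to a single character and that the resulting slice $K_{\varphi_{0}}$ is genuinely compact, so that a uniform bound $|f|\leq\epsilon$ off a compact set really yields $f\in C_{0}(X)$. Once $X$ is compact the remainder is immediate, because for unital $A$ being $\textup{BED}$ is the same as being weak-$\textup{BED}$, and the equivalence then reduces to the already established Theorem \ref{tcbe}.
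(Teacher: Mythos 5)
Your proposal is correct and follows essentially the same route as the paper: both test the weak-$\textup{BED}$ identity on $f\otimes 1_{A}$ for $f\in C_{b}(X)$, use the $\textup{BED}$ inequality at single characters together with compactness of (a projection or slice of) the exceptional set $K$ to conclude $f\in C_{0}(X)$ and hence $X$ compact, and then invoke Theorem \ref{tcbe} plus unitality of $A$ to pass from $\textup{BED}$ to weak-$\textup{BED}$. The only cosmetic difference is that the paper projects all of $K$ onto $X$ via $\Theta$, whereas you slice $K$ at a fixed character $\varphi_{0}$; both yield the required compact set in $X$.
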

\begin{proof}
Since $C_{0}(X, A)$ is unital, so by applying \ref{l1},  it has $\Delta$- weak approximate identity. Then $C_{0}(X, A)$ is a $\textup{BED}$- algebra and 
$$
C_{\textup{BSE}}(\Delta(C_{0}(X, A))= C_{0}(X, A \widehat)
$$
Also according to the assumption:
$$
C_{\textup{BSE}}^{0}(\Delta(C_{0}(X, A))={M(C_{0}(X, A)\widehat)}= {C_{b}(X, A\widehat)}
$$
Thus for all $f\in C_{b}(X)$, the function $g= f\otimes 1_{A}$ is in $C_{b}(X, A)$.  This implies that  for all $\epsilon> 0$, there exists some compact set $K\subseteq X\otimes \Delta(A)$ where for all $c_{i}\in\mathbb C$ and $\psi_{i}\in X\otimes\Delta(A)\backslash K$, the following inequality
$$
 |\sum_{i=1}^{n} c_{i}\hat{g}(\psi_{i})|\leq \epsilon\|\sum_{i=1}^{n} c_{i}\psi_{i}\|_{{C_{0}(X,A)}^{*}}
$$
holds. Define
$$
\Theta: X\otimes\Delta(A)\to X
$$
where $x\otimes\varphi\mapsto x$, then $\Theta$ is continuouse. Set $F:=\Theta(K)$, so $F$ is compact, thus for all $x\in X\backslash F$ and $\varphi\in\Delta(A)$, $x\otimes\varphi\notin K$, so
\begin{align*}
|\hat{g}(x\otimes\varphi)| &= |f(x)\varphi(1_{A})|= |f(x)|\\
                                           & <\epsilon\|x\otimes\varphi\|_{{C_{0}(X,A)}^{*}} <\epsilon
\end{align*}
Then $f\in C_{0}(X)$ and 
$$
  C_{0}(X)=  C_{b}(X)
$$
Therefore $X$ is compact and 
$$
C_{\textup{BSE}}(\Delta(C_{0}(X, A))= C_{0}(X, A\widehat)
$$
Consequently $C_{0}(X, A)$ is a  $\textup{BED}$- algebra and by applying the Theorem \ref{tcbe} $A$ is a  $\textup{BED}$- algebra. Also, $A$ is unital so $A$ is a weak- $\textup{BED}$ algebra. 

\end{proof}
\begin{thm}
Let $A$ be a commutative semisimple unital Banach algebra and $X$ be a locally compact Hausdorff topological space. Then  $C_{0}(X, A)$ is a  weak-$\textup{BED}$ algebra if and only if $A$ is a  weak- $\textup{BED}$ algebra and $X$ is compact.
\end{thm}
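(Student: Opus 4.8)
The plan is to observe that the forward implication has already been established: Lemma~\ref{lcwbe} states precisely that if $C_0(X,A)$ is weak-$\textup{BED}$, then $X$ is compact and $A$ is weak-$\textup{BED}$. Hence only the converse requires an argument, and I would prove it by reducing the weak-$\textup{BED}$ condition for $C_0(X,A)$ to the (ordinary) $\textup{BED}$ condition, which is governed by Theorem~\ref{tcbe}.

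First I would exploit the unitality of $A$. Since $A$ is unital, $\widehat{M(A)} = \hat A$, so the defining identity of a weak-$\textup{BED}$ algebra, $C_{\textup{BSE}}^{0}(\Delta(A)) = \widehat{M(A)}$, collapses to $C_{\textup{BSE}}^{0}(\Delta(A)) = \hat A$. This is exactly the defining identity of a $\textup{BED}$ algebra. Thus for unital $A$ the notions of weak-$\textup{BED}$ and $\textup{BED}$ coincide, and the hypothesis that $A$ is weak-$\textup{BED}$ is equivalent to $A$ being $\textup{BED}$.

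Next I would invoke Theorem~\ref{tcbe}: since $A$ is $\textup{BED}$, the algebra $C_0(X,A)$ is $\textup{BED}$, that is, $C_{\textup{BSE}}^{0}(\Delta(C_0(X,A))) = \widehat{C_0(X,A)}$. It then remains to replace $\widehat{C_0(X,A)}$ by $\widehat{M(C_0(X,A))}$, and here the compactness of $X$ enters. By Lemma~\ref{lcc} one has $C_b(X,A) = C_0(X,A)$, and since $A$ is unital the multiplier description $M(C_0(X,A)) = C_b(X,A)$ proved earlier gives $M(C_0(X,A)) = C_0(X,A)$, whence $\widehat{M(C_0(X,A))} = \widehat{C_0(X,A)}$. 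Equivalently, when $X$ is compact and $A$ is unital the algebra $C_0(X,A) = C(X,A)$ is itself unital and so coincides with its own multiplier algebra. Combining these identities yields $C_{\textup{BSE}}^{0}(\Delta(C_0(X,A))) = \widehat{M(C_0(X,A))}$, which is the definition of $C_0(X,A)$ being weak-$\textup{BED}$. I do not expect a serious obstacle in this direction; the only point needing care is the identification of $\widehat{M(C_0(X,A))}$ with $\widehat{C_0(X,A)}$, which rests on combining the multiplier description with the compactness of $X$, both already available in the excerpt.
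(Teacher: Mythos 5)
Your proposal is correct and follows essentially the same route as the paper's own proof: the forward direction is delegated to Lemma~\ref{lcwbe}, and for the converse both arguments use unitality to identify weak-$\textup{BED}$ with $\textup{BED}$ for $A$, apply Theorem~\ref{tcbe} to get that $C_{0}(X,A)$ is $\textup{BED}$, and then use compactness of $X$ (via Lemma~\ref{lcc}) together with the multiplier identification $M(C_{0}(X,A))=C_{b}(X,A)$ to conclude $C_{\textup{BSE}}^{0}(\Delta(C_{0}(X,A)))=\widehat{M(C_{0}(X,A))}$.
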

\begin{proof}
Assume that $C_{0}(X, A)$ is a  weak-$\textup{BED}$ algebra. Then by applying lemma \ref{lcwbe}, $A$ is a  weak- $\textup{BED}$ algebra, and $X$ is compact.

   Conversely, let $A$ be a  weak- $\textup{BED}$ algebra and $X$ be a compact space. Since $A$ is unital and semisimple, so 
$$
C_{\textup{BSE}}^{0}(\Delta(A))= \widehat{M(A)}= \hat A
$$
Thus $A$ is a $\textup{BED}$- algebra and  by applying the Theorem \ref{tcbe} $C_{0}(X, A)$ is a $\textup{BED}$- algebra. Also, $X$ is compact, so the following is the yield:
\begin{align*}
C_{\textup{BSE}}^{0}(\Delta(C_{0}(X, A)) &= C_{0}(X, A\widehat)\\
                                                                       &= C_{b}(X, A \widehat)\\
                                                                       & =M(C_{0}(X, A)\widehat)
\end{align*}
Therefore $C_{0}(X, A)$ is a  weak-$\textup{BED}$ algebra.
  
\end{proof}



\end{document}